\documentclass{amsart}
\usepackage{amsfonts,amssymb,amsmath,amsgen,amsthm}
\usepackage{hyperref}
\usepackage{color}
\usepackage{mathtools}
\newcommand{\msc}[2][2000]{%
  \let\@oldtitle\@title%
  \gdef\@title{\@oldtitle\footnotetext{#1 \emph{Mathematics subject
        classification.} #2}}%
}

\theoremstyle{plain}
\newtheorem{theorem}{Theorem} [section]
\newtheorem{definition}[theorem]{Definition}

\newtheorem{lemma}[theorem]{Lemma}
\newtheorem{corollary}[theorem]{Corollary}
\newtheorem{proposition}[theorem]{Proposition}

\theoremstyle{remark}
\newtheorem{remark}[theorem]{Remark}

\def\C{{\mathbb C}}
\def\R{{\mathbb R}}
\def\N{{\mathbb N}}
\def\Z{{\mathbb Z}}
\def\O{\mathcal O}
\def\F{\mathcal F}

\def\Id{\mathbf 1}

\def\({\left(}
\def\){\right)}
\def\<{\left\langle}
\def\>{\right\rangle}
\def\le{\leqslant}
\def\ge{\geqslant}

\def\Eq#1#2{\mathop{\sim}\limits_{#1\rightarrow#2}}
\def\Tend#1#2{\mathop{\longrightarrow}\limits_{#1\rightarrow#2}}

\def\d{{\partial}}
\def\eps{\varepsilon}

\def\si{{\sigma}}

\allowdisplaybreaks
\numberwithin{equation}{section}




\begin{document}
\title[Global splitting for NLS]
{Scattering and uniform in time error estimates  for splitting method in NLS}

\author[R. Carles]{R\'emi Carles}
\author[C. Su]{Chunmei Su}

\address{Univ Rennes, CNRS\\ IRMAR - UMR 6625\\ F-35000
  Rennes, France}
\email{Remi.Carles@math.cnrs.fr}

\address{Yau Mathematical Sciences Center\\Tsinghua University\\10084 Beijing, China}
\email{sucm@tsinghua.edu.cn}

\begin{abstract}
  We consider the nonlinear Schr\"odinger equation with a defocusing
  nonlinearity which is mass-(super)critical and
  energy-subcritical. We prove uniform in time error estimates for the
 Lie-Trotter  time splitting discretization. This uniformity in time
 is obtained thanks to a vectorfield which provides time decay
 estimates for the exact and numerical solutions. This vectorfield is
 classical in scattering theory, and requires several technical
 modifications compared to previous error estimates for splitting
 methods.
\end{abstract}
\subjclass[2010]{Primary: 35Q55, 65M12. Secondary: 35B40, 35B45, 35P25, 65M15}
\keywords{Nonlinear Schr\"odinger equation; split-step method; scattering theory; Strichartz estimates; error analysis}
\thanks{RC was supported by Rennes M\'etropole through its AIS
  program, and by Centre Henri Lebesgue, program ANR-11-LABX-0020-0.}

\maketitle


\section{Introduction}\label{sec:intro}

We consider large time error estimates for the Lie-Trotter time
splitting method associated to the defocusing nonlinear Schr\"odinger equation
    \begin{equation}\label{eq:NLS}
       i \partial_t u +\frac{1}{2} \Delta u =  |u|^{2\si} u, \quad
       (t,x) \in \R\times \R^d, \quad
    u_{\mid t=0}= \phi,
  \end{equation}
  in space dimension $d\le 5$, in the case where the nonlinearity is
  mass-(super)critical and
  energy-subcritical,
  \begin{equation}\label{eq:sigma}
     \frac{2}{d}\le\sigma<\frac{2}{(d-2)_+},
  \end{equation}
that is, $\si\ge 2/d$ when $d\le 2$, and $2/d\le \si<2/(d-2)$ when
$d\ge 3$.
The restriction on the space dimension is due to the fact that we want
the nonlinearity to be energy-subcritical, and to have  two
continuous derivatives, $\si>1/2$. Under these
assumptions, the Cauchy problem \eqref{eq:NLS} is globally well-posed
in $H^1(\R^d)$ (\cite{CazCourant, GV79Cauchy}), and
mass and energy are conserved,
\begin{align*}
  & \text{Mass: }M(u(t)):=\|u(t)\|_{L^2(\R^d)}^2 = M(\phi),\\
  &\text{Energy: } E(u(t)):=\frac{1}{2}\|\nabla
    u(t)\|_{L^2(\R^d)}^2+\frac{1}{\si+1}\|u(t)\|_{L^{2\si+2}(\R^d)}^{2\si+2} =E(\phi).
\end{align*}
Like initially in \cite{GV79Scatt}, denote by $\Sigma\subset
H^1(\R^d)$ the Hilbert space, sometimes called \emph{conformal space}
in the context of nonlinear Schr\"odinger equations,
\begin{equation*}
  \Sigma:=\left\{\phi\in H^1(\R^d);\ \int_{\R^d}|x|^2|\phi(x)|^2dx<\infty\right\},
\end{equation*}
equipped with the norm
\begin{equation*}
    \|\phi\|_{\Sigma}^2:=
    \|\phi\|_{L^2(\R^d)}^2 + \|\nabla \phi\|_{L^2(\R^d)}^2+ \|x
    \phi\|_{L^2(\R^d)}^2.
\end{equation*}
Then the Cauchy problem \eqref{eq:NLS} is globally well-posed in
$\Sigma$ as well: if $\phi\in \Sigma$, the solution $u(t,\cdot)$ has a finite momentum in $L^2(\R^d)$
for all time, and the evolution of this quantity is described by the
\emph{pseudo-conformal evolution law}, recalled in
Proposition~\ref{prop:pseudoconf}.
\smallbreak

We now recall the definition of the Lie-Trotter time splitting for
\eqref{eq:NLS}.
  We define $N(t) \phi$ as the solution of the flow
    \begin{equation*}
       i\partial_t u  = |u|^{2\si} u,\quad
    u_{\mid t=0}= \phi,
    \end{equation*}
that is, $N(t) \phi = \phi e^{-i t |\phi|^{2\si}}$. We set $S(t) \phi$ as the solution of the linear Schr\"odinger flow
\begin{equation*}
    i \partial_t u +\frac{1}{2} \Delta u =  0, \quad
    u_{\mid t=0}= \phi.
\end{equation*}
It is a Fourier multiplier, $S(t) \phi =
e^{i\frac{t}{2}\Delta} \phi$, and thus $S(t)$ is unitary on
$H^s(\R^d)$ for any $s\in \R$. The Lie-Trotter approximation is
defined, for $\tau\in (0,1)$,  as
\[Z(n \tau) \phi = \( S(\tau) N(\tau) \)^n \phi.\]
Error estimates for this time discretization were established first in
\cite{BBD} for globally Lipschitz nonlinearities. C.~Lubich
\cite{Lu08} proved error estimates in the case of the Strang
splitting, allowing (Schr\"odinger-Poisson nonlinearity and) cubic
nonlinearity ($\si=1$ in \eqref{eq:NLS}), hence a nonlinearity which
is not globally Lipschitz continuous.
\smallbreak

As pointed out  in \cite{Ignat2011,IgnatZuazua2006,IgnatZuazua2009,StKe05},  $S(\cdot)$ does not satisfy
discrete in time Strichartz estimates, which makes it difficult to
envisage error estimates involving a rather low regularity (in space)
of the initial datum $\phi$; see also \cite{FaouGrebert2011,ORS21} for
discussions leading to the same conclusion, that $S(\cdot)$ should be
modified in order to get better convergence results.
Following \cite{Ignat2011,IgnatZuazua2009} and the adaptation in
\cite{ChoiKoh2021}, we consider the modified splitting operator:
    \begin{equation}\label{eq:modified-splitting}
    Z_{\tau} (n\tau) = \( S_{\tau} (\tau) N(\tau) \)^n \Pi_{\tau}\phi.
    \end{equation}
Here, $S_{\tau}(t)$ denotes the frequency localized Schr\"odinger flow given by
 \[
    S_{\tau}(t) \phi = S(t) \Pi_{\tau} \phi,
 \]
where
    \begin{equation}\label{eq:cut-off}
    \widehat{\Pi_{\tau} \phi} (\xi)
    = \chi (\tau^{1/2}\xi) \widehat{\phi}(\xi) ,\quad \xi \in \R^d,
    \end{equation}
and $\chi \in C^k(\R^d)$ is a cut-off function supported in $B^d
(0,2)$ such that $\chi \equiv 1$ on $B^d (0,1)$, $k$ an integer larger
than $1+d/2$
(this condition appears in the proof of Lemma~\ref{lem:cutoff}).
\smallbreak

All the error estimates associated to splitting methods for nonlinear
Schr\"odinger equations have been established so far for bounded time
intervals $[0,T]$ (with constants growing at best exponentially in
$T$).
As far as we are aware of, the same is true
regarding linear Schr\"odinger equations with potential.
On the other hand,
global in time estimates have been proven in the framework of kinetic
equations \cite{FHR18}. There,
time decay estimates (associated to a  scattering phenomenon) play
a crucial role.
In this paper, we prove an analogous result in the
case of \eqref{eq:NLS}, by using techniques related to the scattering
theory (in $\Sigma$), in order to get quantitative time decay
estimates) for nonlinear Schr\"odinger equations. More precisely, we
use a specific vectorfield, standard in the scattering theory for
\eqref{eq:NLS}, $J(t)=x+it\nabla$, which provides more precise decay
estimates in time than the mixed $L^q_tL^r_x$-norms appearing in
Strichartz estimates. It is well known that $J$ does not commute with
$S$, but
$J(t) = S(t)xS(-t)$  (see Proposition~\ref{prop:pseudoconf} below). A
new technical specific aspect in this paper is
that we also have to deal with the absence of commutation between $J$
and the frequency cut-off $\Pi_\tau$.

For any interval $I \subset [0,\infty)$, we define the space $\ell^q
(n \tau \in I;\, L^r (\R^d))$, or simply $\ell^q
( I;\, L^r )$ as
consisting of functions defined on $\tau \mathbb{Z} \cap I$ with values in $L^r (\R^d)$, the norm of which is given by
\begin{equation}\label{eq:ellqLr}
  \|u\|_{\ell^q (I;\, L^r )} = \left\{
    \begin{aligned}
      \Big( \tau \sum_{n\tau \in I} \|u(n \tau)\|_{L^r (\R^d)}^q \Big)^{1/q}
      & \qquad\text{ if }1\le q<\infty,\\
      \sup_{n\tau\in I} \|u(n \tau)\|_{L^r (\R^d)}& \qquad\text{ if } q=\infty.
    \end{aligned}
    \right.
\end{equation}
We recall the notion of admissible pairs in the context of
Schr\"odinger equation (we shall not need endpoint Strichartz
estimates, $(q,r)=(2,\frac{2d}{d-2})$ for $d\ge 3$).

\begin{definition}\label{def:adm}
 A pair $(q,r)$ is admissible if $2\le r
  <\frac{2d}{d-2}$ ($2\le r\le\infty$ if $d=1$, $2\le r<
  \infty$ if $d=2$)
  and
\[\frac{2}{q}=\delta(r):= d\left( \frac{1}{2}-\frac{1}{r}\right).\]
\end{definition}
\begin{remark}
  We note that the range for $q$ is equivalent to: $q\in (2,\infty]$ if
$d\ge 2$, and $q\in [4,\infty]$ if $d=1$.
\end{remark}
As it plays a central role in the analysis of \eqref{eq:NLS},
throughout this paper, and following \cite{ChoiKoh2021,Ignat2011}, we
denote by $(q_0, r_0)$ the admissible pair
\begin{equation*}
  (q_0,r_0)=\(\frac{4\si+4}{d\si},2\si+2\).
\end{equation*}

\begin{theorem}\label{theo:CVL2}
Let $d\le 5$, $\si$ satisfying \eqref{eq:sigma}, with in addition
$\si\ge 1/2$ (if $d=5$), $\phi\in \Sigma$, and $u\in C(\R;\Sigma)$ the
solution to \eqref{eq:NLS}. Suppose there exists $M_2$ such that
  \begin{equation}\label{eq:aprioriZ}
    \max_{A\in \{\Id,\nabla,J\}}\| A(n\tau)Z_{\tau}(n \tau)\|_{\ell^{\infty} (\tau\N; L^2)} +
	\| Z_{\tau}(n \tau)\|_{\ell^{q_0} (\tau\N; W^{1,r_0} )}\le M_2,
      \end{equation}
      where $J(t)=x+it\nabla$. Then there exists
      $C=C(d,\si,\|\phi\|_{\Sigma},M_2)$ such that for all $\tau\in
      (0,1)$,
      \begin{equation*}
       \sup_{n\ge 0} \|Z_\tau(n\tau)-u(n\tau)\|_{L^2(\R^d)}\le
       C\tau^{1/2}.
     \end{equation*}
     In addition, there exists $u_+\in \Sigma$ such that
     \begin{equation*}
       \lim_{k\to \infty}\sup_{n\ge k}\|Z_\tau(n\tau) -
       S(n\tau)u_+\|_{L^2(\R^d)}\le C\tau^{1/2}.
     \end{equation*}
\end{theorem}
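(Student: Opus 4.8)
The plan is to run the classical telescoping/bootstrap argument for splitting errors, but carried out in the weighted space $\Sigma$ so that the operator $J(t)=x+it\nabla$ is available to furnish time decay. Write $e_n := Z_\tau(n\tau) - u(n\tau)$ and decompose, as usual, the one-step error as a sum of a \emph{local consistency error} (coming from the Lie-Trotter approximation of the true flow on a step of length $\tau$, plus the frequency cut-off $\Pi_\tau$) and a \emph{stability error} (the difference of the two flows applied to $Z_\tau((n-1)\tau)$ versus $u((n-1)\tau)$). For the consistency error one expands $S(\tau)N(\tau)$ against the exact flow using the Duhamel formula; the leading-order commutator between $\frac12\Delta$ and $|u|^{2\sigma}u$ produces a term of size $O(\tau^2)$ per step measured in $L^2$ provided one controls two spatial derivatives of $u$, hence the hypothesis $\sigma\ge 1/2$ and $d\le 5$. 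The cut-off contributes $\|(\Id-\Pi_\tau)u\|_{L^2}=O(\tau^{1/2})$ per unit time via Lemma~\ref{lem:cutoff}, and this is the term responsible for the final $\tau^{1/2}$ rate (it cannot be improved to $\tau$ because $\Pi_\tau$ only truncates at frequency $\tau^{-1/2}$). Summing $O(\tau)$ local errors over $\lfloor t/\tau\rfloor$ steps would give only $O(1)$ on $[0,\infty)$, so the naive Gronwall estimate is useless globally; this is where the decay estimates enter.

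The key new point is that, thanks to the a priori bound \eqref{eq:aprioriZ} on $\|J(n\tau)Z_\tau(n\tau)\|_{\ell^\infty L^2}$ and the analogous exact-solution bound coming from scattering theory in $\Sigma$ (pseudo-conformal law, Proposition~\ref{prop:pseudoconf}), one has Gagliardo--Nirenberg-type decay $\|u(t)\|_{L^{2\sigma+2}}\lesssim \langle t\rangle^{-\delta(2\sigma+2)}$ and the same for $Z_\tau$, with $\delta(2\sigma+2)=d\sigma/(2\sigma+2)$. Because $\sigma\ge 2/d$ is mass-supercritical, this exponent is $\ge 1$, so the nonlinear interaction strength $\|u\|_{L^{2\sigma+2}}^{2\sigma}$ (which appears as the coefficient in the stability/Gronwall estimate and also weights the consistency error once one inserts the decay) is time-integrable — precisely, it lies in $L^1_t$ near $t=\infty$, or at least combines with the summable $\ell^{q_0}$ norms in \eqref{eq:aprioriZ} to give a finite total. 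One then sets up the estimate as a \emph{global} discrete Gronwall inequality
\[
\|e_n\|_{L^2} \le C\tau^{1/2} + \sum_{m<n} \omega_m\,\|e_m\|_{L^2},\qquad \sum_m \omega_m < \infty,
\]
where the weights $\omega_m$ bundle together the per-step stability coefficient (controlled by $\|Z_\tau(m\tau)\|_{W^{1,r_0}}^{q_0'}$-type quantities, summable by the $\ell^{q_0}W^{1,r_0}$ bound) and the decay factors. A discrete Gronwall lemma with summable weights then yields $\sup_n\|e_n\|_{L^2}\le C\tau^{1/2}$ with $C$ depending only on $d,\sigma,\|\phi\|_\Sigma,M_2$, which is the first assertion.

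For the scattering statement, recall that the exact solution $u(t)$ satisfies $S(-t)u(t)\to u_+$ in $\Sigma$ (hence in $L^2$) as $t\to\infty$; this is the standard consequence of the $\Sigma$-scattering theory underlying \eqref{eq:aprioriZ}, and $u_+\in\Sigma$. Then write
\[
\|Z_\tau(n\tau)-S(n\tau)u_+\|_{L^2} \le \|Z_\tau(n\tau)-u(n\tau)\|_{L^2} + \|u(n\tau)-S(n\tau)u_+\|_{L^2} \le C\tau^{1/2} + \|S(-n\tau)u(n\tau)-u_+\|_{L^2},
\]
using unitarity of $S$ on $L^2$. Taking $\sup_{n\ge k}$ and then $k\to\infty$ kills the second term, giving the claimed limit.

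The main obstacle, and the reason the paper needs ``several technical modifications,'' is the lack of commutation between $J(t)$ and the frequency cut-off $\Pi_\tau$: in the exact scattering theory one freely commutes $J$ through $S$ and the nonlinearity (it acts as a derivation on $|u|^{2\sigma}u$ up to the usual care with the non-holomorphic nonlinearity), but here $J(t)S_\tau(t)=S(t)\,x\,\Pi_\tau S(-t)\,S(t)=S(t)x S(-t)\Pi_\tau + S(t)[x,\Pi_\tau]S(-t)$, and the commutator $[x,\Pi_\tau]=i\tau^{1/2}(\nabla\chi)(\tau^{1/2}D)$ is a bounded Fourier multiplier of size $O(\tau^{1/2})$. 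So each time one propagates the $J$-norm through a step of the modified scheme one pays an extra $O(\tau^{1/2})$ commutator term; one must check these accumulate only to $O(1)$ over $[0,\infty)$ — again using the summability furnished by the $\ell^{q_0}$ norm and the decay — rather than destroying the uniform-in-time control. Keeping this bookkeeping consistent with the a priori hypothesis \eqref{eq:aprioriZ} (which is itself what guarantees the decay) is the delicate part; everything else is the by-now standard Lie-Trotter consistency/stability machinery transplanted into $\Sigma$.
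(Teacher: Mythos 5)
Your broad strategy---use $J$ and the pseudo-conformal law to obtain time decay, control the Gronwall coefficient globally, and handle $[J,\Pi_\tau]=O(\tau^{1/2})$---correctly identifies the ingredients, and your derivation of the scattering conclusion from the $L^2$ error bound is exactly right. But the decay-exponent arithmetic at the heart of your argument is wrong, and the per-step Gronwall framework would not close. You assert $\delta(2\sigma+2)=d\sigma/(2\sigma+2)\ge 1$ for $\sigma\ge 2/d$; but $\delta(2\sigma+2)\ge 1$ is equivalent to $\sigma\ge 2/(d-2)$, which sits \emph{outside} the energy-subcritical range $\sigma<2/(d-2)_+$, so in fact $\delta(r_0)<1$ throughout the paper's regime. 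Consequently $\|u\|_{L^{2\sigma+2}}^{2\sigma}\lesssim\langle t\rangle^{-2\sigma\delta(r_0)}$ is not in $L^1_t$ for $d\ge 3$ and $\sigma$ near $2/d$, and the summable weights you posit do not exist as stated. The quantity that actually works is $\gamma\delta(r_0)>1$ with $\gamma=4\sigma(\sigma+1)/(2-(d-2)\sigma)$, the H\"older exponent forced by the Strichartz duality $\frac1{q_0'}=\frac1{q_0}+\frac{2\sigma}{\gamma}$ (see \eqref{eq:gamma} and \eqref{eq:gammadeltar0}); the coefficient controlled in time is $\|u\|_{L^\gamma(I;L^{r_0})}^{2\sigma}$, not $\|u(t)\|_{L^{2\sigma+2}}^{2\sigma}$.

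The per-step consistency heuristic is also off-target: with $\Sigma$ (not $H^2$) data one cannot aim for $O(\tau^2)$ per step, and the paper does not sum per-step errors at all. Instead it writes the discrete Duhamel formula over a whole interval $I_j$, estimates the full accumulated source $\mathcal A_4$ once by Lemma~\ref{lem:A4} (the $\tau^{-1/2}$ loss from $\nabla\Pi_\tau$ in Lemma~\ref{lem:bernstein} is compensated by the $\tau$-gain from comparing the Riemann sum to its integral in Lemma~\ref{lem-5-2}, and Lemma~\ref{lem:source} handles the effective $4\sigma$-power term), \emph{absorbs} the stability term $\mathcal A_3$ into the left-hand side by choosing the intervals so that $\|u\|_{\ell^\gamma L^{r_0}}+\|Z_\tau\|_{\ell^\gamma L^{r_0}}\le\eta$ is small, and then propagates the boundary error $\mathcal A_1$ by a finite induction over the finitely many intervals $I_j$ (finitely many precisely because $\gamma\delta(r_0)>1$). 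This bootstrap-absorption over finitely many intervals is what yields a constant independent of $\tau$ and of the time horizon; a per-step Gronwall with the weights you describe would not.
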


\begin{remark}
  We prove convergence in the $L^2$-topology. It is very likely that
  it holds also in the $H^1$-topology if we require in addition  $\phi\in H^2$, in view of the second result in Theorem \ref{theo:B}.
\end{remark}

\begin{remark}
  The existence of the asymptotic state $u_+$ can be understood as
  follows: for sufficiently large time, nonlinear effects have become
  negligible, and the action of the nonlinear flow $N(\tau)$ converges
  (fast enough) toward the identity, so the splitting operator $Z_\tau$
  behaves like $S_\tau$, which in turn is equivalent to $S$ for smooth
  functions. This is the meaning of the last estimate in
  Theorem~\ref{theo:CVL2} which, in some sense, makes the uniform error
  estimate more precise.
\end{remark}

Now demanding $\si>1/2$, and not only $\si\ge
  1/2$ (see the proof of Proposition~\ref{prop:discrete-stability} for the
    reason why this constraint is introduced),
we show that the
assumptions of Theorem~\ref{theo:CVL2} are indeed satisfied by the
numerical solution:
\begin{theorem}\label{theo:stability}
  Let $d\le 5$, $\si$ satisfying \eqref{eq:sigma},
with in addition
$\si> 1/2$ (if $d=4$ or $5$), and $\phi\in \Sigma$.
 Then, for any
admissible pair $(q,r)$,
there exists $C(d,\si,q,\phi)$ such that  for all $\tau\in (0,1)$, the
numerical solution $Z_\tau$ satisfies
\begin{equation*}
  \max_{A\in
    \{\Id,\nabla,J\}}\|A(n\tau)Z_\tau(n\tau)\|_{\ell^q(\tau\N;L^r)}\le
  C(d,\si,q,\phi).
\end{equation*}
\end{theorem}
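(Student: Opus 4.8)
The plan is to run a continuation (bootstrap) argument directly on the numerical solution, splitting the time axis into a fixed initial window $[0,T_0]$, on which $T_0$‑dependent finite‑horizon estimates are allowed, and a final window $[T_0,\infty)$ on which one exploits dispersive decay to close a \emph{global} estimate; the threshold $T_0=T_0(\phi)$ is chosen large using the (known) scattering theory in $\Sigma$ for the exact solution. I would first assemble the tools: (i) the discrete‑in‑time Strichartz estimates for the frequency‑truncated flow $S_\tau$, homogeneous and inhomogeneous, with constants independent of $\tau$ and of the interval (available from \cite{IgnatZuazua2009,Ignat2011,ChoiKoh2021}, the cut‑off $\Pi_\tau$ absorbing the loss noted in \cite{StKe05}); (ii) the nonlinear estimates $\||u|^{2\si}u\|_{W^{1,r_0'}}\lesssim\|u\|_{L^{r_0}}^{2\si}\|u\|_{W^{1,r_0}}$ and their gauge‑invariant analogue $\|J(t)(|u|^{2\si}u)\|_{L^{r_0'}}\lesssim\|u\|_{L^{r_0}}^{2\si}\|J(t)u\|_{L^{r_0}}$, valid since $J(t)=it\,e^{i|x|^2/(2t)}\nabla\big(e^{-i|x|^2/(2t)}\,\cdot\,\big)$ behaves like $\nabla$ on modulus‑one–modulated functions (this is where $\si>1/2$ is convenient, cf. Proposition~\ref{prop:discrete-stability}); and (iii) the dispersive bound $\|v\|_{L^{r_0}}\lesssim (n\tau)^{-2/q_0}\|v\|_{L^2}^{1-2/q_0}\|J(n\tau)v\|_{L^2}^{2/q_0}$, which turns an $\ell^\infty$ bound on $J(n\tau)Z_\tau(n\tau)$ into a bound on $\|Z_\tau\|_{\ell^{q_1}((T_0,\infty);L^{r_0})}$ for a suitable $q_1>q_0/2$ (finite thanks to energy‑subcriticality), since $\sum_{n\tau\ge T_0}\tau\,(n\tau)^{-2q_1/q_0}$ converges and vanishes as $T_0\to\infty$.

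\emph{Discrete Duhamel formula for $\mathbf 1,\nabla,J$.} Writing $u^n=Z_\tau(n\tau)$ and using $N(\tau)u^n-u^n=u^n(e^{-i\tau|u^n|^{2\si}}-1)$ (of size $\lesssim\tau|u^n|^{2\si+1}$, with $\nabla$ or $J$ of it of size $\lesssim\tau|u^n|^{2\si}|\,\cdot\,u^n|$ after inserting $\RE(\bar u^n\nabla u^n)=(n\tau)^{-1}\IM(\bar u^n J(n\tau)u^n)$), one telescopes, keeping track of the commuting frequency cut‑offs ($\Pi_\tau S=S\Pi_\tau$) and of $J(n\tau)S((n-k)\tau)=S((n-k)\tau)J(k\tau)$, to obtain for $A\in\{\mathbf 1,\nabla,J\}$, started at an index $n_0$,
\begin{equation*}
A(n\tau)u^n=S((n-n_0)\tau)A(n_0\tau)u^{n_0}+\sum_{k=n_0}^{n-1}S((n-k)\tau)\Pi_\tau\,A(k\tau)\big(N(\tau)u^k-u^k\big)+\mathcal R_n,
\end{equation*}
where $\mathcal R_n=0$ for $A\in\{\mathbf 1,\nabla\}$ and, for $A=J$, $\mathcal R_n=\sum_{k=n_0}^{n-1}S((n-k)\tau)[x,\Pi_\tau]N(\tau)u^k$ arises from the non‑commutation of $J$ and $\Pi_\tau$. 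Controlling $\mathcal R_n$ \emph{uniformly in $n$}, against the Strichartz norms of the solution rather than as the useless bound $n\tau^{1/2}$, is the genuinely new point: here Lemma~\ref{lem:cutoff} enters, allowing one to trade the weight $x$ hitting the cut‑off for smallness in $\tau$ together with a (frequency‑localized) derivative gain, exploiting $\chi\in C^k$ with $k>1+d/2$, so that $\mathcal R_n$ can be treated like an extra nonlinear term.

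\emph{Initial window and global bootstrap.} On $[0,T_0]$, the finite‑horizon analysis of the modified scheme (Strichartz bootstrap as in the cited works, equivalently as in the proof of Theorem~\ref{theo:CVL2}) gives $\max_A\|A(n\tau)u^n\|_{\ell^q([0,T_0];L^r)}\le C(T_0,\phi)$ for every admissible $(q,r)$ and every $\tau\in(0,1)$. Since the exact solution satisfies $\sup_t\big(\|u(t)\|_{H^1}+\|J(t)u(t)\|_{L^2}\big)\le C(\phi)$ and $\|u\|_{L^{q_0}(\R;W^{1,r_0})}+\sup_t\langle t\rangle^{2/q_0}\|u(t)\|_{L^{r_0}}\le C(\phi)$ by the scattering theory in $\Sigma$ (pseudo‑conformal evolution law, Proposition~\ref{prop:pseudoconf}, together with \cite{GV79Scatt,CazCourant}), I first fix $T_0=T_0(\phi)$ so large that $\|u\|_{L^{q_1}((T_0,\infty);L^{r_0})}+\|u\|_{L^{q_0}((T_0,\infty);W^{1,r_0})}$ is as small as desired. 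With $n_0=\lceil T_0/\tau\rceil$ and $A_0:=\max_A\|A(n_0\tau)u^{n_0}\|_{L^2}$, one has $A_0\le C(\phi)$ uniformly (using $\|u^n\|_{L^2}\le\|\phi\|_{L^2}$ and, for $\tau$ small enough, the $[0,T_0]$ error estimate which places $u^{n_0}$ within $1$ of $u(T_0)$ in $H^1$ and in the $J$‑weighted norm; the finitely many larger values of $\tau$ are absorbed in the final constant). I then bootstrap $\mathcal M(T):=\sum_{(q,r)\,\mathrm{adm.}}\max_{A\in\{\mathbf 1,\nabla,J\}}\|A(n\tau)u^n\|_{\ell^q(I_T;L^r)}$ with $I_T=\{n\tau:\ n_0\le n,\ n\tau\le T\}$: inserting the Duhamel formula into discrete Strichartz, bounding the nonlinear term by $\|u^k\|_{L^{r_0}}^{2\si}\|A(k\tau)u^k\|_{W^{1,r_0}}$ and Hölder in $k$ with exponents built from $(q_1,q_0)$, using the dispersive bound to write $\|u\|_{\ell^{q_1}(I_T;L^{r_0})}\lesssim \eta(T_0)\,\mathcal M(T)^{2/q_0}$ with $\eta(T_0)\to0$, and estimating $\mathcal R_n$ via Lemma~\ref{lem:cutoff}, one gets $\mathcal M(T)\le C\big(A_0+\eta(T_0)^{2\si}\,\mathcal M(T)^{1+2\si\cdot(2/q_0)}\big)$. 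As $A_0\le C(\phi)$ uniformly, a further enlargement of $T_0$ closes the bootstrap and yields $\mathcal M(T)\le 2CA_0\le C(\phi)$ for all $T$; combined with the $[0,T_0]$ bound this gives the assertion with constant $C(d,\si,q,\phi)$. I expect the main obstacle to be the remainder $\mathcal R_n$ of the middle step: all earlier splitting error analyses live on bounded intervals and never meet the interaction of the scattering vectorfield $J$ with the frequency cut‑off, and forcing this term to be harmless uniformly in time is exactly what drives the smoothness requirement on $\chi$ and the detour through Lemma~\ref{lem:cutoff}.
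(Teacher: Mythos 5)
You have correctly identified the key technical ingredients: the commutator $[J,\Pi_\tau]$ (Lemma~\ref{lem:cutoff}), the dispersive decay generated by $J$ near $t=\infty$, and the discrete Duhamel/Strichartz machinery. However, there is a genuine gap at the interface between your two windows $[0,T_0]$ and $[T_0,\infty)$: you need $A_0=\max_A\|A(n_0\tau)Z_\tau(n_0\tau)\|_{L^2}\le C(\phi)$ uniformly in $\tau$, and neither of the two routes you offer delivers it.

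First, a self-contained Strichartz bootstrap for $Z_\tau$ on $[0,T_0]$ (essentially Proposition~\ref{prop:local-stab} iterated) multiplies the $\chi$-norm by a fixed factor $>1$ on each subinterval; partitioning $[0,T_0]$ into $\sim T_0$ pieces yields $A_0\lesssim C^{T_0}$, exponential in $T_0$. Since $\eta(T_0)=\|\langle t\rangle^{-\delta(r_0)}\|_{L^\gamma((T_0,\infty))}\sim T_0^{-(\gamma\delta(r_0)-1)/\gamma}$ decays only polynomially, the closing condition $\eta(T_0)A_0\lesssim 1$ for your $[T_0,\infty)$ bootstrap can never be met: enlarging $T_0$ to shrink $\eta$ destroys it through $A_0$, and there is no fixed point. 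Second, the escape hatch you invoke---``the $[0,T_0]$ error estimate which places $u^{n_0}$ within $1$ of $u(T_0)$ in $H^1$ and in the $J$-weighted norm''---does not exist as an off-the-shelf result. The finite-horizon convergence theorems for the truncated Lie splitting \cite{IgnatZuazua2009,Ignat2011,ChoiKoh2021} are $L^2$ statements, and Theorem~\ref{theo:CVL2} is also $L^2$ (and moreover assumes the very a~priori bound \eqref{eq:aprioriZ} you are trying to prove, so it cannot be used here). Upgrading the error to $H^1$ and to the $J$-weighted norm is exactly the new content of Proposition~\ref{prop:discrete-stability}, and its proof forces a detour that is entirely absent from your sketch: the local error estimate \eqref{eq:local-error-H2} requires $H^2\cap\Sigma$ data (so that the second point of Theorem~\ref{theo:B} applies and $Au^\psi\in L^q(\R;W^{1,r})$), so the paper approximates $u^\phi(m_j\tau)$ by $\psi_j\in H^2\cap\Sigma$ on every interval and compares $Z_\tau^\phi\to Z_\tau^{\psi_j}\to u^{\psi_j}\to u^\phi$, working against the \emph{exact} solution $u^\phi$, whose $\Sigma$-bound is time-independent. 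That is what tames the geometric amplification $(3C_0)^j$ across intervals, because the error is initialized at size $M/(10C_0)^K$. Your bootstrap, which only references $Z_\tau$ at time $T_0$ and never compares to $u$, has no such mechanism and cannot control the growth in the initial window.

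A smaller but telling point: you describe $\sigma>1/2$ as merely convenient. It is in fact essential at the very step your outline elides: the Lipschitz/error estimates in Lemma~\ref{lem:more1} and the Bernstein interpolation \eqref{eq:4sigma-1} require the Lebesgue exponent $\gamma/(2\sigma-1)$ to be finite, which fails at $\sigma=1/2$. An argument that never needs $\sigma>1/2$ should raise the suspicion that the $J$-weighted error control on $[0,T_0]$ has been assumed rather than proved.
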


\begin{remark}\label{rem:non-disp}
 In view of classical results on nonlinear Schr\"odinger equations
 (see e.g \cite{CazCourant,TaoDisp}), Theorems~\ref{theo:CVL2} and
 \ref{theo:stability} remain valid in the case of a focusing
 nonlinearity ($|u|^{2\si}u$ is replaced by $-|u|^{2\si}u$ in
 \eqref{eq:NLS}), provided that $\|\phi\|_{\Sigma}$ is sufficiently
 small. Without smallness assumption, finite time blow up is possible,
 but even global solutions need not be dispersive, since standing wave
 solutions of
 the form $u(t,x)=e^{i\omega t}\phi(x)$ exist. Theorems~\ref{theo:CVL2} and
 \ref{theo:stability}  highly rely on dispersive properties of the
 solution to \eqref{eq:NLS}, and should not be expected to remain true
 in the case of standing waves. More general nonlinearities than the
 homogeneous one considered in \eqref{eq:NLS} could  be addressed though
 (typically, combined power nonlinearities), provided that suitable a
 priori estimates (in the spirit of the pseudoconformal conservation
 law recalled in Proposition~\ref{prop:pseudoconf}) are
 available. This is for instance the case when the nonlinearity is the
 sum of two defocusing  homogeneous terms,
 $|u|^{2\si_1}u+|u|^{2\si_2}u$, $\frac{2}{d}\le
 \si_j<\frac{2}{(d-2)_+}$, but the situation is more involved when it
 is the sum of a focusing and a defocusing term, since standing waves
exist (see e.g. \cite{KOPV17}).
\end{remark}

\begin{remark}
As recalled in Section~\ref{sec:exact}, the assumptions on $\si$ and
$\phi$ ensure that the solution $u$ to \eqref{eq:NLS} is global, and
satisfies $u\in   L^q(\R;L^r(\R^d))$ for all admissible pairs. It is
tempting to conjecture that Theorem~\ref{theo:stability} remains
true under the assumptions that $u\in   L^q(\R;L^r(\R^d))$ for all
admissible pairs, a property that actually follows from the weaker one
$u\in   L^{q_0}(\R;L^{r_0}(\R^d))$, see
Section~\ref{sec:exact}. However, the introduction of the operator $J$
induces stronger estimates, and it is not clear at all that the proof
of Theorem~\ref{theo:stability}  can be adapted to this broader
setting (typically, one could assume $\phi\in H^1(\R^d)$ only).
 \end{remark}
\subsection*{Organization of the paper}

The rest of this paper is organized as follows. In Section~\ref{sec:exact}, we recall some
standard results related to \eqref{eq:NLS}, including global in time
estimates, establish some new ones (in particular
Theorem~\ref{theo:B}), to provide general estimates on
solutions to \eqref{eq:NLS}. Section~\ref{sec:numerical} is devoted to
general estimates involving the numerical solution (discrete in
time). The main novelties concern the introduction of the operator
$J$, and the difficulties caused by its absence of commutation with the
frequency cut-off $\Pi_\tau$. Theorem~\ref{theo:CVL2} is proved in
Section~\ref{sec:CVL2}. In Section~\ref{sec:local}, we establish local
$\Sigma$ stability results (Proposition~\ref{prop:local-stab}); there,
local means local in time, allowing a neighborhood to $t=\infty$. In
Section~\ref{sec:more}, we prove refined estimates allowing to
conclude with the proof of Theorem~\ref{theo:stability} in
Section~\ref{sec:stability}.
Finally Section~\ref{sec:conclusion} is dedicated to a summary and a list of
possible related extensions.

\subsection*{General notations}
\begin{itemize}
\item We denote by $\Id$  the identity operator.
\item We denote by $L^r$ the standard space $L^r(\mathbb{R}^d)$ for $1\le r\le \infty$.
\item For $y\in \R^n$, $n\in \N$, the Japanese bracket is
  $\<y\>=(1+|y|^2)^{1/2}$.
\item We denote by $C$ generic constants, which may vary from line to
  line.
\item We underline the dependence of constants as follows: $C(d,\si)$ or $C_{d, \si}$
  means that $C$ does not depend on other parameters, such as $\tau$
  or $\phi$.
\item For $a,b\ge0$, we use the notation
 \begin{equation*}
    a\lesssim b
 \end{equation*}
whenever there exists a constant $C$ independent of $\tau\in (0,1)$ and the
time interval considered (but certainly depending on $d$ and $\si$)
such that $a\le Cb$.
\item When considered useful, we write $u^\phi$ to underscore that $u$
  is the solution to \eqref{eq:NLS} with initial datum $\phi$
  (typically, when several such solutions are involved).
\end{itemize}


\section{Preliminary estimates: the exact solution}
\label{sec:exact}

\subsection{Generalities}
For $t_0\in\R$, Duhamel formula for the solution $u$ to
\eqref{eq:NLS} with the initial condition $u_{\mid
  t=t_0}=\phi$, reads as follows:
 \begin{equation}\label{eq:duhamel}
    u(t) = S(t-t_0) \phi - i \int_{t_0}^t S(t-s) \(|u|^{2\si} u\) (s) ds.
\end{equation}
The standard Strichartz inequalities associated to the
Schr\"odinger equation (see e.g. \cite{CazCourant,TaoDisp}) are
summarized below. We recall
that the notion of admissible pairs was introduced in Definition~\ref{def:adm}.
\begin{proposition}[Strichartz estimates]\label{prop:strichartz}
  Let $d\ge 1$ and $S(t)=e^{i\frac{t}{2}\Delta}$. \\
$(1)$ \emph{Homogeneous estimates.} For any admissible pair $(q,r)$, there exists $C_{q}$  such that
\begin{equation*}
  \|S(t)\phi\|_{L^q(\R;L^r)} \le C_q
\|\phi \|_{L^2},\quad \forall \phi\in L^2.
\end{equation*}
$(2)$ \emph{Inhomogeneous estimates.}
Denote
\begin{equation*}
  D(F)(t,x) = \int_0^t S(t-s)F(s,x)ds.
\end{equation*}
For all admissible pairs $(q_1,r_1)$ and~$
    (q_2,r_2)$,  there exists $C=C_{q_1,q_2}$ such that for all
    intervals $I\ni 0$,
\begin{equation}\label{eq:strichnl}
      \left\lVert D(F)
      \right\rVert_{L^{q_1}(I;L^{r_1})}\le C \left\lVert
      F\right\rVert_{L^{q'_2}\(I;L^{r'_2}\)},\quad \forall F\in L^{q'_2}(I;L^{r'_2}).
\end{equation}
\end{proposition}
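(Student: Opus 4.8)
The statement collects the two standard forms of the Strichartz inequalities, so the plan is the classical $TT^{*}$ argument fed by the dispersive estimate for the free propagator. Everything rests on two properties of $S(t)=e^{i\frac t2\Delta}$: it is unitary on $L^2$ by Plancherel, $\|S(t)\phi\|_{L^2}=\|\phi\|_{L^2}$, and it satisfies the pointwise-in-time dispersive estimate $\|S(t)\phi\|_{L^\infty}\le(2\pi|t|)^{-d/2}\|\phi\|_{L^1}$ for $t\neq 0$, which one reads off from the explicit convolution kernel $(2\pi it)^{-d/2}e^{i|x|^2/(2t)}$. Riesz--Thorin interpolation between these two bounds yields, for every $2\le r\le\infty$,
\[
  \|S(t)\phi\|_{L^r}\lesssim |t|^{-\delta(r)}\|\phi\|_{L^{r'}},\qquad \tfrac1r+\tfrac1{r'}=1 ,
\]
with $\delta(r)$ as in Definition~\ref{def:adm}.

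For the homogeneous estimate, fix an admissible pair $(q,r)$ and let $T\colon L^2\to L^q(\R;L^r)$, $T\phi=S(\cdot)\phi$. Then $T^{*}F=\int_\R S(-s)F(s)\,ds$ and $TT^{*}F=\int_\R S(\cdot-s)F(s)\,ds$, so by the $TT^{*}$ lemma the bound $\|T\phi\|_{L^q(\R;L^r)}\lesssim\|\phi\|_{L^2}$ is equivalent to
\[
  \Big\|\int_\R S(t-s)F(s)\,ds\Big\|_{L^q(\R;L^r)}\lesssim \|F\|_{L^{q'}(\R;L^{r'})}.
\]
Pulling the $L^r_x$-norm inside the $s$-integral and invoking the interpolated dispersive bound gives $\big\|\int_\R S(t-s)F(s)\,ds\big\|_{L^r_x}\lesssim\big(|\cdot|^{-\delta(r)}*\|F(\cdot)\|_{L^{r'}_x}\big)(t)$, and the Hardy--Littlewood--Sobolev inequality in $t$ converts this into the desired $L^q_t$-bound exactly when $\frac2q=\delta(r)$ and $0<\delta(r)<1$ --- that is, for admissible $(q,r)$ with $r>2$; the case $r=2$ (hence $q=\infty$) is just $L^2$-conservation.

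For the inhomogeneous estimate I would first prove the non-retarded version with independent pairs. By duality and the homogeneous estimate for $(q_2,r_2)$, $\big\|\int_\R S(-s)F(s)\,ds\big\|_{L^2}=\sup_{\|\psi\|_{L^2}=1}\big|\int_\R\langle F(s),S(s)\psi\rangle\,ds\big|\lesssim\|F\|_{L^{q_2'}(\R;L^{r_2'})}$, while the homogeneous estimate for $(q_1,r_1)$ gives $\|S(\cdot)\psi\|_{L^{q_1}(\R;L^{r_1})}\lesssim\|\psi\|_{L^2}$. Composing (the operator $F\mapsto\int_\R S(\cdot-s)F(s)\,ds$ factors through $L^2$) yields $\big\|\int_\R S(t-s)F(s)\,ds\big\|_{L^{q_1}(\R;L^{r_1})}\lesssim\|F\|_{L^{q_2'}(\R;L^{r_2'})}$. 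To pass from $\int_\R$ to the retarded integral $D(F)(t)=\int_0^t S(t-s)F(s)\,ds$ --- and to restrict to an interval $I\ni0$, which is immediate after extending $F$ by zero --- apply the Christ--Kiselev lemma; it is available because $q_1>q_2'$ throughout the non-endpoint admissible range (both $q_1$ and $q_2$ exceed $2$ when $d\ge2$ and are at least $4$ when $d=1$, so $q_1>2>q_2'$), with a constant depending only on $d$ and the exponents, hence uniform in $I$.

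\textbf{Main obstacle.} Because the endpoint $(q,r)=(2,\tfrac{2d}{d-2})$ is deliberately excluded in Definition~\ref{def:adm}, none of these steps is actually delicate: Hardy--Littlewood--Sobolev requires $0<\delta(r)<1$, which breaks down only at $r=\tfrac{2d}{d-2}$ where $\delta(r)=1$, and the Christ--Kiselev step needs the \emph{strict} inequality $q_1>q_2'$, which degenerates only when some exponent $q$ equals $2$. The one place where a reader should pause is precisely this endpoint exclusion: recovering it would force replacing the $TT^{*}$/Hardy--Littlewood--Sobolev scheme by the bilinear interpolation argument of Keel--Tao, which the paper does not need and I would not attempt here.
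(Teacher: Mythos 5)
The paper states this proposition without proof, citing \cite{CazCourant,TaoDisp}, so there is no internal argument to compare against; your write-up is precisely the standard non-endpoint Strichartz proof found in those references: dispersive estimate, Riesz--Thorin interpolation, $TT^*$ plus Hardy--Littlewood--Sobolev for the homogeneous bound, and duality through $L^2$ plus Christ--Kiselev for the retarded inhomogeneous bound. Your accounting of where the hypotheses are used is accurate --- in particular you correctly note that $r=2$ (i.e.\ $q=\infty$) must be peeled off as pure $L^2$-conservation before invoking HLS, and that the strict inequality $q_1>q_2'$ needed for Christ--Kiselev holds throughout the admissible range in Definition~\ref{def:adm} precisely because the endpoint $q=2$ is excluded; one small remark is that for $q_1=\infty$ the Christ--Kiselev step is unnecessary, as the retarded estimate follows directly from applying the $T^*$ bound to $\mathbf{1}_{[0,t]}F$ and using unitarity of $S(t)$.
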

With Strichartz and H\"older inequalities in mind, we remark that
$r_0$ satisfies
\begin{equation*}
  \frac{1}{r_0'}=\frac{2\si+1}{r_0},
\end{equation*}
with $2\si+1$ being the homogeneity of the nonlinearity in \eqref{eq:NLS}, and we introduce $\gamma$ given by
\begin{equation}\label{eq:gamma}
  \frac{1}{q_0'}=\frac{1}{q_0}+\frac{2\si}{\gamma}\Longleftrightarrow  \gamma=
  \frac{4\si(\si+1)}{2-(d-2)\si}.
\end{equation}
We see that $\gamma$ is finite since the nonlinearity is
energy-subcritical. The above relations will be applied many times, as
follows (according to whether continuous or discrete time intervals
are considered), recalling that $2\si\ge 1$:
\begin{equation}
  \label{eq:holder}
  \begin{aligned}
      &\left\| |f|^{2\si-1}gh\right\|_{L^{q_0'}(I;L^{r_0'})}\le
  \|f\|_{L^\gamma(I;L^{r_0})}^{2\si-1}
  \|g\|_{L^{\gamma}(I;L^{r_0})}\|h\|_{L^{q_0}(I;L^{r_0})},\quad
  \text{or}\\
  & \left\| |f|^{2\si-1}gh\right\|_{\ell^{q_0'}(I;L^{r_0'})}\le
  \|f\|_{\ell^\gamma(I;L^{r_0})}^{2\si-1}
  \|g\|_{\ell^{\gamma}(I;L^{r_0})}\|h\|_{\ell^{q_0}(I;L^{r_0})}.
\end{aligned}
\end{equation}
The following result was discovered in \cite{GV79Scatt}, and is
crucial to turn the local error estimates from
\cite{Ignat2011,ChoiKoh2021} into global ones:

\begin{proposition}[Pseudo-conformal conservation law]\label{prop:pseudoconf}
The operator
\[J(t)=x+it\nabla\]
satisfies the following properties:
  \begin{itemize}
  \item $J(t) =S(t)xS(-t)$, and therefore $J$ commutes with the
 linear part of \eqref{eq:NLS},
\begin{equation}\label{eq:Jcommute}
\left[ J(t),i\d_t +\frac{1}{2}\Delta\right]=0\, .
\end{equation}
\item It can be factorized as
 \begin{equation*}
J(t)= i t \, e^{i\frac{|x|^2}{2t}}\nabla\Big( e^{-i\frac{|x|^2}{2t}}\,
\cdot\Big)\, .
\end{equation*}
As a consequence, $J$ yields weighted Gagliardo-Nirenberg
inequalities. For
$2\le r <\frac{2d}{(d-2)_+}$ ($2\le r\le \infty$ if $d=1$), there exists
$C(d, r)$ depending only on $d$ and $r$ such that
\begin{equation}\label{eq:GNlibre}
\left\| f \right\|_{L^r}\le \frac{C(d, r)}{|t|^{\delta(r)}} \left\|
f \right\|_{L^2}^{1-\delta(r)} \left\|
J(t) f \right\|_{L^2}^{\delta(r)},\quad \delta(r):=d\(
\frac{1}{2}-\frac{1}{r}\)\, .
\end{equation}
Also, if $F(z)=G(|z|^2)z$ is $C^1$, then $J(t)$
acts like a derivative on $F(w)$:
\begin{equation}\label{eq:Jder}
J(t)\(F(w)\) = \d_z F(w)J(t)w -\d_{\overline z} F(w)\overline{ J(t)w
}\, .
\end{equation}
\end{itemize}
Any solution $u\in C(\R;\Sigma)$ to \eqref{eq:NLS} satisfies the
pseudo-conformal conservation law:
\begin{equation}\label{eq:pseudoconf}
\frac{d}{dt}\left(\frac{1}{2}\| J(t)u
\|^2_{L^2}+\frac{t^2}{\si+1}\|u
    (t)\|^{2\si+2}_{L^{2\si +2}}\right)=\frac{t}{\si+1}(2-d\si)\|u
    (t)\|^{2\si+2}_{L^{2\si+2}}.
\end{equation}
\end{proposition}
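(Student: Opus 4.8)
The plan is to establish the pseudo-conformal conservation law \eqref{eq:pseudoconf} by a direct energy computation, differentiating the left-hand side in time and using the equation \eqref{eq:NLS} together with the commutation and factorization properties of $J(t)$ listed in the first part of the proposition. The first step is to treat those preliminary properties of $J$ (which are classical): the identity $J(t)=S(t)xS(-t)$ follows from conjugating $x$ by the free flow and using $e^{-i\frac t2\Delta}xe^{i\frac t2\Delta}=x+it\nabla$ (a standard commutator computation, or Fourier side: $S(-t)$ turns $x$ into $x$ and picks up $it\nabla$ from differentiating the symbol $e^{-i\frac{t}{2}|\xi|^2}$), and \eqref{eq:Jcommute} is then immediate since $S(t)$ solves the free equation. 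The factorization $J(t)=it\,e^{i|x|^2/(2t)}\nabla(e^{-i|x|^2/(2t)}\,\cdot)$ is checked by expanding the right-hand side: $\nabla(e^{-i|x|^2/(2t)}f)=e^{-i|x|^2/(2t)}(\nabla f - i\frac{x}{t}f)$, so multiplying by $it\,e^{i|x|^2/(2t)}$ gives $it\nabla f + x f = J(t)f$. From this factorization, \eqref{eq:GNlibre} follows by applying the standard Gagliardo--Nirenberg inequality to $g:=e^{-i|x|^2/(2t)}f$, noting $\|g\|_{L^r}=\|f\|_{L^r}$, $\|g\|_{L^2}=\|f\|_{L^2}$, and $\|\nabla g\|_{L^2}=\frac{1}{|t|}\|J(t)f\|_{L^2}$; and the Leibniz-type rule \eqref{eq:Jder} follows since $J(t)$ acts as $it$ times a gauge-conjugated gradient and $\nabla$ differentiates $F(w)=G(|w|^2)w$ via $\nabla F(w)=\partial_zF(w)\nabla w+\partial_{\bar z}F(w)\overline{\nabla w}$, after which the gauge factors recombine (the $x/t$ terms from the gauge cancel in exactly the way that replaces $\nabla$ by $J(t)$ on each occurrence of $w$).

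The core of the proof is the differential identity \eqref{eq:pseudoconf}. First I would apply $J(t)$ to the equation: since $J(t)$ commutes with $i\partial_t+\frac12\Delta$ by \eqref{eq:Jcommute}, $J(t)u$ solves $(i\partial_t+\frac12\Delta)(J(t)u)=J(t)(|u|^{2\sigma}u)$, and using \eqref{eq:Jder} with $F(z)=|z|^{2\sigma}z$ (so $\partial_zF=(\sigma+1)|z|^{2\sigma}$ and $\partial_{\bar z}F=\sigma|z|^{2\sigma-2}z^2$) we get $J(t)(|u|^{2\sigma}u)=(\sigma+1)|u|^{2\sigma}J(t)u-\sigma|u|^{2\sigma-2}u^2\overline{J(t)u}$. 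Then I would compute $\frac{d}{dt}\frac12\|J(t)u\|_{L^2}^2=\RE\langle \partial_t(J(t)u), J(t)u\rangle$, substitute $\partial_t(J(t)u)=\frac i2\Delta(J(t)u)-iJ(t)(|u|^{2\sigma}u)$, observe that the Laplacian term contributes nothing to the real part after integration by parts (it is purely imaginary), and reduce to $\RE\big(-i\langle J(t)(|u|^{2\sigma}u),J(t)u\rangle\big)=\IM\langle J(t)(|u|^{2\sigma}u),J(t)u\rangle$. Plugging in the two-term expression for $J(t)(|u|^{2\sigma}u)$, the first term $(\sigma+1)|u|^{2\sigma}|J(t)u|^2$ is real and drops out of the imaginary part, so only $-\sigma\,\IM\langle |u|^{2\sigma-2}u^2\overline{J(t)u},J(t)u\rangle=-\sigma\,\IM\int |u|^{2\sigma-2}u^2\overline{J(t)u}^2$ survives. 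Separately, differentiating the potential term $\frac{t^2}{\sigma+1}\|u\|_{L^{2\sigma+2}}^{2\sigma+2}$ gives $\frac{2t}{\sigma+1}\|u\|_{L^{2\sigma+2}}^{2\sigma+2}+\frac{t^2}{\sigma+1}\frac{d}{dt}\|u\|_{L^{2\sigma+2}}^{2\sigma+2}$; here $\frac{d}{dt}\|u\|_{L^{2\sigma+2}}^{2\sigma+2}=(2\sigma+2)\RE\int|u|^{2\sigma}\bar u\,\partial_t u$, and using $\partial_t u=\frac i2\Delta u-i|u|^{2\sigma}u$ reduces this to $(\sigma+1)\IM\int|u|^{2\sigma}\bar u\,\Delta u=-(\sigma+1)\IM\int\nabla(|u|^{2\sigma}\bar u)\cdot\nabla u$ (the nonlinear self-interaction being real).

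The remaining task is to reconcile the two contributions: I would expand $|J(t)u|^2$ and the cross term using the definition $J(t)u=xu+it\nabla u$, so that $\IM\int|u|^{2\sigma-2}u^2\overline{J(t)u}^2$ produces, besides a purely spatial piece and a piece proportional to $t^2$ (matching the Laplacian-type term coming from $\frac{d}{dt}\|u\|_{L^{2\sigma+2}}^{2\sigma+2}$), a cross piece linear in $t$; a careful bookkeeping shows the $t^0$ and $t^2$ pieces cancel against the corresponding terms, while the surviving $t^1$ terms combine with $\frac{2t}{\sigma+1}\|u\|_{L^{2\sigma+2}}^{2\sigma+2}$ to give exactly $\frac{t}{\sigma+1}(2-d\sigma)\|u\|_{L^{2\sigma+2}}^{2\sigma+2}$, the factor $d$ entering through $\int \diver(x|u|^{2\sigma+2})=0$ which yields $\int x\cdot\nabla(|u|^{2\sigma+2})=-d\int|u|^{2\sigma+2}$. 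The main obstacle is precisely this last algebraic reconciliation: organizing the integrations by parts so that the many $t$-dependent terms cancel in the right pattern, and in particular extracting the factor $2-d\sigma$ cleanly; it is mechanical but error-prone. A secondary, more analytic point is justifying that all these manipulations are legitimate for $u\in C(\R;\Sigma)$ rather than for Schwartz data — this is handled by a standard density/approximation argument (regularize the initial datum, derive the identity for smooth solutions, and pass to the limit using continuous dependence in $\Sigma$ and the energy-subcriticality which guarantees the nonlinear terms are controlled), so I would only remark on it rather than carry it out in detail.
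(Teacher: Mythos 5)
The paper does not prove Proposition~\ref{prop:pseudoconf}: it is stated and attributed to \cite{GV79Scatt} (with \cite{CazCourant} implicitly in the background), and the paper moves on to use it. So your proposal cannot be compared against the paper's proof; it has to be judged on its own. Judged that way, your plan is sound, and it follows a somewhat different route than the one usually found in the textbook references. The classical derivation (Cazenave's Courant notes, Ginibre--Velo) typically expands $\|J(t)u\|_{L^2}^2=\|xu\|_{L^2}^2+2t\IM\int\bar u\,x\cdot\nabla u+t^2\|\nabla u\|_{L^2}^2$ and combines the virial identity for $\frac{d}{dt}\|xu\|_{L^2}^2$, its second derivative, and energy conservation, without ever writing an evolution equation for $J(t)u$ itself. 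Your route instead uses the commutation \eqref{eq:Jcommute} to produce the equation $(i\partial_t+\tfrac12\Delta)(J(t)u)=J(t)(|u|^{2\sigma}u)$, then differentiates $\tfrac12\|J(t)u\|_{L^2}^2$ directly and exploits \eqref{eq:Jder}. This is a legitimate and arguably cleaner organization of the same computation, and it has the advantage of making transparent why the real part of $(\sigma+1)\int|u|^{2\sigma}|J(t)u|^2$ drops out. The treatment of the preliminary facts (conjugation identity for $J$, factorization, weighted Gagliardo--Nirenberg via the change of unknown $g=e^{-i|x|^2/2t}f$, and the Leibniz rule \eqref{eq:Jder} including the minus sign in front of $\partial_{\bar z}F$) is all correct.

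Two small remarks. First, there is a sign slip in your sketch: using $\partial_t u=\tfrac{i}{2}\Delta u-i|u|^{2\sigma}u$ and $\RE(iz)=-\IM z$, one gets $\frac{d}{dt}\|u\|_{L^{2\sigma+2}}^{2\sigma+2}=-(\sigma+1)\IM\int|u|^{2\sigma}\bar u\,\Delta u$ (not $+$), and then integration by parts gives $+(\sigma+1)\IM\int\nabla(|u|^{2\sigma}\bar u)\cdot\nabla u$. The final reconciliation still works: expanding $\overline{J(t)u}^2=x^2\bar u^2-2itx\bar u\,\overline{\nabla u}-t^2\overline{\nabla u}^2$, the $t^0$ piece is real, the $t^1$ piece integrates by parts (via $\int x\cdot\nabla|u|^{2\sigma+2}=-d\int|u|^{2\sigma+2}$) to produce $-\frac{d\sigma t}{\sigma+1}\|u\|_{L^{2\sigma+2}}^{2\sigma+2}$, and the $t^2$ piece matches $-\frac{t^2}{\sigma+1}\frac{d}{dt}\|u\|_{L^{2\sigma+2}}^{2\sigma+2}$ exactly once one uses $\IM\!\left[|u|^{2\sigma-2}\bar u^2(\nabla u)^2\right]=-\IM\!\left[|u|^{2\sigma-2}u^2\overline{\nabla u}^2\right]$. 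Second, you correctly flag the regularization/density argument needed to pass from Schwartz data to $u\in C(\R;\Sigma)$; for the record, the cleanest way to make this rigorous is to first establish the identity in integrated form (between two times $t_1<t_2$) for $H^2\cap\Sigma$ data and then use continuous dependence in $\Sigma$ together with the local well-posedness theory.
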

For some time interval $I$,  we introduce the norm
\begin{equation}\label{eq:Xnorm}
 \|u\|_{X(I)} =\max_{A\in \{\Id,\nabla,J\}}\sup_{t\in I}\|A(t)u(t)\|_{L^2},
\end{equation}
and the space $X(I)$ defined by the finiteness of this quantity. We
emphasize that in view of the time dependence of $J$,
$X(I)=L^\infty(I;\Sigma)$ \emph{if and only if $I$ is bounded}. This
can be seen on the linear Schr\"odinger
equation (linear solutions $S(t)\phi$ are dispersive): in view of
\eqref{eq:Jcommute}, since $S(t)$ is unitary on $L^2$,
\begin{equation*}
  \|J(t)S(t)\phi\|_{L^2}= \|x\phi\|_{L^2},\quad \|\nabla
  S(t)\phi\|_{L^2}=\|\nabla \phi\|_{L^2}\Longrightarrow \|x
  S(t)\phi\|_{L^2}\Eq t \infty t\|\nabla\phi\|_{L^2}.
\end{equation*}
 We
note that if $I$ is reduced to a single element, $I=\{t_0\}$, then
$X(I)=\Sigma$, but the norm $\|\cdot \|_{X(I)}$ involves the operator
$J(t_0)$, so the norms $\|\cdot\|_\Sigma$ and $\|\cdot \|_{X(I)}$ are
  equivalent, but with comparing constants highly depending on $t_0$. We
also consider the norm
\begin{equation}
  \label{eq:Ynorm}
  \|u\|_{Y(I)}:= \max_{A\in \{\Id,\nabla,J\}}\sup_{(q,r)\text{
      admissible}}\|A u\|_{L^q(I;L^{r})},
\end{equation}
and the associated space $Y(I)$. Note
that $\|u\|_{X(I)}\le \|u\|_{Y(I)}$.

\subsection{Global solutions}

At this stage, we emphasize that the assumption $\si\ge 2/d$ implies
that for $t\ge 0$, the right hand side in \eqref{eq:pseudoconf} is
nonpositive. Together with the conservation of mass and energy, this
entails $u\in X(\R)$. This implies in
particular $u\in L^{\gamma}(\R;L^{r_0})$, as
\begin{equation}
  \label{eq:gammadeltar0}
\gamma\delta(r_0)>1.
\end{equation}
This property is indeed classical in scattering theory for
\eqref{eq:NLS}: it is equivalent to
\begin{equation*}
  \si>\frac{2-d+\sqrt{d^2+12d+4}}{4d}=:\si_*,
\end{equation*}
and $\si_*<2/d$. This parameter $\si_*$ is a standard lower bound for
$\sigma$ to prove scattering theory in $\Sigma$, see
e.g. \cite{CazCourant} (see also \cite{CW92,NakanishiOzawa} for the case $\si=\si_*$).
The standard Gagliardo-Nirenberg inequality (for
bounded $t$) and \eqref{eq:GNlibre} (for large $t$) yield
\begin{equation*}
  \|u(t)\|_{ L^{r_0}}\lesssim\<t\>^{-\delta(r_0)}
  \|u(t)\|_{L^2}^{1-\delta(r_0)} \(\|\nabla u(t)\|_{L^2}^{\delta(r_0)}
  +\|J(t)u(t)\|_{L^2}^{\delta(r_0)} \) ,
\end{equation*}
and  in view of
H\"older inequality in time,
\begin{equation}
  \label{eq:apriori-univ}
  \|u\|_{ L^{\gamma}(I;L^{r_0})}\le C\left\|\<t\>^{-\delta(r_0)}\right\|_{L^\gamma(I)}
  \|u\|_{X(I)},
\end{equation}
where $C$ does not depend on the time interval $I$.
For $A\in \{\Id,\nabla,J\}$,  Strichartz and
H\"older inequalities
(involving \eqref{eq:holder}) then yield
\begin{equation*}
 \sup_{(q,r)\text{ admissible}}\|Au\|_{L^q(I;L^{r})} \lesssim
 \|\phi\|_{\Sigma} + \|u\|_{L^\gamma(I;L^{r_0})}^{2\si} \sup_{(q,r)\text{
     admissible}}\|Au\|_{L^q(I;L^{r})} ,
\end{equation*}
and splitting $\R$ into finitely many intervals $I_j$ where
$\|u\|_{L^\gamma(I_j;L^{r_0})}$ is sufficiently small, we infer $u\in
Y(\R)$. We obtain the  following statement (see
e.g. \cite[Theorem~7.4.1]{CazCourant} or  \cite[Theorem~B]{HT87}):
\begin{theorem}\label{theo:aprioriX}
  Let $\phi\in \Sigma$, $\frac{2}{d}\le \si<\frac{2}{(d-2)_+}$. Then \eqref{eq:NLS}
  has a unique solution $u\in C(\R;\Sigma)\cap L^{q_0}_{\rm
    loc}(\R;L^{r_0})$. It satisfies $u\in Y(\R)$,
  and there exist $u_\pm\in
  \Sigma$ such that
  \begin{equation*}
    \left\|S(-t)u(t)-u_\pm\right\|_{\Sigma}\Tend t {\pm \infty} 0.
  \end{equation*}
\end{theorem}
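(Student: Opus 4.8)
The plan is to follow the by-now classical scheme for global well-posedness and scattering in $\Sigma$ (see e.g. \cite{CazCourant}), using the pseudo-conformal law of Proposition~\ref{prop:pseudoconf} as the source of global-in-time control.

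\emph{Local well-posedness in $\Sigma$.} I would run a contraction argument on the Duhamel formula \eqref{eq:duhamel} in a ball of $C([t_0-T,t_0+T];\Sigma)$ intersected with the Strichartz-type space obtained by requiring $Au\in L^q((t_0-T,t_0+T);L^r)$ for all admissible $(q,r)$ and all $A\in\{\Id,\nabla,J\}$. The point is that $\nabla$ (chain rule) and $J(t)$ (by \eqref{eq:Jder}) both act on $|u|^{2\si}u$ as first-order derivatives, so Proposition~\ref{prop:strichartz} together with \eqref{eq:holder} reduces the nonlinear estimates to making $\|u\|_{L^\gamma(I;L^{r_0})}$ small; since $\gamma<\infty$ (energy-subcriticality, \eqref{eq:gamma}), this holds on a short enough interval whose length depends only on the relevant norm of the datum at $t_0$. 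Uniqueness in $C(I;\Sigma)\cap L^{q_0}_{\rm loc}(I;L^{r_0})$ follows from the same Strichartz/H\"older estimates on small subintervals together with a continuity/Gronwall argument.

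\emph{Global existence and the bound $u\in X(\R)$.} Conservation of mass and energy gives $\sup_t\|u(t)\|_{H^1}<\infty$. Since $\si\ge 2/d$, the right-hand side of \eqref{eq:pseudoconf} is $\le 0$ for $t\ge 0$ and $\ge 0$ for $t\le 0$, so the nonnegative quantity $\tfrac12\|J(t)u(t)\|_{L^2}^2+\tfrac{t^2}{\si+1}\|u(t)\|_{L^{2\si+2}}^{2\si+2}$ is non-increasing on $[0,\infty)$ and non-decreasing on $(-\infty,0]$; hence it is bounded by its value $\tfrac12\|x\phi\|_{L^2}^2$ at $t=0$, which yields $\sup_t\|J(t)u(t)\|_{L^2}<\infty$. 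On any finite interval this forces $\|xu(t)\|_{L^2}\le\|J(t)u(t)\|_{L^2}+|t|\,\|\nabla u(t)\|_{L^2}$ to stay bounded, so $\|u(t)\|_\Sigma$ cannot blow up in finite time and the solution is global; moreover $u\in X(\R)$ with $\|u\|_{X(\R)}$ controlled by $d$, $\si$ and $\|\phi\|_\Sigma$.

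\emph{Global Strichartz bounds and scattering.} Feeding $u\in X(\R)$ into \eqref{eq:GNlibre} for large $|t|$ (and the usual Gagliardo-Nirenberg inequality near $t=0$) gives \eqref{eq:apriori-univ}, and since $\gamma\delta(r_0)>1$ by \eqref{eq:gammadeltar0} we obtain $u\in L^\gamma(\R;L^{r_0})$. Then a standard bootstrap applies: split $\R$ into finitely many intervals $I_j$ on which $\|u\|_{L^\gamma(I_j;L^{r_0})}$ is below a threshold depending only on $d,\si$; on each $I_j$, Strichartz applied to \eqref{eq:duhamel} based at the left endpoint of $I_j$, combined with \eqref{eq:holder} for $A\in\{\Id,\nabla,J\}$, absorbs the nonlinear term and bounds $\|Au\|_{L^q(I_j;L^r)}$ uniformly over admissible pairs; summing over $j$ gives $u\in Y(\R)$. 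For scattering, write $S(-t)u(t)=\phi-i\int_0^t S(-s)(|u|^{2\si}u)(s)\,ds$; using that $S$ is unitary on $L^2$, the identity $x\,S(-s)=S(-s)J(s)$, and \eqref{eq:Jder}, the increment of $S(-\cdot)u(\cdot)$ in $\Sigma$ over $[t_1,t_2]$ is controlled by $\|u\|_{L^\gamma([t_1,t_2];L^{r_0})}^{2\si}$ times a sum of $L^{q_0}([t_1,t_2];L^{r_0})$ norms of $u$, $\nabla u$, $Ju$, all finite since $u\in Y(\R)$. As $t_1,t_2\to\pm\infty$ the $L^\gamma$ tail vanishes (because $u\in L^\gamma(\R;L^{r_0})$), so $S(-t)u(t)$ is Cauchy in $\Sigma$ and converges to the desired $u_\pm$. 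The main obstacle is the global a priori step: the hypothesis $\si\ge 2/d$ is precisely what makes the pseudo-conformal functional monotone and thus bounded by its value at $t=0$, and without it the loop does not close; the remainder is a routine, if lengthy, deployment of Strichartz and weighted Gagliardo-Nirenberg estimates.
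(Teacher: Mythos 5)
Your proposal is correct and follows essentially the same route as the paper: the paper's proof of this theorem is precisely the discussion immediately preceding its statement (monotonicity of the pseudo-conformal functional from $\si\ge 2/d$ to get $u\in X(\R)$, then weighted Gagliardo–Nirenberg and $\gamma\delta(r_0)>1$ to get $u\in L^\gamma(\R;L^{r_0})$, then a finite splitting/absorption argument via Strichartz to reach $u\in Y(\R)$), with the local theory and the scattering Cauchy argument delegated to \cite{CazCourant}. You simply spell out those delegated steps, which is fine.
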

A priori, the asymptotic states $u_+$ and $u_-$ are different, even
though the relation between $u_+$ and $u_-$ remains rather
mysterious in general (see e.g. \cite{CaDPDE}).
\begin{remark}\label{rem:restriction}
The first reason why we assume $\si\ge 2/d$ instead of the more general
hypothesis $\si>\sigma_*$ ensuring scattering in $\Sigma$ is that in
view of \eqref{eq:pseudoconf}, we have the global estimate $Ju\in
L^\infty(\R;L^2(\R^d))$ as soon as $\si\ge 2/d$. On the other hand, if
$\si_*<\si<2/d$,   \eqref{eq:pseudoconf} provides only a control on
the growth (in $t$) of $t^2\|u(t)\|_{L^{2\si+2}}^{2\si+2}$ (via
Gronwall lemma), hence of $\|J(t)u\|_{L^2}$ (using
\eqref{eq:pseudoconf} again). The assumption $\si\ge 2/d$ is made
not only for this simplification: filling the gap
$\si_*<\si<2/d$ would require to modify several arguments below,
see the proofs of Theorem~\ref{theo:B} and Lemma~\ref{lem:source}.
\end{remark}
We state and prove the global in time analogue of
\cite[Theorem~B]{ChoiKoh2021}, and establish some specific global in time
integrability properties:
\begin{theorem}\label{theo:B}
  Let $1\le d\le 5$, $\frac{2}{d}\le \si<\frac{2}{(d-2)_+}$, with
  in addition  (if $d=5$) $\si\ge 1/2$.
  \begin{itemize}
  \item For any $M\ge 1$, there exists $C=C(M,d,\si)$ such that for
    any $t_0\in \R$, if
    $\phi_1,\phi_2\in \Sigma$ are initial data for $u_1$ and
    $u_2$, respectively, at time $t_0$,
    \begin{equation*}
      u_{j\mid t=t_0}=\phi_j,\quad j=1,2,
    \end{equation*}
 and are   such that
    $\|\phi_1\|_{X(\{t_0\})},\|\phi_2\|_{X(\{t_0\})}\le M$, then
    \begin{equation*}
      \|u_1-u_2\|_{Y(\R)}\le C\|\phi_1-\phi_2\|_{X(\{t_0\})}.
    \end{equation*}
 \item If $\psi\in \Sigma\cap H^2$, and $u$   solves
   \eqref{eq:NLS}, where, for $t_0\in \R$,
   $u_{\mid t=t_0}=\psi$, then for all $A\in \{\Id,\nabla,J\}$,
    \begin{equation*}
     A u^\psi \in \bigcap_{(q,r)\text{
          admissible}}L^q(\R;W^{1,r}).
    \end{equation*}
  \end{itemize}
\end{theorem}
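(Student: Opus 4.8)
The plan is to prove the two items of Theorem~\ref{theo:B} separately, both by a bootstrap argument built on Strichartz estimates, the H\"older inequality \eqref{eq:holder}, and the time-decay input \eqref{eq:apriori-univ}, which together already give the key fact that $\|u_j\|_{L^\gamma(\R;L^{r_0})}<\infty$ globally. The essential difficulty in both parts is the usual one for global-in-time estimates: the nonlinear term must be absorbed, which cannot be done on all of $\R$ at once since the small constant in Strichartz is lost; instead one splits $\R$ into finitely many subintervals on which the relevant $L^\gamma L^{r_0}$-norm is small, closes the estimate on each piece, and then chains the pieces together. The bookkeeping of the operators $\Id,\nabla,J$ through the nonlinearity, via \eqref{eq:Jder} (so that $J$ behaves like a derivative on $|u|^{2\si}u$), is routine once one knows the a priori bound $u_j\in X(\R)$ from Proposition~\ref{prop:pseudoconf} and the conservation laws.

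For the first item (Lipschitz dependence in $Y(\R)$ uniform in the base time $t_0$), I would write the Duhamel formula \eqref{eq:duhamel} for $w:=u_1-u_2$ started at $t_0$, so that $w(t)=S(t-t_0)(\phi_1-\phi_2)-i\int_{t_0}^t S(t-s)\bigl(|u_1|^{2\si}u_1-|u_2|^{2\si}u_2\bigr)(s)\,ds$. Applying $A\in\{\Id,\nabla,J\}$, using \eqref{eq:Jder} and the pointwise bound $\bigl||u_1|^{2\si}u_1-|u_2|^{2\si}u_2\bigr|\lesssim(|u_1|^{2\si}+|u_2|^{2\si})|u_1-u_2|$ together with its differentiated analogue, and then Strichartz \eqref{eq:strichnl} with the H\"older splitting \eqref{eq:holder}, I get on any subinterval $I$
\begin{equation*}
  \|w\|_{Y(I)}\lesssim \|\phi_1-\phi_2\|_{X(\{t_0\})}+\bigl(\|u_1\|_{L^\gamma(I;L^{r_0})}+\|u_2\|_{L^\gamma(I;L^{r_0})}\bigr)^{2\si}\|w\|_{Y(I)}.
\end{equation*}
Here I must be a little careful: $\|S(t-t_0)(\phi_1-\phi_2)\|_{X(I)}$ is controlled by $\|\phi_1-\phi_2\|_{X(\{t_0\})}$ because $J(t)$ commutes with $S(t)$ (Proposition~\ref{prop:pseudoconf}), which is exactly why the statement is phrased with $\|\cdot\|_{X(\{t_0\})}$ rather than $\|\cdot\|_\Sigma$ and why the constant $C$ does not depend on $t_0$. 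Since \eqref{eq:apriori-univ} gives $\|u_j\|_{L^\gamma(\R;L^{r_0})}\le C(M,d,\si)$ with $C$ independent of $t_0$ (because $\|\phi_j\|_{X(\{t_0\})}\le M$ propagates to $\|u_j\|_{X(\R)}\le C$ via the pseudoconformal law), I can chop $\R=\bigcup_{\ell} I_\ell$ into $N=N(M,d,\si)$ intervals on which $\|u_1\|_{L^\gamma(I_\ell;L^{r_0})}+\|u_2\|_{L^\gamma(I_\ell;L^{r_0})}$ is below the absorption threshold, close the estimate on each $I_\ell$ gaining $\|w\|_{Y(I_\ell)}\lesssim\|w(t_\ell)\|_{X(\{t_\ell\})}$ where $t_\ell$ is an endpoint, and iterate over the finitely many intervals, picking up a constant depending only on $M,d,\si$.

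For the second item, I would propagate $H^2$-regularity (equivalently $\langle\xi\rangle^2\widehat{u}\in L^2$, or a second application of $\nabla$) alongside the operator $J$. Differentiating \eqref{eq:NLS} twice in space, or applying $\nabla^2$, $\nabla J$, $J^2$ to Duhamel and using again \eqref{eq:Jder} and the fact that $|u|^{2\si}u$ is $C^1$ (which needs $\si>1/2$; the hypothesis $\si\ge 1/2$ in the statement when $d=5$ matches what is already imposed), I obtain a closed estimate for $\|A\nabla u^\psi\|_{L^q(I;L^r)}$ (together with $\|Au^\psi\|_{L^q(I;L^r)}$ from the first part), again of the bootstrap form ``data $+$ small factor $\times$ norm''. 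The subinterval decomposition and chaining is identical to the first item. The one genuine subtlety — and the step I expect to require the most care — is controlling the top-order term coming from $\nabla\bigl(|u|^{2\si}u\bigr)$, which is $\sim|u|^{2\si}\nabla u$: to put this in $L^{q_0'}L^{r_0'}$ I use \eqref{eq:holder} with $f=g=u$ and $h=\nabla u$, so I need $\nabla u\in L^{q_0}(I;L^{r_0})$, which is part of the conclusion of Theorem~\ref{theo:aprioriX} ($u\in Y(\R)$ includes $\nabla u\in L^{q_0}L^{r_0}$); and the genuinely second-order terms, such as $|u|^{2\si-2}(\nabla u)^2 u$ when $2\si\ge 2$ (and their $J$-analogues), which are harmless since two of the factors can still be placed in $L^\gamma L^{r_0}$ and one factor in $L^{q_0}L^{r_0}$ via \eqref{eq:holder}, but when $1<2\si<2$ the nonlinearity is only $C^1$ and one should write the estimate in terms of $\bigl||u|^{2\si-1}\bigr|$ with the H\"older exponents of \eqref{eq:holder} rather than expanding further. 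Once these terms are organized, the same finite decomposition of $\R$ and iteration closes the bound, giving $Au^\psi\in L^q(\R;W^{1,r})$ for every admissible $(q,r)$.
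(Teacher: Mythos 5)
Your treatment of the first item matches the paper's: Duhamel for $w=u_1-u_2$, the H\"older split \eqref{eq:holder} together with \eqref{eq:apriori-univ} to control $\|w\|_{L^\gamma L^{r_0}}$ through the $Y$-norm, and a finite decomposition of $\R$ governed by $\|\langle t\rangle^{-\delta(r_0)}\|_{L^\gamma}$ chained by induction. The point that $J(t)=S(t-t_0)J(t_0)S(t_0-t)$, so that the free piece is measured by $\|\phi_1-\phi_2\|_{X(\{t_0\})}$ uniformly in $t_0$, is exactly the paper's.

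For the second item there is a real gap, precisely at the step you flag as delicate. Applying $\d_j A$ to Duhamel and using \eqref{eq:holder}, the cross term from $\d_j A\bigl(|u|^{2\si}u\bigr)$ is $\sim|u|^{2\si-1}\,\d_j u\cdot Au$ and is placed as
\begin{equation*}
\|u\|_{L^\gamma(I;L^{r_0})}^{2\si-1}\,\|\d_j u\|_{L^\gamma(I;L^{r_0})}\,\|Au\|_{L^{q_0}(I;L^{r_0})}.
\end{equation*}
The factor $\|\d_j u\|_{L^\gamma L^{r_0}}$ is not a Strichartz quantity: $(\gamma,r_0)$ is not admissible, and \eqref{eq:GNlibre} applied to $\d_j u$ would require precisely the second-order $L^\infty L^2$ bounds one is trying to prove. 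You assert that ``two of the factors can still be placed in $L^\gamma L^{r_0}$,'' but one of them is $\nabla u$, and you never say how that norm is controlled. The paper handles it by a Sobolev embedding: because $d\si\ge 2$ one has $\gamma\ge q_0$, so there is $\rho$ with $(\gamma,\rho)$ admissible, and setting $s=d\bigl(\tfrac{1}{\rho}-\tfrac{1}{r_0}\bigr)=\tfrac{d\si-2}{2\si}\in[0,1)$ gives $W^{s,\rho}\hookrightarrow L^{r_0}$, hence $\|\d_j u\|_{L^\gamma L^{r_0}}\lesssim\|u\|_{L^\gamma W^{2,\rho}}$, a genuine second-order Strichartz norm that joins the bootstrap. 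This also reveals the borderline $\si=1/2$, which you do not address: there $\|u\|_{L^\gamma L^{r_0}}^{2\si-1}$ carries no decay, so smallness of the subinterval no longer absorbs the term, and the paper instead invokes $Au\in L^{q_0}(\R;L^{r_0})$ from Theorem~\ref{theo:aprioriX} together with the pair $(\gamma,\rho)$. A smaller slip: the conclusion concerns $\nabla u$, $\nabla^2 u$, $\nabla Ju$ in Strichartz spaces, so one should apply $\d_j A$ for $A\in\{\Id,\nabla,J\}$; your list ``$\nabla^2,\nabla J,J^2$'' contains $J^2$, which is not part of the claim and yields no $W^{1,r}$ information on $Ju$.
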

\begin{proof}
  For the first point,  we note that the conservation of the energy
  and the pseudoconformal conservation law \eqref{eq:pseudoconf} yield
  \begin{equation*}
    \|u_j\|_{X(\R)}\le C_1(M,d,\si),\quad j=1,2.
  \end{equation*}
  In view of (weighted) Gagliardo-Nirenberg inequality,
  \begin{equation*}
    \|u_j(t)\|_{L^{r_0}}\le \frac{C_2(M,d,\si)}{\<t\>^{\delta(r_0)}}.
  \end{equation*}
We then remark that since $J(t)=S(t)xS(-t)$ (Proposition~\ref{prop:pseudoconf}),
we have $J(t)=S(t-t_0)J(t_0)S(t_0-t)$, and Duhamel's formula becomes,
  for $A\in \{\Id,\nabla,J\}$, and $j=1,2$,
  \begin{equation*}
    A(t)u_j(t) = S(t-t_0)A(t_0)\phi_j-i\int_{t_0}^tS(t-s)A(s)\(|u_j|^{2\si}u_j\)(s)ds.
  \end{equation*}
  Considering the difference between the equations for $j=1$ and
  $j=2$, respectively, Strichartz and H\"older inequalities (like
  mentioned above) yield, if $t_0\in I$, then for all admissible $(q, r)$,
  \begin{align*}
   \|u_1-u_2\|_{L^{q}(I;L^{r})}&\lesssim
     \|\phi_1-\phi_2\|_{X(\{t_0\})} \\
    &\quad+
    \(\|u_1\|_{L^\gamma(I;L^{r_0})}^{2\si} +
    \|u_2\|_{L^\gamma(I;L^{r_0})}^{2\si}\)\|u_1-u_2\|_{L^{q_0}(I;L^{r_0})} \\
       &\lesssim
      \|\phi_1-\phi_2\|_{X(\{t_0\})} \\
      &\quad+ C_3(M,d,\si)\|\<t\>^{-\delta(r_0)}\|_{L^\gamma(I)}^{2\si}
 \|u_1-u_2\|_{L^{q_0}(I;L^{r_0})}.
  \end{align*}
When $A=\nabla$ or $J(t)$, computations are similar:
\begin{align*}
  \nabla \(|u_1|^{2\si}u_1\)- \nabla\(|u_2|^{2\si}u_2\)&= (\si+1)\( |u_1|^{2\si}\nabla u_1
  - |u_2|^{2\si}\nabla u_2\) \\
&\quad+\si\( |u_1|^{2\si-2}u_1^2\overline{\nabla u_1} -
  |u_2|^{2\si-2}u_2^2\overline{\nabla u_2}\);\\
 J(t) \(|u_1|^{2\si}u_1\)- J(t)\(|u_2|^{2\si}u_2\)&= (\si+1)\( |u_1|^{2\si} J(t) u_1
  - |u_2|^{2\si}J(t) u_2\) \\
&\quad-\si\( |u_1|^{2\si-2}u_1^2\overline{J(t) u_1} -
  |u_2|^{2\si-2}u_2^2\overline{J(t) u_2}\).
\end{align*}
We consider the first term of the right hand side, as the second term
is estimated in the same way:
\begin{align*}
  |u_1|^{2\si}Au_1  - |u_2|^{2\si}Au_2= |u_1|^{2\si}A(u_1-u_2) + \(
 |u_1|^{2\si}-|u_2|^{2\si}\)Au_2.
\end{align*}
Since $2\si\ge 1$, we have
\begin{equation*}
  \left| |u_1|^{2\si}-|u_2|^{2\si}\right|\lesssim
  \(|u_1|^{2\si-1}+|u_2|^{2\si-1}\)|u_1-u_2|.
\end{equation*}
Now we use \eqref{eq:holder} to find:
\begin{align*}
   \|A(u_1-&u_2)\|_{L^{q}(I;L^{r})}\lesssim
     \|\phi_1-\phi_2\|_{X(\{t_0\})} +\left\|A\(|u_1|^{2\si}u_1\)-
  A\(|u_2|^{2\si}u_2\)\right\|_{L^{q_0'}(I;L^{r_0'})} \\
  &\lesssim
     \|\phi_1-\phi_2\|_{X(\{t_0\})} \\
&\quad+
  \(\|u_1\|_{L^\gamma(I;L^{r_0})}^{2\si} +
    \|u_2\|_{L^\gamma(I;L^{r_0})}^{2\si}\)\|A(u_1-u_2)\|_{L^{q_0}(I;L^{r_0})}
  \\
&\quad+ \(\|u_1\|_{L^\gamma(I;L^{r_0})}^{2\si-1} +
    \|u_2\|_{L^\gamma(I;L^{r_0})}^{2\si-1}\)\|Au_2\|_{L^{q_0}(I;L^{r_0})}\|u_1-u_2\|_{L^\gamma(I;L^{r_0})}.
  \end{align*}
The last term is controlled, in view of \eqref{eq:apriori-univ}, by
\begin{equation*}
  \|u_1-u_2\|_{L^\gamma(I;L^{r_0})}\lesssim
  \left\|\<t\>^{-\delta(r_0)}\right\|_{L^\gamma(I)} \max_{B\in
    \{\Id,\nabla,J\}}\|B(u_1-u_2)\|_{L^\infty(I;L^2)}.
\end{equation*}
In view of Theorem~\ref{theo:aprioriX}, $Au_2\in L^{q_0}(\R;L^{r_0})$,
and we have:
\begin{align*}
  \max_{{A\in \{\Id,\nabla,J\}}\atop{(q,r)\,\,\,
  \text{admissible}}}&\|A(u_1-u_2)\|_{L^{q}(I;L^{r})}\lesssim
  \|\phi_1-\phi_2\|_{X(\{t_0\})}  \\
+  C_3(M,d,\si)&\|\<t\>^{-\delta(r_0)}\|_{L^\gamma(I)}^{2\si}
\max_{{A\in \{\Id,\nabla,J\}}\atop{(q,r)\,\,\,
  \text{admissible}}}\|A(u_1-u_2)\|_{L^{q}(I;L^{r})},
\end{align*}
and similarly, using Strichartz estimates again, for any $t_j\in \R$,
$I_j=\overline{[t_j,t_{j+1})} $
($t_j<t_{j+1}\le \infty$, we consider the adherence of $I_j$
to address a closed interval except if $t_{j+1}=\infty$),
\begin{align*}
  \max_{{A\in \{\Id,\nabla,J\}}\atop{(q,r)\,\,\,
  \text{admissible}}}&\|A(u_1-u_2)\|_{L^{q}(I;L^{r})}\le
  C_4\|u_1-u_2\|_{X(\{t_j\})}  \\
+  C_5(M,d,\si)&\left\|\<t\>^{-\delta(r_0)}\right\|_{L^\gamma(I)}^{2\si}
\max_{{A\in \{\Id,\nabla,J\}}\atop{(q,r)\,\,\,
  \text{admissible}}}\|A(u_1-u_2)\|_{L^{q}(I;L^{r})},
\end{align*}
where $C_{4}$ and $C_5$ is independent of $I_j$, that is,
\begin{align*}
  \|u_1-u_2\|_{Y(I)}\le
  C_4\|u_1-u_2\|_{X(\{t_j\})}
+  C_5(M,d,\si)\left\|\<t\>^{-\delta(r_0)}\right\|_{L^\gamma(I)}^{2\si}
\|u_1-u_2\|_{Y(I)}.
\end{align*}
Since
$\gamma\delta(r_0)>1$, we can split $[t_0,\infty)$ into
finitely many intervals $I_j$ such that
\begin{equation*}
   C_5(M,d,\si)\left\|\<t\>^{-\delta(r_0)}\right\|_{L^\gamma(I_j)}^{2\si}\le
   \frac{1}{2},\quad [t_0,\infty)=\bigcup_{j=0}^K I_j,
\end{equation*}
and then, for $j\ge 1$,
\begin{align*}
 \|u_1-u_2\|_{Y(I_j)} &\le C_4
\|u_1-u_2\|_{X(\{t_j\})}+\frac{1}{2}
   \|u_1-u_2\|_{Y(I_j)}\\
&\le 2  C_4\|u_1-u_2\|_{X(\{t_j\})}\le 2C_4 \|u_1-u_2\|_{Y(I_{j-1})} \\
&\le (2C_4)^{j+1}\|\phi_1-\phi_2\|_{X(\{t_0\})},
\end{align*}
by induction. Since $j\le K$, we infer
\begin{equation*}
  \|u_1-u_2\|_{Y([t_0,\infty))}\le C(M,d,\si)\|\phi_1-\phi_2\|_{X(\{t_0\})},
\end{equation*}
and the same obviously holds on $(-\infty,t_0]$. Hence the first point
of the theorem is established.
\smallbreak

  In order to prove the second point, we do not follow the same
  strategy as in \cite{CazCourant}, where the $H^2$ regularity of $u$
  is read from the equation \eqref{eq:NLS}, after it is proved that
  $\d_t u$ is in $L^2$ (this strategy does not require the
  nonlinearity to be more than $C^1$, and $\si>0$ is allowed).
  Since $\si\ge 1/2$, we may differentiate \eqref{eq:NLS} twice in
  space, instead of once in time.

  We check that since $d\si\ge 2$, $\gamma$, defined in
  \eqref{eq:gamma}, is such that $\gamma\ge q_0$, as
  \begin{equation*}
    \frac{1}{q_0}-\frac{1}{\gamma} = \frac{d\si-2}{4\si}.
  \end{equation*}
  Therefore, there exists $\rho\ge 2$ such that $(\gamma,\rho)$ is
  admissible, and
  \begin{equation*}
    d\(\frac{1}{\rho}- \frac{1}{r_0}\) =
    d\(\frac{1}{\rho}-\frac{1}{2}+\frac{1}{2}- \frac{1}{r_0}\)=
    \frac{2}{q_0}-\frac{2}{\gamma} = \frac{d\si-2}{2\si}=:s.
  \end{equation*}
  We infer that $W^{s,\rho}\hookrightarrow L^{r_0}$, with
  $s$ defined above, which is such that $s\in [0,1[$ since $2/d\le
  \si<2/(d-2)_+$. We note that for $A=J$, $\d_j$ and $A$ do not
  commute (but the commutation bracket is of order zero, and is therefore
  harmless in the estimates). Applying $A\in \{\Id,\nabla,J\}$ to
  the Duhamel's formula \eqref{eq:duhamel}, then $\d_j$, Strichartz
  estimates yield, for $t_0\in I$,
  \begin{equation}\label{eq:order2}
    \|\d_j A u\|_{L^{q}(I;L^{r})} \lesssim \|\d_j
    A(t_0)u(t_0)\|_{L^2} + \left\|\d_jA \(|u|^{2\si}u\)\right\|_{L^{q_0'}(I;L^{r_0'})}.
  \end{equation}
Now H\"older inequality entails
  \begin{align*}
    \left\|\d_jA \(|u|^{2\si}u\)\right\|_{L^{q_0'}(I;L^{r_0'})}& \lesssim
     \|u\|_{L^\gamma(I;L^{r_0})}^{2\si} \|\d_j A u\|_{L^{q_0}(I;L^{r_0})} \\
      &\quad +
        \|u\|_{L^\gamma(I;L^{r_0})}^{2\si-1}\|\d_j
        u\|_{L^\gamma(I;L^{r_0})}\|A u\|_{L^{q_0}(I;L^{r_0})}  \\
    &\lesssim \|u\|_{L^\gamma(I;L^{r_0})}^{2\si}
    \|\d_j A u\|_{L^{q_0}(I;L^{r_0})}   \\
    &\quad +
        \|u\|_{L^\gamma(I;L^{r_0})}^{2\si-1}\|\nabla
      u\|_{L^\gamma(I;W^{s,\rho})}\|Au\|_{L^{q_0}(I;L^{r_0})}   \\
       &\lesssim \|u\|_{L^\gamma(I;L^{r_0})}^{2\si}
      \|\d_j A u\|_{L^{q_0}(I;L^{r_0})}   \\
    &\quad +
        \|u\|_{L^\gamma(I;L^{r_0})}^{2\si-1}\|
      u\|_{L^\gamma(I;W^{2,\rho})}\|Au\|_{L^{q_0}(I;L^{r_0})}  .
  \end{align*}
If $\si>1/2$, we use the same idea as for the first point of the
theorem, namely
\begin{equation*}
\|u(t)\|_{L^\gamma(I; L^{r_0})}\lesssim \|\<t\>^{-\delta(r_0)}\|_{L^\gamma(I)},
\end{equation*}
and split $\R$ into finitely many intervals such that the last term
in \eqref{eq:order2} can be absorbed by the left hand side on each of
them. We conclude by (finite) induction like before.
If $\si=1/2$, we use in addition the property $Au\in
L^{q_0}(\R;L^{r_0})$ (from Theorem~\ref{theo:aprioriX}), and conclude
similarly, by considering Strichartz estimates involving other
admissible pairs, among them $(\gamma,\rho)$.
 \end{proof}

\subsection{Estimating the source term}

Suppose that $u$ solves \eqref{eq:NLS}, and $v$ solves
\begin{equation*}
 i\d_t v+\frac{1}{2}\Delta v = i\frac{N(\tau)-\Id}{\tau}v,\quad v_{\mid t=0}=\phi,
\end{equation*}
then Taylor formula shows that $w=u-v$ solves
\begin{equation*}
  i\d_t w+\frac{1}{2}\Delta w = i\frac{N(\tau)-\Id}{\tau}u-i
  \frac{N(\tau)-\Id}{\tau}v + r_w,
\end{equation*}
where the source term is controlled pointwise by $|r_w|\lesssim
|u|^{4\si+1}$. In terms of estimates, it is as if the power $2\si$ in
\eqref{eq:NLS} had been replaced by $4\si$, which may no longer
correspond to an energy-subcritical nonlinearity. This difficulty is
overcome thanks to the introduction of the frequency cutoff
$\Pi_\tau$.
The following lemma is a global in time version of
\cite[Lemma~2.8]{ChoiKoh2021}, and the proof resumes many of its ingredients:

\begin{lemma}\label{lem:source}
There exists $C$ independent of $\tau$ and the
time interval $I$
such that
    \begin{equation*}
    \left\| |\Pi_{\tau} u|^{4\si+1} \right\|_{L^{q_0'} (I; L^{r_0'})}
    + \left\| |\Pi_{\tau} u|^{2\si} \Pi_{\tau} (|u|^{2\si} u) \right\|_{L^{q_0'} (I; L^{r_0'})}
    \le C \tau^{-1/2} \|u\|_{Y(I)}^{4\si+1},
  \end{equation*}
  where $Y(I)$ is defined by \eqref{eq:Ynorm}.
\end{lemma}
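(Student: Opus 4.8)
The strategy is to estimate each of the two terms separately, using the frequency cutoff $\Pi_\tau$ to convert a loss of derivatives into a loss of a power of $\tau$, and then to use the dispersive decay encoded in the $Y(I)$-norm to control the remaining high-power nonlinear quantity. For the first term $\||\Pi_\tau u|^{4\si+1}\|_{L^{q_0'}(I;L^{r_0'})}$, I would first apply a Bernstein-type inequality for the Littlewood--Paley projector $\Pi_\tau$: since $\Pi_\tau$ localizes to frequencies $|\xi|\lesssim \tau^{-1/2}$, one has $\|\Pi_\tau f\|_{L^\infty}\lesssim \tau^{-d/4}\|\Pi_\tau f\|_{L^2}$, or more generally $\|\Pi_\tau f\|_{L^p}\lesssim \tau^{-\frac d2(\frac1q-\frac1p)}\|f\|_{L^q}$ for $q\le p$. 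The point is to peel off enough factors of $\Pi_\tau u$ in $L^\infty$ (each contributing a power of $\tau^{-d/4}$ against an $L^2$ norm) so that the remaining factors land in the admissible Strichartz spaces controlled by $Y(I)$. One must bookkeep the exponents so that the total power of $\tau^{-1}$ comes out to exactly $-1/2$; this is the same balancing already used in \cite[Lemma~2.8]{ChoiKoh2021}, and the energy-subcriticality $\si<2/(d-2)_+$ together with $d\le 5$ is what makes the arithmetic close.

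For the second term $\||\Pi_\tau u|^{2\si}\Pi_\tau(|u|^{2\si}u)\|_{L^{q_0'}(I;L^{r_0'})}$, I would treat the two clusters of factors asymmetrically. The factor $\Pi_\tau(|u|^{2\si}u)$ can be handled by first discarding $\Pi_\tau$ (it is bounded on every $L^r$, uniformly in $\tau$, by Lemma~\ref{lem:cutoff} or a standard Mikhlin argument) and then estimating $|u|^{2\si}u$ itself by Hölder as in \eqref{eq:holder}, at the cost of $\|u\|_{L^\gamma(I;L^{r_0})}^{2\si}\|u\|_{L^{q_0}(I;L^{r_0})}$, which is bounded by $\|u\|_{Y(I)}^{2\si+1}$; but that already uses up $2\si+1$ of the $4\si+1$ factors, leaving the $2\si$ factors of $\Pi_\tau u$ to be put in $L^\infty$ via Bernstein, producing $\tau^{-d\si/2}$ times $\|u\|_{L^\infty(I;L^2)}^{2\si}$. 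Since $d\si\ge 2$ this exponent is worse than $-1/2$, so one cannot afford to put all $2\si$ remaining factors in $L^\infty$; instead I would distribute: put only enough $\Pi_\tau u$ factors in $L^\infty$ to get $\tau^{-1/2}$ exactly, and route the others through the Gagliardo--Nirenberg/Strichartz norms of $Y(I)$ together with $\Pi_\tau(|u|^{2\si}u)$, using a multilinear Hölder in both space and time. The cleaner route, which I expect is what the paper does, is to treat both summands by a single scheme: estimate $|\Pi_\tau u|^{4\si+1}$ and $|\Pi_\tau u|^{2\si}\,|u|^{2\si}u$ (after removing the harmless $\Pi_\tau$) by the same interpolation, since in $L^{r_0'}$-norms the two expressions have the same homogeneity $4\si+1$ in $u$ and the same number of $\Pi_\tau$-localized factors available to absorb the derivative loss.

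The main obstacle, and the place requiring the most care, is the exponent accounting: one must choose, for each term, how many factors of $\Pi_\tau u$ go into $L^\infty_x$ (each costing $\tau^{-d/4}$) versus into the Strichartz pair $L^{q_0}(I;L^{r_0})$ or the auxiliary $L^\gamma(I;L^{r_0})$ (costing nothing in $\tau$ but consuming the $Y(I)$-budget), and simultaneously how the time exponents add up to $q_0'$ — the Hölder relations $\frac1{r_0'}=\frac{2\si+1}{r_0}$ and \eqref{eq:gamma} are exactly the identities that make this consistent. A secondary technical point is that these bounds must be uniform in the time interval $I$ (including unbounded $I$), so no constant may depend on $|I|$; this is automatic here because every step is either a pointwise/Bernstein inequality or a Hölder inequality in the already-time-integrated norms, with no appeal to the finiteness of $I$. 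Once the exponent bookkeeping is fixed, assembling the estimate is routine, and one concludes by summing the two contributions.
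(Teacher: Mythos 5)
Your high-level strategy is the right one --- use Bernstein to trade the frequency localization $\Pi_\tau$ for a power of $\tau$, then estimate the remaining $4\si+1$-linear expression in norms controlled by $Y(I)$, and reduce both terms of the lemma to a single quantity. But there are two genuine gaps in the details, and they are exactly where the exponent accounting that you defer as ``routine'' actually bites.

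First, the version of the first-term estimate you lead with, peeling off factors of $\Pi_\tau u$ in $L^\infty$ at cost $\tau^{-d/4}$ each, requires putting an integer number $k$ of factors in $L^\infty$ with $k = 2/d$; this only works for $d\in\{1,2\}$. What the paper does instead is apply Bernstein \emph{uniformly} to each of the $4\si+1$ factors: after Hölder in time one is reduced to $\|\Pi_\tau u\|_{L^{(4\si+1)q_0'}(I;L^{(4\si+1)r_0'})}^{4\si+1}$, and one applies $\|\Pi_\tau u\|_{L^{(4\si+1)r_0'}}\lesssim \tau^{\frac{d}{2}(\frac{1}{(4\si+1)r_0'}-\frac{1}{r_1})}\|u\|_{L^{r_1}}$ for a carefully chosen $r_1<(4\si+1)r_0'$, namely $\frac{1}{r_1}=\frac{1}{4\si+1}\big(\frac{2\si+1}{2\si+2}+\frac1d\big)$, tuned so the accumulated power of $\tau$ is exactly $-1/2$. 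Crucially, the pair $(q_2,r_1)$ with $q_2=(4\si+1)q_0'$ is \emph{not} admissible, so $\|u\|_{L^{q_2}(I;L^{r_1})}$ is not directly controlled by $Y(I)$. The missing step is a Sobolev embedding: one finds an admissible pair $(q_2,r_2)$ and $s\in(0,1)$ with $W^{s,r_2}\hookrightarrow L^{r_1}$, so that $\|u\|_{L^{q_2}(I;L^{r_1})}\lesssim \|u\|_{L^{q_2}(I;W^{1,r_2})}\le\|u\|_{Y(I)}$. Verifying $r_1\ge 2$, that $(q_2,r_2)$ is admissible, and that $s\in(0,1)$ is where $d\le 5$, $\si\ge 2/d$ and $\si<2/(d-2)_+$ enter; none of this appears in your plan.

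Second, for $|\Pi_\tau u|^{2\si}\Pi_\tau(|u|^{2\si}u)$ you propose to discard $\Pi_\tau$ from the second factor as ``harmless.'' This does not work: after the Hölder split, the second factor must be measured in $L^{\frac{4\si+1}{2\si+1}r_0'}$, which by Hölder would require $u\in L^{(4\si+1)r_0'}$, an exponent far beyond what $H^1$ (or $Y(I)$) controls in $d\ge 2$. The $\Pi_\tau$ on $|u|^{2\si}u$ is precisely what rescues the estimate: Bernstein applied to this frequency-localized factor lowers the Lebesgue exponent from $\frac{4\si+1}{2\si+1}r_0'$ down to $\frac{r_1}{2\si+1}$ at the cost of a negative power of $\tau$, and after Hölder on $|u|^{2\si}u$ every factor ends up in $L^{r_1}$, so the two terms of the lemma coalesce into the same expression $\tau^{-1/2}\|u\|_{L^{q_2}(I;L^{r_1})}^{4\si+1}$. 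Your instinct that there is a unified scheme is correct, but the mechanism is Bernstein applied to \emph{both} $\Pi_\tau$-localized factors with coordinated exponents, not removal of $\Pi_\tau$ from one of them.
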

\begin{proof}
By Lemma~\ref{lem:bernstein} and H\"older inequality, we have
    \begin{equation}\label{eq:source-step1}
    \left\| |\Pi_{\tau} u|^{4\si+1} \right\|_{L^{q_0'} (I; L^{r_0'})}
    \le C_{d,\si} \tau^{-\frac{d}{2} (\frac{4\si+1}{r_1}-\frac{1}{r_0'})} \| u \|_{L^{(4\si+1)q_0'} (I; L^{r_1})}^{4\si+1}
    \end{equation}
for all $r_1 \le (4\si+1)r_0'$.
The value of $r_1 >0$ is chosen as
\[
    \frac{1}{r_1} =\frac{1}{4\si+1} \( \frac{2\si+1}{2\si+2}
    +\frac{1}{d} \) >\frac{1}{4\si+1} \frac{2\si+1}{2\si+2}=\frac{1}{(4\si+1)r_0'} .
  \]
   We check that in view of \eqref{eq:sigma}, $r_1\ge 2$. By definition,
   $r_1\ge 2$ if and only
  if
  \[
    d\si(4\si+3)\ge 2\si+2.
   \]
But $d\si\ge 2$, so the above inequality is satisfied. For $d=1,2$, we
do not have to check anything else. Introduce
 now $q_2$ and $r_2$ given by
 \[
    \frac{1}{q_2} := \frac{1}{(4\si+1)q_0'} =
    \frac{4(\si+1)-d\si}{4(\si+1)(4\si+1)}\quad \text{and}\quad
    \frac{1}{r_2} :=\frac{1}{2}-\frac{2}{dq_2}.
  \]
We check that for $d\le 5$, $1/q_2>0$, and $1/q_2<1/2$ since $\si\ge
2/d$: $q_2\in (2,\infty)$, and if $d=1$, $q_2>4$, hence, by definition
of $r_2$, $(q_2,r_2)$ is admissible. Sobolev embedding reads
\begin{equation*}
  W^{s,r_2}\hookrightarrow L^{r_1},\quad
  \frac{1}{r_1}+\frac{s}{d}= \frac{1}{r_2}\Longleftrightarrow
  s=\frac{d}{2} - \frac{d+6}{8\si+2}.
\end{equation*}
We note that for $\frac{2}{d}\le \si<\frac{2}{(d-2)_+}$, $s\in (0,1)$,
hence \eqref{eq:source-step1} entails
\begin{align*}
  \left\| |\Pi_{\tau} u|^{4\si+1} \right\|_{L^{q_0'} (I; L^{r_0'})} &
   \lesssim\tau^{-1/2} \| u \|_{L^{q_2} (I; L^{r_1})}^{4\si+1}\lesssim \tau^{-1/2} \| u \|_{L^{q_2} (I; W^{s,r_2})}^{4\si+1}\\
 &\lesssim \tau^{-1/2} \| u \|_{L^{q_2} (I;
   W^{1,r_2})}^{4\si+1}\lesssim \tau^{-1/2}\|u\|^{4\si+1}_{Y(I)},
\end{align*}
and the first estimate of the lemma follows.
\smallbreak

For the second term of left hand side of the lemma,
 H\"older inequality yields:
\begin{align*}
    \left\| |\Pi_{\tau} u|^{2\si} \Pi_{\tau} (|u|^{2\si} u) \right\|_{L^{q_0'} (I; L^{r_0'})}
    &\le \left\| \Pi_{\tau} u \right\|_{L^{(4\si+1)q_0'} (I;
      L^{(4\si+1)r_0'})}^{2\si}\\
    &\quad\times
        \left\| \Pi_{\tau} (|u|^{2\si} u)
      \right\|_{L^{\frac{4\si+1}{2\si+1}q_0'} (I;
      L^{\frac{4\si+1}{2\si+1}r_0'})}.
\end{align*}
In view of Lemma~\ref{lem:bernstein}, again since $r_1 <(4\si+1)r_0'$,
\begin{equation*}
  \left\| \Pi_{\tau} u \right\|_{L^{(4\si+1)q_0'} (I;
      L^{(4\si+1)r_0'})} \lesssim
    \tau^{\frac{d}{2}\(\frac{1}{(4\si+1)r_0'}-\frac{1}{r_1}\)}\left\|u
    \right\|_{L^{(4\si+1)q_0'} (I;  L^{r_1})} .
  \end{equation*}
  Still from Lemma~\ref{lem:bernstein},
  \begin{equation*}
    \left\| \Pi_{\tau} (|u|^{2\si} u)
      \right\|_{L^{\frac{4\si+1}{2\si+1}q_0'} (I;
      L^{\frac{4\si+1}{2\si+1}r_0'})}\lesssim
    \tau^{\frac{d}{2}\(\frac{2\si+1}{(4\si+1)r_0'}-\frac{2\si+1}{r_1}\)}
    \left\| |u|^{2\si} u
      \right\|_{L^{\frac{4\si+1}{2\si+1}q_0'} (I;
      L^{\frac{r_1}{2\si+1}})}.
  \end{equation*}
  Therefore,
  \begin{align*}
    \left\| |\Pi_{\tau} u|^{2\si} \Pi_{\tau} (|u|^{2\si} u)
    \right\|_{L^{q_0'} (I; L^{r_0'})}&
  \lesssim \tau^{\frac{d}{2} \(\frac{1}{r_0'}-\frac{4\si+1}{r_1}\)}\| u
                                       \|_{L^{(4\si+1)q_0'} (I; L^{r_1})}^{2\si} \\
    &\quad\times
        \left\| |u|^{2\si} u \right\|_{L^{\frac{4\si+1}{2\si+1}q_0'} (I; L^{\frac{r_1}{2\si+1}})} \\
    &\lesssim\tau^{-1/2} \| u \|_{L^{(4\si+1)q_0'} (I;
      L^{r_1})}^{4\si+1}= \tau^{-1/2} \| u \|_{L^{q_2} (I;
      L^{r_1})}^{4\si+1},
  \end{align*}
where we have used H\"older inequality for the last estimate, and we
are back to the situation of the first case.
\end{proof}


\section{Preliminary estimates: the numerical solution}
\label{sec:numerical}

The discrete (in time) counterpart of the $L^q_tL^r$ norms involved
in the standard Strichartz estimates is:
\begin{equation*}
      \|u\|_{\ell^q (n\tau \in I;\, L^r )}
    = \Big( \tau \sum_{n\tau \in I} \|u(n \tau)\|_{L^r }^q \Big)^{1/q}.
\end{equation*}
For $a,n\in \N$, $a+1\le n$, the discrete Duhamel formula associated
to $Z_\tau$ reads:
\begin{equation}\label{eq:discrete-duhamel}
  \begin{aligned}
    Z_{\tau}(n \tau) &= S_{\tau}((n-a) \tau)Z_\tau(a\tau)+ \tau
\sum_{k=a}^{n-1} S_{\tau}(n\tau -k \tau) \frac{N(\tau) -
  \Id}{\tau} Z_{\tau}(k\tau),
\end{aligned}
\end{equation}
which is compared to the formula \eqref{eq:duhamel}. We now list
analogues of Proposition~\ref{prop:strichartz} in the discrete (in
time) setting, a framework which is not quite as classical.

\begin{proposition}[{\cite[Theorem 2.1]{Ignat2011}}]\label{prop:strichartz-ignat}
Let $(q,r)$,  $(q_1,r_1)$ and $(q_2, r_2)$ be any admissible
pairs. Then, there exist $C_{d,q}, C_{d,q_1,q_2}>0$ such that
    \begin{equation}\label{eq:discr-stri-homo}
    \|S_{\tau}(\cdot) \phi \|_{\ell^q (\tau \Z; L^r )}
    \le C_{d,q}  \|\phi \|_{L^2 },
    \end{equation}
and
    \begin{equation}\label{eq:disc-stri-inhom}
    \left\| \tau \sum_{k=-\infty}^{n-1}S_{\tau}\((n-k) \tau\) f (k\tau)\right\|_{\ell^{q_1} (\tau \Z; L^{r_1} )}
    \le C_{d,q_1,q_2} \|f\|_{\ell^{q_2'}(\tau \Z; L^{r_2'} )}
    \end{equation}
hold for all $\phi\in L^2 $ and $f\in \ell^{q_2'}(\tau
\Z; L^{r_2'})$.
\end{proposition}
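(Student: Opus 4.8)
The plan is to follow the classical $TT^{*}$ scheme of Keel and Tao, transported to the discrete time grid $\tau\Z$. The only genuinely new input is a dispersive estimate for $S_{\tau}$ which the frequency truncation $\Pi_{\tau}$ makes uniform near $t=0$; everything else is a discrete transcription of the continuous Strichartz machinery, combined with a Christ--Kiselev truncation to reach the retarded sum in \eqref{eq:disc-stri-inhom}.

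\emph{Discrete dispersive estimate.} Writing $S_{\tau}(t)\phi=K_{t,\tau}*\phi$ with $\widehat{K_{t,\tau}}(\xi)=e^{-it|\xi|^{2}/2}\chi(\tau^{1/2}\xi)$, I would rescale $\eta=\tau^{1/2}\xi$ to reduce $K_{t,\tau}(x)$ to $\tau^{-d/2}$ times the fixed oscillatory integral $\int e^{i\tau^{-1/2}x\cdot\eta-i(t/2\tau)|\eta|^{2}}\chi(\eta)\,d\eta$. For $|t|\le\tau$ this is bounded by $\|\chi\|_{L^{1}}$; for $|t|\ge\tau$, stationary and non-stationary phase in $\eta$ (the Hessian of the phase is $-(t/\tau)$ times the identity, and $\chi$ is smooth with compact support, so there are no boundary contributions) yield the factor $|t/\tau|^{-d/2}$. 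Hence $\|K_{t,\tau}\|_{L^{\infty}}\lesssim(\tau+|t|)^{-d/2}$, so $\|S_{\tau}(t)\phi\|_{L^{\infty}}\lesssim(\tau+|t|)^{-d/2}\|\phi\|_{L^{1}}$; interpolating with $\|S_{\tau}(t)\|_{L^{2}\to L^{2}}\le1$ gives $\|S_{\tau}(t)\phi\|_{L^{r}}\lesssim(\tau+|t|)^{-\delta(r)}\|\phi\|_{L^{r'}}$ for all $2\le r\le\infty$, and in particular $\|S_{\tau}(n\tau)\phi\|_{L^{r}}\lesssim\tau^{-\delta(r)}\<n\>^{-\delta(r)}\|\phi\|_{L^{r'}}$ for $n\in\Z$.

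\emph{The $TT^{*}$ arguments.} For an admissible pair $(q,r)$ with $q<\infty$, define $T\colon L^{2}\to\ell^{q}(\tau\Z;L^{r})$ by $T\phi=(S_{\tau}(n\tau)\phi)_{n}$; the case $q=\infty$, $r=2$ is trivial. As $\Pi_{\tau}$ is self-adjoint and commutes with $S(t)$, one has $S_{\tau}(n\tau)^{*}=S_{\tau}(-n\tau)$ and $TT^{*}f(n\tau)=\tau\sum_{m}S((n-m)\tau)\Pi_{\tau}^{2}f(m\tau)$, whose kernel enjoys the same bound as above. Since \eqref{eq:discr-stri-homo} is equivalent to $\|TT^{*}\|_{\ell^{q'}(\tau\Z;L^{r'})\to\ell^{q}(\tau\Z;L^{r})}\lesssim1$, and $\|TT^{*}f(n\tau)\|_{L^{r}}\lesssim\tau^{1-\delta(r)}\sum_{m}\<n-m\>^{-\delta(r)}\|f(m\tau)\|_{L^{r'}}$, it suffices to invoke the discrete weak-type Young inequality on $\Z$: since $(q,r)$ is admissible, $\delta(r)q=2$, so $(\<n\>^{-\delta(r)})_{n}\in\ell^{q/2,\infty}(\Z)$, and convolution by an $\ell^{q/2,\infty}$ sequence maps $\ell^{q'}(\Z)\to\ell^{q}(\Z)$ for $2<q<\infty$; checking that the powers of $\tau$ match finishes the homogeneous estimate. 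For \eqref{eq:disc-stri-inhom}, applying this to two admissible pairs shows that the non-retarded operator $f\mapsto\tau\sum_{m}S((n-m)\tau)\Pi_{\tau}^{2}f(m\tau)=T_{1}T_{2}^{*}f$ is bounded $\ell^{q_{2}'}(\tau\Z;L^{r_{2}'})\to\ell^{q_{1}}(\tau\Z;L^{r_{1}})$; since admissible non-endpoint pairs always satisfy $q_{2}'<2<q_{1}$ (with $q_{2}'\le4/3$ and $q_{1}\ge4$ when $d=1$), the discrete Christ--Kiselev lemma upgrades this to the retarded sum $\sum_{m<n}$, which is precisely \eqref{eq:disc-stri-inhom} (for $q_{1}=\infty$ the retarded bound also follows directly).

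\emph{Main obstacle.} The crux is the dispersive estimate and, with it, the fact that it is the frequency localization that rescues the scheme: without $\Pi_{\tau}$ the kernel of $S(n\tau)$ decays only like $|n\tau|^{-d/2}$ and discrete Strichartz estimates genuinely fail in dimension $d\ge2$ (as recalled in the introduction); it is the uniform bound $\|K_{t,\tau}\|_{L^{\infty}}\lesssim\tau^{-d/2}$ near $t=0$, coming from the $O(\tau^{-1/2})$ frequency support, that repairs the summability over the grid. A secondary, milder subtlety is that admissible pairs saturate the discrete Young inequality, forcing the use of weak-type spaces, which is also the reason the Strichartz endpoint $q=2$ must be excluded here.
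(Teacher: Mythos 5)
The paper does not actually prove this proposition: it is quoted verbatim from \cite[Theorem 2.1]{Ignat2011}, so there is no in-text proof to compare against. Your reconstruction follows the same route as that reference, and the key mechanism --- the uniform dispersive bound $\|S_\tau(t)\phi\|_{L^\infty}\lesssim(\tau+|t|)^{-d/2}\|\phi\|_{L^1}$ made possible by the frequency truncation, followed by the $TT^*$ reduction, the discrete fractional Young/Hardy--Littlewood--Sobolev inequality (with the correct observation that admissibility $2/q=\delta(r)$ is exactly what makes the exponents and powers of $\tau$ close), and a discrete Christ--Kiselev step for retardation --- is correctly identified. The rescaling computation for the kernel and the identification of the failure mode without $\Pi_\tau$ are also on point.

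There is one imprecision you should fix in the inhomogeneous step. The operator $T_1T_2^*$ you bound has kernel $S((n-m)\tau)\Pi_\tau^2$, i.e.\ the truncation appears \emph{squared}, whereas the operator in \eqref{eq:disc-stri-inhom} is $\tau\sum_{k<n}S_\tau((n-k)\tau)f(k\tau)$ with kernel $S((n-k)\tau)\Pi_\tau$, a \emph{single} truncation. Since $\chi$ is a smooth bump and not an idempotent, $\chi^2\neq\chi$, these are genuinely different Fourier multipliers, and the bound you obtain is not ``precisely \eqref{eq:disc-stri-inhom}.'' The cleanest repair is the identity $\Pi_\tau=\Pi_\tau\Pi_{\tau/4}$ (valid because $\chi\equiv1$ on $B(0,1)$ and $\chi$ is supported in $B(0,2)$; this identity is in fact used in the paper in the proof of Corollary~\ref{cor:stri-disc}), which lets you write $S_\tau((n-m)\tau)=S_\tau(n\tau)\,\Pi_{\tau/4}S(-m\tau)$ and hence the non-retarded operator as $T_1\widetilde T_2^{\,*}$ with $\widetilde T_2\phi(n)=S(n\tau)\Pi_{\tau/4}\phi$; the kernel $S(n\tau)\Pi_{\tau/4}$ obeys the same grid dispersive bound up to a bounded constant, so $\widetilde T_2$ satisfies the same homogeneous estimate and the rest of your argument goes through unchanged. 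Alternatively, one can run the kernel/convolution bound directly on $S_\tau(t)$ (which your dispersive estimate already covers), but then the off-diagonal inhomogeneous pairs require bilinear real interpolation rather than being obtained by duality from the homogeneous estimate. With either adjustment the proof is complete.
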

The following version of Bernstein lemma is
borrowed from \cite{ChoiKoh2021}:

\begin{lemma}{\cite[Lemma~2.6]{ChoiKoh2021}}\label{lem:bernstein}
For any $1\le q \le r <\infty$ and $\phi:\R^d\rightarrow\mathbb{C}$, we have
    \begin{equation}\label{eq-2-20}
    \big\| \Pi_{\tau}\phi - \phi \big\|_{L^{r} }
    \le C\tau^{1/2} \big\| (-\Delta)^{1/2} \phi \big\|_{L^{r} } ,
    \end{equation}
    \begin{equation}\label{eq-2-20'}
    \| \Pi_{\tau} \phi\|_{L^r }
    \le C \|\phi\|_{L^r } ,
    \end{equation}
\begin{equation}\label{eq-2-21}
    \big\| \nabla (\Pi_{\tau} \phi) \big\|_{L^r }
    \le C \tau^{-\frac{1}{2}} \| \phi \|_{L^r },
    \end{equation}
    and
    \begin{equation}\label{eq-2-22}
    \|\Pi_{\tau} \phi \|_{L^{r}}
    \le C \tau^{\frac{d}{2} \left( \frac{1}{r}-\frac{1}{q}\right)} \| \phi\|_{L^{q}}.
    \end{equation}
\end{lemma}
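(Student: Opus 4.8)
The whole statement reduces to one observation: $\Pi_\tau$ is convolution against a smooth, well-localized kernel, after which every inequality is Young's inequality plus a scaling computation. Write $K:=\mathcal{F}^{-1}\chi$; then by \eqref{eq:cut-off} we have $\Pi_\tau\phi=K_\tau\ast\phi$ with $K_\tau(x)=\tau^{-d/2}K(\tau^{-1/2}x)$, and the normalization $\chi\equiv1$ near $0$ gives $\int_{\R^d}K_\tau=\chi(0)=1$. Since $\chi\in C^k(\R^d)$ is compactly supported and $k>1+d/2$, one has $K\in L^1\cap L^\infty$, indeed $\|K\|_{L^\infty}\lesssim\|\chi\|_{L^1}$, while $K\in L^1$ follows from Cauchy--Schwarz, $\|K\|_{L^1}\le\|\langle x\rangle^{-s}\|_{L^2}\|\langle x\rangle^{s}K\|_{L^2}$ for some $s\in(d/2,k]$, the last factor being $\simeq\|\chi\|_{H^s}<\infty$; the same argument (with no extra smoothness cost) gives $\nabla K=\mathcal{F}^{-1}(i\xi\chi(\xi))\in L^1$, and with one extra weight, $\langle x\rangle K\in L^1$ — this last point being exactly where one spends one derivative of $\chi$, hence the hypothesis $k>1+d/2$. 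Interpolating $L^1\cap L^\infty$ then yields $K\in L^p$ for every $1\le p\le\infty$.

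With this kernel picture, \eqref{eq-2-20'} and \eqref{eq-2-22} are immediate. For $1\le q\le r<\infty$ set $\tfrac1p:=1+\tfrac1r-\tfrac1q\in[0,1]$, so that $p\ge1$ precisely because $q\le r$; Young's inequality gives $\|\Pi_\tau\phi\|_{L^r}=\|K_\tau\ast\phi\|_{L^r}\le\|K_\tau\|_{L^p}\|\phi\|_{L^q}$, and the elementary scaling identity $\|K_\tau\|_{L^p}=\tau^{\frac d2(\frac1r-\frac1q)}\|K\|_{L^p}$ produces exactly the power of $\tau$ in \eqref{eq-2-22}; taking $q=r$ (so $p=1$, $\tau^0$) gives \eqref{eq-2-20'}. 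For the derivative estimate \eqref{eq-2-21} one differentiates under the convolution: $\nabla\Pi_\tau\phi=(\nabla K_\tau)\ast\phi$ with $\nabla K_\tau(x)=\tau^{-(d+1)/2}(\nabla K)(\tau^{-1/2}x)$, hence $\|\nabla K_\tau\|_{L^1}=\tau^{-1/2}\|\nabla K\|_{L^1}$, and Young's inequality closes it.

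For the consistency bound \eqref{eq-2-20} I would exploit $\int K_\tau=1$ to write $\Pi_\tau\phi(x)-\phi(x)=\int_{\R^d}K_\tau(y)\bigl(\phi(x-y)-\phi(x)\bigr)dy$ and bound $|\phi(x-y)-\phi(x)|\le|y|\int_0^1|\nabla\phi(x-\theta y)|\,d\theta$; Minkowski's inequality in $L^r$ then controls the left-hand side of \eqref{eq-2-20} by $\bigl(\int|y|\,|K_\tau(y)|\,dy\bigr)\|\nabla\phi\|_{L^r}=\tau^{1/2}\bigl\||z|\,K(z)\bigr\|_{L^1}\|\nabla\phi\|_{L^r}\lesssim\tau^{1/2}\|\nabla\phi\|_{L^r}$ by the weighted-$L^1$ bound above, after which $\|\nabla\phi\|_{L^r}\simeq\|(-\Delta)^{1/2}\phi\|_{L^r}$ for $1<r<\infty$ by boundedness of the Riesz transforms (this being the range of $r$ actually used in the sequel). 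A frequency-side variant works just as well: from $\chi(\tau^{1/2}\xi)-1=\tau^{1/2}\int_0^1\nabla\chi(\theta\tau^{1/2}\xi)\cdot\xi\,d\theta$ one obtains $(\Pi_\tau-\Id)\phi=\tau^{1/2}\sum_j\int_0^1(\partial_j\chi)(\theta\tau^{1/2}D)\,\partial_j\phi\,d\theta$, each operator $(\partial_j\chi)(\theta\tau^{1/2}D)$ being convolution against a kernel of $L^1$-norm $\|\mathcal{F}^{-1}(\partial_j\chi)\|_{L^1}$ uniformly in $\theta,\tau$, finite because $\partial_j\chi\in C^{k-1}$ is compactly supported with $k-1>d/2$. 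None of this is deep; the only point needing care — bookkeeping rather than a genuine obstacle — is to track the powers of $\tau$ correctly through the rescaling $x\mapsto\tau^{-1/2}x$ and to recognize that \eqref{eq-2-20} is the single estimate that costs one weight (equivalently one derivative) on $K$, which is what forces the smoothness assumption $k>1+d/2$ rather than mere continuity of $\chi$.
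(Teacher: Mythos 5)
The paper does not prove this lemma at all — it cites \cite[Lemma~2.6]{ChoiKoh2021} verbatim and moves on — so there is no ``paper's proof'' to compare against; what you have supplied is a self-contained proof that the cited source would presumably give in roughly the same spirit. Your argument is correct: the reduction of $\Pi_\tau$ to convolution with the rescaled kernel $K_\tau(x)=\tau^{-d/2}K(\tau^{-1/2}x)$, $K=\F^{-1}\chi$, the Young/scaling derivation of \eqref{eq-2-20'}, \eqref{eq-2-21}, \eqref{eq-2-22}, and the mean-value (equivalently, symbol-side Taylor) argument for \eqref{eq-2-20} are all sound, and your bookkeeping of where each derivative of $\chi$ is spent (one to get $\nabla K\in L^1$ at no extra cost beyond $k>d/2$, one genuine extra derivative to get the weight $|x|K\in L^1$, yielding the threshold $k>1+d/2$) correctly identifies the role of the smoothness hypothesis on $\chi$.

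One point worth making explicit, which you flag only parenthetically: your derivation of \eqref{eq-2-20} passes through $\|\nabla\phi\|_{L^r}$ and then invokes $L^r$-boundedness of the Riesz transforms to reach $\|(-\Delta)^{1/2}\phi\|_{L^r}$, so it delivers \eqref{eq-2-20} for $1<r<\infty$ but not for $r=1$. The lemma as stated allows $r=1$ (taking $q=r$), so a fully faithful proof of the displayed statement would need either to restrict the range for \eqref{eq-2-20} to $r>1$ (which is harmless, since in this paper \eqref{eq-2-20} is only ever applied with $r=2$ or $r=r_0'>1$), or to prove directly that the symbol $a(\eta)=(\chi(\eta)-1)/|\eta|$ has integrable inverse Fourier transform — a statement that is not immediate, since $a$ is not compactly supported and decays only like $|\eta|^{-1}$ at infinity. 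Since the endpoint is never used, this is a presentational caveat rather than a real gap, but it deserves to be stated rather than buried in a parenthesis. Everything else — including the alternative frequency-side derivation of \eqref{eq-2-20} via $\chi(\tau^{1/2}\xi)-1=\tau^{1/2}\int_0^1\nabla\chi(\theta\tau^{1/2}\xi)\cdot\xi\,d\theta$, which in fact is the same computation the paper itself performs inside the proof of Lemma~\ref{lem:cutoff} — is correct and well organized.
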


An important technical novelty compared both with previous studies of
\eqref{eq:NLS},
and with error estimates for time discretization, is that since we
consider the vectorfield $J$ in the presence of the truncated
propagator $S_\tau$, we face a lack of commutation: $\Pi_\tau$
commutes with $\Id$ and $\nabla$, but not with $J$. This lack of
commutation turns out to be controlled thanks to the following lemma:
\begin{lemma}\label{lem:cutoff}
  For any $1< p<\infty$, there  exists $C_p>0$ such that
  \begin{equation}
    \label{eq:cutoffJ}
    \left\|J(t)\Pi_\tau\phi- \Pi_\tau J(t)\phi\right\|_{L^p}\le
  C_p\tau^{1/2} \|\phi\|_{L^p},\quad \forall \phi\in\Sigma,\quad\forall t\in \R.
  \end{equation}
\end{lemma}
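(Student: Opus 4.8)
The plan is to compute the commutator $[J(t),\Pi_\tau]$ explicitly on the Fourier side and recognize it as a nice Fourier multiplier times a translation-like operator, then bound it using a Mikhlin–type multiplier theorem. Recall $J(t)=x+it\nabla$. The operator $\nabla$ commutes with $\Pi_\tau$ since both are Fourier multipliers, so only the $x$ part contributes: $[J(t),\Pi_\tau]=[x,\Pi_\tau]$ (no $t$-dependence, which is consistent with the statement). On the Fourier side, multiplication by $x_j$ corresponds to $i\partial_{\xi_j}$, so for each coordinate,
\begin{equation*}
  \widehat{[x_j,\Pi_\tau]\phi}(\xi)
  = i\partial_{\xi_j}\!\big(\chi(\tau^{1/2}\xi)\widehat\phi(\xi)\big)
  - \chi(\tau^{1/2}\xi)\, i\partial_{\xi_j}\widehat\phi(\xi)
  = i\tau^{1/2}(\partial_j\chi)(\tau^{1/2}\xi)\,\widehat\phi(\xi).
\end{equation*}
Thus $[x_j,\Pi_\tau]=\tau^{1/2}\,m_j(\tau^{1/2}D)$, where $m_j(\xi)=i(\partial_j\chi)(\xi)$, i.e.\ it is $\tau^{1/2}$ times a Fourier multiplier with symbol $m_j$ dilated at scale $\tau^{-1/2}$.

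The remaining point is to show that the multiplier $m_j(\tau^{1/2}D)$ is bounded on $L^p$ for $1<p<\infty$, uniformly in $\tau$. Since $L^p$ boundedness of a Fourier multiplier is invariant under dilations of the symbol, it suffices to check that $m_j=i\partial_j\chi$ is an $L^p$-multiplier. Here is where the regularity assumption $\chi\in C^k(\R^d)$ with $k>1+d/2$ enters: $\partial_j\chi\in C^{k-1}(\R^d)$ with $k-1>d/2$, and it is compactly supported, hence $\partial_j\chi\in H^{k-1}(\R^d)\hookrightarrow$ the space of $L^p$ Fourier multipliers. Concretely, a compactly supported function in $C^{\lceil d/2\rceil+1}$ satisfies the Mikhlin–Hörmander condition (all derivatives up to order $\lfloor d/2\rfloor+1$ are bounded, and decay at infinity is automatic by compact support), so $m_j(D)$ is bounded on $L^p$ with a constant $C_p$ depending only on $p$, $d$ and finitely many derivatives of $\chi$. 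Combining with the dilation invariance gives $\|m_j(\tau^{1/2}D)\|_{L^p\to L^p}\le C_p$ uniformly in $\tau$.

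Putting the pieces together, for each $t$ and $\tau$,
\begin{equation*}
  \big\|J(t)\Pi_\tau\phi-\Pi_\tau J(t)\phi\big\|_{L^p}
  = \Big\|\sum_{j=1}^d \tau^{1/2} m_j(\tau^{1/2}D)(x_j\text{-free part})\Big\|
  \le \tau^{1/2}\sum_{j=1}^d \|m_j(\tau^{1/2}D)\phi\|_{L^p}
  \le C_p\,\tau^{1/2}\|\phi\|_{L^p},
\end{equation*}
which is the claimed estimate (the careful bookkeeping is simply that $[J(t),\Pi_\tau]\phi=\sum_j \tau^{1/2}m_j(\tau^{1/2}D)\phi$). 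I expect the only real content to be the verification that $\partial_j\chi$ is an $L^p$ Fourier multiplier, i.e.\ the invocation of the Mikhlin–Hörmander theorem and the bookkeeping on how many derivatives of $\chi$ are needed — this is exactly what pins down the hypothesis $k>1+d/2$. The algebraic identification of the commutator is immediate; the dilation-invariance of multiplier norms is standard; so there is no genuine obstacle, only the need to state the multiplier theorem in the form actually used.
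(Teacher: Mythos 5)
Your proof is correct and follows essentially the same route as the paper: compute the commutator on the Fourier side via the product rule to identify $[J(t),\Pi_\tau]\phi$ as $\tau^{1/2}$ times a Fourier multiplier with symbol $\nabla\chi(\tau^{1/2}\cdot)$, then invoke dilation invariance and a Mikhlin--H\"ormander (or $L^1$-kernel) argument, using the regularity assumption on $\chi$ to get uniform $L^p$ boundedness. The only cosmetic blemish is the penultimate display, where ``$(x_j\text{-free part})$'' is not meaningful as written, but you correct the bookkeeping immediately afterwards, so the argument stands.
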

\begin{proof}
  The Fourier transform of $\Pi_\tau J(t)\phi$ is given by
  \begin{align*}
    \F \( \Pi_\tau J(t)\phi\)(\xi) &= \chi\(\tau^{1/2}\xi\)\( i\nabla
                                     \hat \phi(\xi) -t\xi \hat \phi(\xi)\)\\
    & = i\nabla \big( \chi\big(\tau^{1/2}\xi\big)\hat \phi(\xi)\big) -t\xi
      \chi\big(\tau^{1/2}\xi\big) \hat \phi(\xi)-i\tau^{1/2} (\nabla
      \chi)\big(\tau^{1/2}\xi\big)\hat \phi(\xi) \\
    & =\F\( J(t)\Pi_\tau \phi\) - i\tau^{1/2} (\nabla
      \chi)\(\tau^{1/2}\xi\)\hat \phi(\xi) .
  \end{align*}
  The lemma follows from the boundedness of $\nabla \chi$ and basic Fourier
  multiplier theory, provided that $\nabla \chi\in C^k$ with $k>d/2$
  (see e.g. \cite[p.~96]{Stein70}).
\end{proof}

Since the operators $S_{\tau}$ and $\nabla$ commute, the following
corollary is immediate in the case $A\in \{\Id,\nabla\}$, but requires
more work in the case $A=J$.

\begin{corollary}\label{cor:stri-disc}
Let $(q,r)$, $(q_1,r_1)$ and $(q_2,r_2)$ be any admissible
pairs. Then, there exist $C_{d,q}, C_{d,q_1,q_2}>0$ such that for any
$t_0\in \tau\Z$ and
$A\in \{\Id,\nabla\}$,
    \begin{equation}\label{eq-st-7}
    \|AS_{\tau}(\cdot-t_0) \phi \|_{\ell^q (\tau \Z; L^r)}
    \le C_{d,q}  \|A\phi \|_{L^2},
    \end{equation}
and
    \begin{equation}\label{eq-st-8}
    \left\| \tau A\sum_{k=-\infty}^{n-1}S_{\tau}((n-k) \tau) f (k\tau)\right\|_{\ell^{q_1} (\tau \Z ;L^{r_1})}
    \le C_{d,q_1,q_2} \|A f\|_{\ell^{q_2'}(\tau \Z; L^{r_2'} )}
    \end{equation}
hold for all $\phi\in \Sigma$ and  $f$ such that $Af\in
\ell^{q_2'}(\tau \Z; L^{r_2'})$.
While for $A=J$, we have
\begin{align}
&\|J(\cdot)S_{\tau}(\cdot-t_0) \phi \|_{\ell^q (\tau \Z; L^r)}
    \le C_{d,q} \(\tau^{1/2}\|\phi\|_{L^2}+\|J(t_0)\phi \|_{L^2}\),\label{Jineq1}\\
&\left\| \tau J(n\tau)\sum_{k=-\infty}^{n-1}S_{\tau}((n-k) \tau) f (k\tau)\right\|_{\ell^{q_1} (\tau \Z ;L^{r_1})}\nonumber\\
&\quad    \le C_{d,q_1,q_2} \(\tau^{1/2}\|f\|_{\ell^{q_2'}(\tau \Z; L^{r_2'} )}+ \|J f\|_{\ell^{q_2'}(\tau \Z; L^{r_2'} )}\).\label{Jineq2}
    \end{align}
\end{corollary}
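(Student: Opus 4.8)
The cases $A\in\{\Id,\nabla\}$ follow immediately: since $\nabla$ (and trivially $\Id$) commutes with $S_\tau$ and with $\Pi_\tau$, one simply applies Proposition~\ref{prop:strichartz-ignat} to $A\phi$ in place of $\phi$ (for \eqref{eq-st-7}) and to $Af$ in place of $f$ (for \eqref{eq-st-8}), noting that $AS_\tau(\cdot-t_0)\phi=S_\tau(\cdot-t_0)(A\phi)$ and $AS_\tau((n-k)\tau)f=S_\tau((n-k)\tau)(Af)$. The homogeneous estimate is stated with a shift $t_0\in\tau\Z$, but $S_\tau(\cdot-t_0)\phi=S_\tau(\cdot)(\Pi_\tau\phi)$ evaluated on the shifted lattice, and since \eqref{eq:discr-stri-homo} holds on all of $\tau\Z$ with a constant independent of the reference point, translation invariance of the $\ell^q(\tau\Z;L^r)$ norm gives the claim.

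For $A=J$ the obstruction is precisely the lack of commutation between $J(t)$ and $\Pi_\tau$, which is the technical novelty flagged before Lemma~\ref{lem:cutoff}. The plan is to commute $J$ past $S_\tau$ — which IS allowed, since $J(t)=S(t-t_0)J(t_0)S(t_0-t)$ by Proposition~\ref{prop:pseudoconf}, so for $t\in\tau\Z$,
\[
  J(t)S_\tau(t-t_0)\phi = J(t)S(t-t_0)\Pi_\tau\phi = S(t-t_0)J(t_0)\Pi_\tau\phi
  = S_\tau(t-t_0)\big(\Pi_\tau^{-1}\!\big)\cdots
\]
— more carefully: $J(t)S(t-t_0)\Pi_\tau\phi=S(t-t_0)\big(J(t_0)\Pi_\tau\phi\big)$, and now one inserts Lemma~\ref{lem:cutoff}: $J(t_0)\Pi_\tau\phi=\Pi_\tau J(t_0)\phi+R$ with $\|R\|_{L^2}\le C\tau^{1/2}\|\phi\|_{L^2}$. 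Thus
\[
  J(t)S_\tau(t-t_0)\phi = S_\tau(t-t_0)\big(J(t_0)\phi\big) + S(t-t_0)R.
\]
Applying \eqref{eq:discr-stri-homo} to the first term (with data $J(t_0)\phi$) and to the second term (with data $R$, using $\|R\|_{L^2}\le C\tau^{1/2}\|\phi\|_{L^2}$, and noting $S(t-t_0)R=S_\tau(t-t_0)R$ up to the cutoff already present — or simply that $S$ on the full lattice also satisfies discrete Strichartz via $S_\tau$ applied to $\Pi_\tau R$, at the cost of $\|\Pi_\tau R\|_{L^2}\le\|R\|_{L^2}$) yields \eqref{Jineq1}. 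For \eqref{Jineq2} one argues identically on each Duhamel term: $J(n\tau)S_\tau((n-k)\tau)f(k\tau)=S_\tau((n-k)\tau)(Jf)(k\tau)+S((n-k)\tau)R_k$ with $\|R_k\|_{L^2}\lesssim\tau^{1/2}\|f(k\tau)\|_{L^2}$; the main term is handled by \eqref{eq:disc-stri-inhom} applied to $Jf$, and the remainder $\tau\sum_k S((n-k)\tau)R_k$ by \eqref{eq:disc-stri-inhom} applied to $(R_k)_k$, whose $\ell^{q_2'}(\tau\Z;L^{r_2'})$ norm is $\lesssim\tau^{1/2}\|f\|_{\ell^{q_2'}(\tau\Z;L^{r_2'})}$ — here one must check $R_k$ is controlled in $L^{r_2'}$, not just $L^2$, so Lemma~\ref{lem:cutoff} should be invoked with $p=r_2'$ (which is why it is stated for general $1<p<\infty$), giving $\|R_k\|_{L^{r_2'}}\le C_{r_2'}\tau^{1/2}\|f(k\tau)\|_{L^{r_2'}}$.

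The main obstacle is bookkeeping the remainder terms in the right norms: Lemma~\ref{lem:cutoff} produces an $L^p$-bounded error with a $\tau^{1/2}$ gain, and one must feed it into the discrete Strichartz machinery at the correct endpoint ($L^2$ for the homogeneous piece, $L^{r_2'}$ for the inhomogeneous piece), then absorb the stray $S(\cdot)$ (as opposed to $S_\tau(\cdot)$) by re-inserting a harmless $\Pi_\tau$ or by the trivial bound $\|S(t)g\|_{L^2}=\|g\|_{L^2}$ combined with the fact that the remainder is already a single term summed against the discrete kernel. No new analytic difficulty arises beyond Lemma~\ref{lem:cutoff} and Proposition~\ref{prop:strichartz-ignat}; the content is the algebraic identity $J(t)S_\tau(t-t_0)=S_\tau(t-t_0)J(t_0)+(\text{error})$ and its Duhamel analogue.
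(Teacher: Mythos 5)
Your decomposition for $A=J$ is a genuine alternative to the paper's and, once patched, it works — but there is a real gap in how you dispose of the remainder. Commuting $J$ past the full propagator first, $J(t)S(t-t_0)\Pi_\tau\phi = S(t-t_0)J(t_0)\Pi_\tau\phi$, and then applying Lemma~\ref{lem:cutoff} once to $J(t_0)\Pi_\tau\phi$ gives a \emph{single} remainder $S(t-t_0)R$ with $R=[J(t_0),\Pi_\tau]\phi$, which is cleaner than the paper's decomposition (the paper inserts $\Pi_\tau=\Pi_\tau\Pi_{\tau/4}$ at the outset and produces two commutator terms, $[J(t),\Pi_\tau]S_{\tau/4}(t-t_0)\phi$ and $S_\tau(t-t_0)[J(t_0),\Pi_{\tau/4}]\phi$). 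However, the remainder in your version sits behind the \emph{untruncated} propagator $S(t-t_0)$, which does \emph{not} satisfy discrete-in-time Strichartz estimates — this was the whole reason for introducing $\Pi_\tau$ in the first place. Your two suggested fixes are both off: replacing $S(t-t_0)R$ by $S_\tau(t-t_0)R=S(t-t_0)\Pi_\tau R$ changes the function, since $\widehat R(\xi)=-i\tau^{1/2}(\nabla\chi)(\tau^{1/2}\xi)\hat\phi(\xi)$ is supported precisely in the annulus $1\le\tau^{1/2}|\xi|\le 2$ where $\chi$ is \emph{not} identically $1$, so $\Pi_\tau R\ne R$; and the ``trivial bound $\|S(t)g\|_{L^2}=\|g\|_{L^2}$'' only handles the pair $(\infty,2)$, not general admissible $(q,r)$.

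The correct one-line repair is to note that $\widehat R$ is supported in $\{|\xi|\le 2\tau^{-1/2}\}$, so $\Pi_{\tau/4}R=R$ exactly (since $\chi(\tau^{1/2}\xi/2)\equiv 1$ there), whence $S(t-t_0)R=S_{\tau/4}(t-t_0)R$; now \eqref{eq:discr-stri-homo} applies with parameter $\tau/4$, giving $\|S_{\tau/4}(\cdot-t_0)R\|_{\ell^q(\tau\Z;L^r)}\lesssim\|R\|_{L^2}\lesssim\tau^{1/2}\|\phi\|_{L^2}$, and similarly for the inhomogeneous case with $\|R_k\|_{L^{r_2'}}\lesssim\tau^{1/2}\|f(k\tau)\|_{L^{r_2'}}$ via Lemma~\ref{lem:cutoff} at $p=r_2'$. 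The paper avoids this subtlety by inserting the extra factor $\Pi_{\tau/4}$ \emph{before} commuting, so that every propagator that appears is already a truncated one and the remainders land directly inside $S_\tau$ or $S_{\tau/4}$; this costs an extra commutator term but makes the applicability of discrete Strichartz automatic. Your route buys one fewer remainder term at the price of having to observe the frequency localization of $R$ explicitly — which you must do, because as written the argument has a hole.
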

\begin{proof}
For $A=\{\Id, \nabla\}$, \eqref{eq-st-7} and \eqref{eq-st-8} follow
directly from Proposition \ref{prop:strichartz-ignat} by
noticing that $A$ commutes with $S_\tau$. On the other hand, $J$ does
not commute with $S_\tau$.  In view of the properties of $\chi$, we
have the identity
\begin{equation*}
  \Pi_\tau = \Pi_\tau\Pi_{\tau/4},\quad \forall \tau>0.
\end{equation*}
Therefore, using the standard notation $[A,B]=AB-BA$,
\begin{align*}
J(t)S_\tau(t-t_0)\phi&=J(t)S_\tau(t-t_0)\Pi_{\tau/4} \phi\\
&=[J(t),\Pi_\tau]S(t-t_0)\Pi_{\tau/4}\phi +\Pi_\tau J(t)S(t-t_0)\Pi_{\tau/4}\phi\\
&=[J(t),\Pi_\tau]S_{\tau/4}(t-t_0)\phi +\Pi_\tau J(t)S(t-t_0)\Pi_{\tau/4}\phi.
\end{align*}
Since
$J(t)=S(t)xS(-t)=S(t-t_0)J(t_0)S(t_0-t)$,  the last term is equal to
\begin{align*}
  \Pi_\tau J(t)S(t-t_0)\Pi_{\tau/4}\phi& = \Pi_\tau
 S(t-t_0)J(t_0)\Pi_{\tau/4}\phi= S_\tau (t-t_0) J(t_0)\Pi_{\tau/4}\phi\\
& = S_\tau (t-t_0) [J(t_0),\Pi_{\tau/4}]\phi + S_\tau (t-t_0) \Pi_{\tau/4}J(t_0)\phi\\
& = S_\tau (t-t_0) [J(t_0),\Pi_{\tau/4}]\phi + S_\tau (t-t_0) J(t_0)\phi.
\end{align*}
We infer
\begin{align*}
&\|J(\cdot)S_{\tau}(\cdot-t_0) \phi \|_{\ell^q (\tau \Z; L^r)}\\
&\quad\le \|S_\tau(\cdot-t_0)J(t_0)\phi\|_{\ell^q (\tau \Z; L^r)}+
\|[J(\cdot),\Pi_\tau] S_{\tau/4}(\cdot -t_0)\phi\|_{\ell^q (\tau \Z; L^r)}\\
&\qquad+\|S_\tau (\cdot -t_0) [J(t_0),\Pi_{\tau/4}]\phi\|_{\ell^q (\tau \Z; L^r)} \\
&\quad\le C_{d,q}\|J(t_0)\phi\|_{L^2}+C\tau^{1/2}
\|S_{\tau/4}(\cdot-t_0)\phi\|_{\ell^q (\tau \Z; L^r)}+ C_{d,q}\|[J(t_0),\Pi_{\tau/4}]\phi\|_{L^2}\\
&\quad\le \tilde C_{d, q}\(\tau^{1/2}\|\phi\|_{L^2}+\|J(t_0)\phi\|_{L^2}\),
\end{align*}
where we have used \eqref{eq:discr-stri-homo} and \eqref{eq:cutoffJ}.
Similarly, \eqref{Jineq2} can be established by applying \eqref{eq:disc-stri-inhom} and \eqref{eq:cutoffJ}.
\end{proof}
Propositions~\ref{prop:strichartz} and \ref{prop:strichartz-ignat} and
Christ-Kiselev lemma imply this slight extension (to incorporate $J$)
of \cite[Lemma~4.5]{Ignat2011}:

\begin{corollary}[{\cite[Lemma 4.5]{Ignat2011}}]
For any admissible pairs $(q_1,r_1)$ and $(q_2,r_2)$, we have, for
$A\in \{\Id,\nabla\}$,
    \begin{equation}\label{eq-2-6}
    \left\| A\int_{s < n \tau} S_{\tau}(n \tau -s) f(s) ds
    \right\|_{\ell^{q_1} (\tau \Z; L^{r_1} )}
    \le C_{d,q_1,q_2} \left\| Af\right\|_{L^{q_2'} (\R; L^{r_2'})},
    \end{equation}
and
    \begin{equation}\label{eq-2-6bis}
      \begin{aligned}
            \left\| J(n\tau)\int_{s < n \tau} S_{\tau}(n \tau -s) f(s) ds
    \right\|_{\ell^{q_1} (\tau \Z; L^{r_1} )}
    &\le \\
\le C_{d,q_1,q_2} \Big(\tau^{1/2}\|  f\|_{L^{q_2'} (\R; L^{r_2'})}&
    +\left\| J f\right\|_{L^{q_2'} (\R; L^{r_2'})}\Big).
      \end{aligned}
    \end{equation}
\end{corollary}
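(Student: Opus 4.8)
The plan is to treat the cases $A\in\{\Id,\nabla\}$ and $A=J$ separately, the first being essentially \cite[Lemma~4.5]{Ignat2011} and the second requiring the commutator bookkeeping already set up in Corollary~\ref{cor:stri-disc}. For $A=\Id$ I would first dispose of the \emph{untruncated} operator $\widetilde T f(n\tau):=\int_{\R}S_\tau(n\tau-s)f(s)\,ds$. Since $\Pi_\tau$, $S$ and $\nabla$ are all Fourier multipliers, $\widetilde Tf(n\tau)=S_\tau(n\tau)\,g$ with $g:=\int_{\R}S(-s)f(s)\,ds$; the dual of the homogeneous Strichartz estimate (Proposition~\ref{prop:strichartz}~(1)) gives $\|g\|_{L^2}\lesssim\|f\|_{L^{q_2'}(\R;L^{r_2'})}$, and the discrete homogeneous estimate \eqref{eq:discr-stri-homo} gives $\|S_\tau(\cdot)g\|_{\ell^{q_1}(\tau\Z;L^{r_1})}\lesssim\|g\|_{L^2}$, so $\widetilde T:L^{q_2'}(\R;L^{r_2'})\to\ell^{q_1}(\tau\Z;L^{r_1})$ is bounded with a $\tau$-independent norm. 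Because $(q_1,r_1)$ and $(q_2,r_2)$ are admissible one has $q_1>2$ and $q_2>2$ (resp. $\ge 4$ when $d=1$), hence the strict inequality $q_2'<q_1$, and the Christ--Kiselev lemma — applied to the Banach-valued kernel $s\mapsto S_\tau(n\tau-s)$ over the ordered measure spaces $(\R,ds)$ and $(\tau\Z,\tau\,\#)$ — upgrades this to the causal truncation, which is exactly \eqref{eq-2-6} for $A=\Id$. For $A=\nabla$ one notes that $\nabla$ commutes with $S_\tau$ and differentiates under the integral, so \eqref{eq-2-6} follows by applying the $\Id$ case to $\nabla f$.

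For $A=J$ the point is that $J$ does not commute with $\Pi_\tau$, and I would reproduce the splitting from the proof of Corollary~\ref{cor:stri-disc}: using $\Pi_\tau=\Pi_\tau\Pi_{\tau/4}$ (hence $S_\tau\Pi_{\tau/4}=S_\tau$) together with $J(n\tau)S(n\tau-s)=S(n\tau-s)J(s)$, one obtains, for each $s<n\tau$,
\[
 J(n\tau)S_\tau(n\tau-s)f(s)=[J(n\tau),\Pi_\tau]\,S_{\tau/4}(n\tau-s)f(s)+S_\tau(n\tau-s)\,[J(s),\Pi_{\tau/4}]f(s)+S_\tau(n\tau-s)\,J(s)f(s).
\]
Integrating over $s<n\tau$ and taking $\ell^{q_1}(\tau\Z;L^{r_1})$-norms, the first term is handled by pulling out the ($s$-independent) commutator, gaining $\tau^{1/2}$ from Lemma~\ref{lem:cutoff} in $L^{r_1}$, and bounding the remaining norm of $\int_{s<n\tau}S_{\tau/4}(n\tau-s)f(s)\,ds$ by \eqref{eq-2-6} at cut-off scale $\tau/4$ (the constant there depends only on $d,q_1,q_2$, and passing from the grid $(\tau/4)\Z$ to its sub-grid $\tau\Z$ only decreases the norm); this contributes $\lesssim\tau^{1/2}\|f\|_{L^{q_2'}(\R;L^{r_2'})}$. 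The second term is estimated by applying \eqref{eq-2-6} (case $A=\Id$) to $s\mapsto[J(s),\Pi_{\tau/4}]f(s)$ and then Lemma~\ref{lem:cutoff} in $L^{r_2'}$, again giving $\lesssim\tau^{1/2}\|f\|_{L^{q_2'}(\R;L^{r_2'})}$. The third term is \eqref{eq-2-6} (case $A=\Id$) applied to $s\mapsto J(s)f(s)=(Jf)(s)$, giving $\lesssim\|Jf\|_{L^{q_2'}(\R;L^{r_2'})}$. Adding the three contributions yields \eqref{eq-2-6bis}.

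I do not expect a genuine obstacle here: once the Christ--Kiselev lemma and Lemma~\ref{lem:cutoff} are available, everything is bookkeeping of the cut-off scales $\tau$ versus $\tau/4$ and of the order-zero commutators, exactly as in Corollary~\ref{cor:stri-disc}. The two points that must be checked are that the Christ--Kiselev step genuinely uses the strict inequality $q_2'<q_1$ (valid for admissible pairs away from the endpoint), and that Lemma~\ref{lem:cutoff} is invoked only with exponents $r_1$ and $r_2'$ lying in $(1,\infty)$; this is automatic for $r_2'$ and holds for $r_1$ for all admissible pairs needed in the sequel, the extreme $d=1$ case $r=\infty$ playing no role in what follows.
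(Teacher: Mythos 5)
Your proof is correct and follows precisely the strategy the paper intends: the discrete inhomogeneous bound \eqref{eq-2-6} via the untruncated operator, Strichartz duality, and Christ--Kiselev over the mixed continuous/discrete measure spaces (with the required strict inequality $q_2'<q_1$ automatic for admissible pairs, since $q_2'<2<q_1$ for $d\ge 2$ and $q_2'\le 4/3<4\le q_1$ for $d=1$), and the $J$ case via the exact commutator splitting at cut-off scales $\tau$ and $\tau/4$ already used in the proof of Corollary~\ref{cor:stri-disc}, combined with Lemma~\ref{lem:cutoff}. One minor imprecision: passing from the grid $(\tau/4)\Z$ to its sub-grid $\tau\Z$ does not, with the $\tau$-weighted $\ell^{q_1}$ normalization \eqref{eq:ellqLr}, \emph{decrease} the norm --- the per-point weight grows from $\tau/4$ to $\tau$, so the $\ell^{q_1}(\tau\Z)$-norm can exceed the $\ell^{q_1}((\tau/4)\Z)$-norm by a factor $4^{1/q_1}$; since this is uniformly bounded in $\tau$ the estimate is unaffected, but the wording should be corrected.
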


\begin{lemma}{\cite[Lemma~2.5]{ChoiKoh2021}}\label{lem:propNL}
There exists  $c>0$ such that
    \begin{equation}\label{eq:NLlip}
    \left| \frac{N(\tau) - \Id}{\tau} v - \frac{N(\tau) - \Id}{\tau}w \right|
    \le c\(|v|^{2\si} + |w|^{2\si}\)|v-w|
    \end{equation}
and
    \begin{equation}\label{eq:NLpuissance}
    \left| \frac{N(\tau)-\Id}{\tau} v\right|
    = \left| \frac{\exp(-i \tau \lambda|v|^{2\si}) -1}{\tau} v\right| \le |v|^{2\si+1}
    \end{equation}
hold for all $v, w \in \mathbb{C}$.
Furthermore, for weakly differentiable $f : \R^d\to \C$, we have the pointwise estimate
    \begin{equation}\label{eq:gradientNL}
    \left| \nabla \left( \frac{N(\tau) - \Id}{\tau} f \right) \right| \le (2\si+1) |f|^{2\si} |\nabla f|.
    \end{equation}
\end{lemma}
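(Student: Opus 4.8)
The plan is to prove the three pointwise inequalities separately; each reduces to an elementary fiberwise (in $x$) estimate for the map $z\mapsto z\,e^{i\tau\lambda|z|^{2\si}}$ on $\C$, and the only analytic input needed is the pair of bounds $|e^{i\theta}-1|\le|\theta|$ and $|e^{i\theta_1}-e^{i\theta_2}|\le|\theta_1-\theta_2|$ for real $\theta,\theta_1,\theta_2$. First, \eqref{eq:NLpuissance}: the displayed equality is just the definition of $N(\tau)$ (with $\lambda=-1$ for \eqref{eq:NLS}), and $|e^{i\tau\lambda|v|^{2\si}}-1|\le\tau|v|^{2\si}$ gives $\big|\tau^{-1}(N(\tau)-\Id)v\big|\le|v|^{2\si}|v|=|v|^{2\si+1}$ at once. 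For \eqref{eq:gradientNL}, write $\tau^{-1}(N(\tau)-\Id)f=H(|f|^2)f$ with $H(s)=\tau^{-1}\big(e^{i\tau\lambda s^\si}-1\big)$, so that $|H(s)|\le s^\si$ and $|H'(s)|=\si s^{\si-1}$. Since $2\si\ge1$ in the range considered here (this is where the extra assumption $\si\ge1/2$ enters), $z\mapsto H(|z|^2)z$ is $C^1$ on $\C$ with differential bounded by $(2\si+1)|z|^{2\si}$, so the chain rule holds a.e. for weakly differentiable $f$:
\[
  \nabla\!\big(H(|f|^2)f\big)=H(|f|^2)\nabla f+2H'(|f|^2)\RE\!\big(\bar f\,\nabla f\big)\,f,
\]
and bounding the two summands using $\big|\RE(\bar f\nabla f)\big|\le|f|\,|\nabla f|$ yields $\big|\nabla(\tau^{-1}(N(\tau)-\Id)f)\big|\le(2\si+1)|f|^{2\si}|\nabla f|$.

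The substantive point is \eqref{eq:NLlip}, which I would obtain from the decomposition
\[
  \frac{N(\tau)-\Id}{\tau}v-\frac{N(\tau)-\Id}{\tau}w
  =\frac{v}{\tau}\Big(e^{i\tau\lambda|v|^{2\si}}-e^{i\tau\lambda|w|^{2\si}}\Big)
  +\frac{e^{i\tau\lambda|w|^{2\si}}-1}{\tau}(v-w).
\]
The second term is bounded by $|w|^{2\si}|v-w|$ exactly as in \eqref{eq:NLpuissance}. For the first term, $\big|e^{i\tau\lambda|v|^{2\si}}-e^{i\tau\lambda|w|^{2\si}}\big|\le\tau\big||v|^{2\si}-|w|^{2\si}\big|$, and the elementary inequality $\big||v|^{2\si}-|w|^{2\si}\big|\le 2\si\big(|v|^{2\si-1}+|w|^{2\si-1}\big)|v-w|$ (valid precisely because $2\si\ge1$: write $a^{2\si}-b^{2\si}=\int_b^a 2\si\,t^{2\si-1}\,dt$ with $b\le a=\max(|v|,|w|)$, and use $\big||v|-|w|\big|\le|v-w|$) bounds it by $C|v|\big(|v|^{2\si-1}+|w|^{2\si-1}\big)|v-w|$; a final use of Young's inequality $|v|\,|w|^{2\si-1}\le\frac{1}{2\si}|v|^{2\si}+\frac{2\si-1}{2\si}|w|^{2\si}$ converts this into $c\big(|v|^{2\si}+|w|^{2\si}\big)|v-w|$. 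Adding the two contributions gives \eqref{eq:NLlip}.

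The one mild obstacle — and the reason for using the two-term decomposition above rather than differentiating $z\mapsto z\,e^{i\tau\lambda|z|^{2\si}}$ along the segment from $w$ to $v$ — is that $z\mapsto|z|^{2\si}$ is merely Lipschitz, and not $C^1$, at the origin when $2\si=1$. The splitting confines this loss of smoothness to a factor that is still controlled by the uniform Lipschitz bound for $\theta\mapsto e^{i\theta}$, so that no differentiation through $z=0$ is ever required and all constants are manifestly independent of $\tau\in(0,1)$.
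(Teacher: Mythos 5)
Your proof is correct, and since the paper imports this lemma from Choi--Koh 2021 without reproducing the argument, what you have written is exactly the kind of elementary verification one would expect: the bound $|e^{i\theta}-1|\le|\theta|$ for \eqref{eq:NLpuissance}, the chain rule applied to $z\mapsto H(|z|^2)z$ for \eqref{eq:gradientNL} (with the correct observation that the singularity of $H'$ at the origin is compensated by the factor $|z|^2$, so the differential extends continuously by $0$), and the two-term telescoping decomposition plus the inequality $\big||v|^{2\si}-|w|^{2\si}\big|\lesssim(|v|^{2\si-1}+|w|^{2\si-1})|v-w|$ (valid for $2\si\ge1$) for \eqref{eq:NLlip}. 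The final Young step is a harmless alternative to the direct case distinction $|v|\,|w|^{2\si-1}\le\max(|v|,|w|)^{2\si}\le|v|^{2\si}+|w|^{2\si}$, and all constants are visibly independent of $\tau$. Your closing remark about why one splits rather than differentiating $z\mapsto z\,e^{i\tau\lambda|z|^{2\si}}$ along the segment is apt: the integrand would involve $|z|^{2\si-1}$, which is discontinuous at $0$ when $\si=1/2$, so the mean-value route needs the same case handling that the decomposition avoids.
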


\begin{lemma}{\cite[Lemma~2.7]{ChoiKoh2021}}\label{lem-5-2}
For any admissible pairs $(q_1,r_1)$ and $(q_2,r_2)$, there is a
constant $C_{d,q_1,q_2}>0$ such that
    \begin{equation}\label{eq-5-5}
    \begin{split}
    &\left\| \int_{s< n \tau} S_{\tau}(n \tau- s) f(s) ds
        - \tau \sum_{k=-\infty}^{n-1} S_{\tau}(n \tau - k \tau) f(k
        \tau) \right\|_{\ell^{q_1} (\tau \Z; L^{r_1} )} \\
    &\qquad\le C_{d,q_1,q_2}~ \tau^{1/2} \|f\|_{L^{q_2'}(\R; W^{1,r_2'})}
        + C_{d,q_1,q_2}~ \tau \| \partial_t f \|_{L^{q_2'}(\R; L^{r_2'})}
    \end{split}
    \end{equation}
hold for any test function $f \in \mathcal{S}(\R^{d+1})$.
\end{lemma}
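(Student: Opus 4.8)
The plan is to compare the continuous and discrete Duhamel operators interval by interval on $[k\tau,(k+1)\tau)$. The algebraic fact that makes this work is the factorization
\[
S_\tau(n\tau-s)=S_\tau(n\tau-k\tau)\,S(k\tau-s),
\]
valid for all $s,k$ because $S(t)$ and $\Pi_\tau$ are commuting Fourier multipliers and $S$ obeys the group law (it is the \emph{full} propagator $S(k\tau-s)$, not $S_\tau(k\tau-s)$, that appears on the right). Writing $\int_{s<n\tau}(\cdots)ds=\sum_{k=-\infty}^{n-1}\int_{k\tau}^{(k+1)\tau}(\cdots)ds$ and using $\tau S_\tau(n\tau-k\tau)f(k\tau)=\int_{k\tau}^{(k+1)\tau}S_\tau(n\tau-k\tau)f(k\tau)\,ds$, the expression to be estimated equals
\[
\sum_{k=-\infty}^{n-1}S_\tau(n\tau-k\tau)\int_{k\tau}^{(k+1)\tau}\Big(\big(S(k\tau-s)-\Id\big)f(s)+\big(f(s)-f(k\tau)\big)\Big)ds .
\]
I would estimate the two resulting sums separately; for each I apply the discrete inhomogeneous Strichartz estimate \eqref{eq:disc-stri-inhom} with the source $g(k\tau):=\tau^{-1}\int_{k\tau}^{(k+1)\tau}(\text{integrand})\,ds$, so that $\tau\sum_{k<n}S_\tau((n-k)\tau)g(k\tau)$ reproduces the sum at hand, and it only remains to bound $\|g\|_{\ell^{q_2'}(\tau\Z;L^{r_2'})}$.

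For the time-sampling term, $f(s)-f(k\tau)=\int_{k\tau}^{s}\partial_t f(\sigma)\,d\sigma$ gives $\|g(k\tau)\|_{L^{r_2'}}\le\int_{k\tau}^{(k+1)\tau}\|\partial_t f(\sigma)\|_{L^{r_2'}}\,d\sigma$, and H\"older in $\sigma$ yields $\|g\|_{\ell^{q_2'}(\tau\Z;L^{r_2'})}\lesssim\tau\|\partial_t f\|_{L^{q_2'}(\R;L^{r_2'})}$, which together with \eqref{eq:disc-stri-inhom} produces the second term of \eqref{eq-5-5}. For the propagator term I first insert the frequency cut-off: using the identity $\Pi_\tau=\Pi_\tau\Pi_{\tau/4}$ (a consequence of the properties of $\chi$) and commuting $\Pi_\tau$ past $S(k\tau-s)-\Id$,
\[
S_\tau(n\tau-k\tau)\big(S(k\tau-s)-\Id\big)f(s)=S_\tau(n\tau-k\tau)\big(S(k\tau-s)-\Id\big)\Pi_{\tau/4}f(s),
\]
so that $S(k\tau-s)-\Id$ now acts only on frequencies $\lesssim\tau^{-1/2}$. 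The crucial ingredient is then the pointwise-in-time bound
\[
\big\|\big(S(\theta)-\Id\big)\Pi_{\tau/4}\psi\big\|_{L^p}\le C_p\,\tau^{1/2}\|\nabla\psi\|_{L^p},\qquad |\theta|\le\tau,\quad 1<p<\infty,
\]
which I would establish just as Lemma~\ref{lem:cutoff}: the symbol of $(S(\theta)-\Id)\Pi_{\tau/4}$ is $(e^{-i\theta|\xi|^2/2}-1)\chi(\tau^{1/2}\xi/2)$, and after the rescaling $\eta=\tau^{1/2}\xi$, $\beta=\theta/\tau\in[-1,1]$ it equals $\tau^{1/2}|\xi|$ times $(e^{-i\beta|\eta|^2/2}-1)\chi(\eta/2)/|\eta|$, a compactly supported symbol that vanishes like $|\eta|$ at the origin and hence satisfies a Mikhlin--H\"ormander condition uniformly in $\beta$ (so in $\tau$ and $\theta$), given the regularity of $\chi$ already assumed. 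Since $|\theta|=|k\tau-s|\le\tau$ this gives $\|g(k\tau)\|_{L^{r_2'}}\lesssim\tau^{-1/2}\int_{k\tau}^{(k+1)\tau}\|\nabla f(s)\|_{L^{r_2'}}\,ds$, hence, by H\"older again, $\|g\|_{\ell^{q_2'}(\tau\Z;L^{r_2'})}\lesssim\tau^{1/2}\|\nabla f\|_{L^{q_2'}(\R;L^{r_2'})}\le\tau^{1/2}\|f\|_{L^{q_2'}(\R;W^{1,r_2'})}$, and \eqref{eq:disc-stri-inhom} yields the first term of \eqref{eq-5-5}.

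The step I expect to be the main obstacle is the symbol estimate for $(S(\theta)-\Id)\Pi_{\tau/4}$: one cannot gain a derivative from $S(\theta)-\Id$ alone, because $e^{-i\theta|\xi|^2/2}-1$ oscillates and is not a bounded $L^p$-multiplier for $p\neq2$, so it is essential to use the cut-off $\Pi_{\tau/4}$ — equivalently, the identity $\Pi_\tau=\Pi_\tau\Pi_{\tau/4}$ — to confine $|\xi|^2$ to scale $\tau^{-1}$ and thereby turn the time step $\tau$ into a half-derivative gain. A secondary, bookkeeping point is that in the propagator term one must keep $f(s)$ rather than $f(k\tau)$ inside the integral: then the gradient falls on $f(s)$ and directly produces the \emph{continuous} norm $\|\nabla f\|_{L^{q_2'}(\R;L^{r_2'})}$, whereas sampling $f$ first would give only a discrete $\ell^{q_2'}$-norm and force a spurious $\partial_t\nabla f$ term into \eqref{eq-5-5}. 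The hypothesis $f\in\mathcal{S}(\R^{d+1})$ serves only to justify Fubini, the fundamental theorem of calculus in $t$, and absolute convergence of the sum over $k$; the estimate then extends by density to all $f$ for which the right-hand side of \eqref{eq-5-5} is finite.
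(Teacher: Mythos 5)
The paper does not prove this lemma: it is quoted verbatim from Choi--Koh \cite[Lemma~2.7]{ChoiKoh2021}, so there is no in-paper proof to compare against. That said, your argument is correct and is the natural route. The algebra all checks: the factorization $S_\tau(n\tau-s)=S_\tau(n\tau-k\tau)S(k\tau-s)$ is valid because $S_\tau=\Pi_\tau S$ and $\Pi_\tau$ commutes with $S$; the split into $(S(k\tau-s)-\Id)f(s)$ and $f(s)-f(k\tau)$, feeding the interval averages $g(k\tau)$ into \eqref{eq:disc-stri-inhom}, and the H\"older bookkeeping in $k$ and $s$ all produce exactly the $\tau^{1/2}\|\nabla f\|_{L^{q_2'}L^{r_2'}}$ and $\tau\|\partial_t f\|_{L^{q_2'}L^{r_2'}}$ contributions. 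The crucial multiplier bound $\|(S(\theta)-\Id)\Pi_{\tau/4}\psi\|_{L^p}\lesssim \tau^{1/2}\|\nabla\psi\|_{L^p}$ for $|\theta|\le\tau$ also works: after the rescaling $\eta=\tau^{1/2}\xi$, $\beta=\theta/\tau$, the symbol $(e^{-i\beta|\eta|^2/2}-1)\chi(\eta/2)/|\eta|$ vanishes to order one at the origin, is compactly supported, and satisfies the Mikhlin condition $|\partial^\alpha m|\lesssim|\eta|^{-|\alpha|}$ uniformly in $\beta\in[-1,1]$, which is scale invariant and therefore transfers back to $\xi$; the regularity $\chi\in C^k$ with $k>1+d/2$ already assumed in the paper suffices. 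Your identification of $\Pi_\tau=\Pi_\tau\Pi_{\tau/4}$ as the mechanism that makes $S(\theta)-\Id$ yield a half-derivative (rather than a full power of $\tau$ with a derivative loss, or nothing at all on $L^p$, $p\ne2$) is precisely the point, and it mirrors the commutator trick the paper itself uses in the proof of Corollary~\ref{cor:stri-disc}.

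One caveat worth flagging: the Miklin step needs $1<r_2'<\infty$, which excludes the endpoint $(q_2,r_2)=(4,\infty)$ that the paper's Definition~\ref{def:adm} admits when $d=1$. The paper only ever invokes this lemma with $(q_2,r_2)=(q_0,r_0)$ and $r_0=2\sigma+2<\infty$, so nothing is lost in practice, but a fully general statement for $d=1$ would require replacing the Mikhlin argument by a direct $L^1$ kernel estimate for the compactly supported symbol.
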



\section{$L^2$ convergence}\label{sec:CVL2}

\begin{proof}[Proof of Theorem~\ref{theo:CVL2}]
In view of Theorem~\ref{theo:aprioriX} and the assumption of Theorem~\ref{theo:CVL2}, we see that $u$ and $Z_{\tau}$ satisfy the following estimates
  \begin{equation}\label{eq:aprioriCVL2}
	\begin{aligned}
	&\|u\|_{Y(\R_+)}
	\le M_1, \\
	\max_{A\in \{\Id,\nabla,J\}}&\| A(n\tau)Z_{\tau}(n \tau)\|_{\ell^{\infty} (\tau\N; L^2)} +
	\| Z_{\tau}(n \tau)\|_{\ell^{q_0} (\tau\N; W^{1,r_0} )}
	\le M_2.
	\end{aligned}
\end{equation}
We shall estimate $Z_{\tau}(n \tau) - \Pi_{\tau} u(n \tau)$ instead of
$Z_{\tau}(n \tau) - u(n \tau)$, in view of
    \[
    \left\| u(n \tau) - \Pi_{\tau} u(n \tau) \right\|_{\ell^{\infty} (I; L^2)}
    \le C\tau^{1/2} \| u(n \tau)\|_{\ell^{\infty} (I; H^1)}
    \le C\tau^{1/2} M_1,
    \]
    by \eqref{eq-2-20}.
We recall that $\gamma<\infty$ is defined by \eqref{eq:gamma}. We recall that
$\gamma\delta(r_0)>1$ (see \eqref{eq:gammadeltar0}).
Therefore, in view of \eqref{eq:ellqLr}, \eqref{eq:aprioriCVL2} and
\eqref{eq:GNlibre}, for any $\eta>0$, we can find a \emph{finite} number
$K=K(\eta)$ of time intervals $I_j=\overline{[m_j,m_{j+1})}$
with $m_j\in \N$ if $j\le K$, $m_{K+1}=\infty$, and $\tau(\eta)>0$ such that for $1\le j\le K$ and
$0<\tau\le \tau(\eta)$,
\begin{equation*}
   \|u(k\tau)\|_{\ell^{\gamma}(\tau I_j;L^{r_0})}+ \|Z_\tau(k\tau)\|_{\ell^{\gamma}(\tau
    I_j;L^{r_0})}\le \eta,\quad \R_+ = \bigcup_{j=1}^K I_j.
\end{equation*}
We consider the adherence of $I_j$ because unlike in the continuous
case, for $j+1\le K$, the singleton $\{m_{j+1}\}$ is not of measure
zero, and actually becomes important in the following discussion.
On each interval $I_j$, the discrete Duhamel's formula
\eqref{eq:discrete-duhamel} can be written as
    \begin{equation}\label{eq-a-1'}
    Z_{\tau}(m_j\tau +n\tau)
    =S_{\tau}(n\tau) Z_{\tau}(m_j\tau) +
    \tau \sum_{k=0}^{n-1} S_{\tau}(n \tau -k \tau) \frac{N (\tau) -\Id}{\tau} Z_{\tau}(m_j\tau + k \tau),
  \end{equation}
 for $0\le n< m_{j+1}-m_j$ ($m_{j+1}$ may be infinite, but $m_j$ is
 always finite).
By combining this with \eqref{eq:duhamel}, we obtain the following decomposition:
 \begin{equation}\label{eq:decomp}
    Z_{\tau}(m_j \tau +n \tau) - \Pi_{\tau} u(m_j \tau +n \tau)
    = \mathcal{A}_1(n) + \mathcal{A}_2(n) + \mathcal{A}_3(n) + \mathcal{A}_4(n) ,
 \end{equation}
where
    \begin{align*}
    \mathcal{A}_1(n) &:= S_{\tau} (n\tau)\( Z_{\tau}(m_j \tau) - \Pi_{\tau} u(m_j \tau)\),    \\
    \mathcal{A}_2(n) &:= S_{\tau}(n\tau) \( \Pi_{\tau} u (m_j \tau)-u(m_j \tau)\),
    \\
    \mathcal{A}_3(n)& := \tau \sum_{k=0}^{n-1} S_{\tau}(n \tau -k \tau) \Big( \frac{N(\tau) -\Id}{\tau} Z_{\tau}(m_j\tau+k\tau)
        - \frac{N(\tau) -\Id}{\tau} \Pi_{\tau} u(m_j\tau+k\tau) \Big),
    \\
    \mathcal{A}_4(n)&:= \tau \sum_{k=0}^{n-1} S_{\tau}(n \tau -k
      \tau) \frac{N(\tau) -\Id}{\tau} \Pi_{\tau} u(m_j\tau+k\tau)\\
      &\quad
        +i\int_0^{n\tau} S_{\tau}(n\tau -s) \(|u|^{2\si} u\) (m_j\tau+s) ds,
    \end{align*}
    and we omit the dependence of the $\mathcal A_k$'s upon $j$ to
    ease notations.
    The terms $\mathcal A_1$ and $\mathcal A_2$ are linear, while
    $\mathcal A_3$ and $\mathcal A_4$ are nonlinear. The goal is to
    show that in the estimates, the term $\mathcal A_3$ can be absorbed by
    the left hand side of \eqref{eq:decomp}, $\mathcal A_2$ and
    $\mathcal A_4$ are
    $\O(\tau^{1/2})$, and $\mathcal A_1$ is then estimated by
    induction on $j$.

    Let
    $(q,r) \in \{(q_0, r_0), (\infty,2)\}$.
 The homogeneous Strichartz estimate \eqref{eq:discr-stri-homo} yields
    \begin{equation*}
    \| \mathcal{A}_1\|_{\ell^{q}(\tau I_j; L^r)}
    \le C_{d,q} \left\| Z_{\tau}(m_j \tau) - \Pi_{\tau} u (m_j \tau) \right\|_{L^2}.
    \end{equation*}
The second term is controlled via \eqref{eq:discr-stri-homo} and
\eqref{eq-2-20}, by
    \begin{equation*}
    \begin{aligned}
    \| \mathcal{A}_2\|_{\ell^{q}(\tau I_j; L^r)}
    &= \left\|S_{\tau}(n\tau) \( \Pi_{\tau} u(m_j \tau) - u (m_j
      \tau)\)\right \|_{\ell^q (\tau I_j; L^r)} \\
    &\le C_{d,q} \left\| \Pi_{\tau} u(m_j \tau) - u (m_j \tau) \right\|_{L^2} \\
    &\le C_{d,q} \tau^{1/2} \left\| (-\Delta)^{1/2} u(m_j \tau) \right\|_{L^2}
    \le C_{d,q} \tau^{1/2} M_1,
    \end{aligned}
  \end{equation*}
  where we have used \eqref{eq:aprioriCVL2}.
To estimate $\mathcal{A}_3$, we use \eqref{eq:NLlip} to find
    \begin{equation*}
    \left| \frac{N(\tau) -\Id}{\tau} Z_{\tau} - \frac{N(\tau) -\Id}{\tau} \Pi_{\tau} u \right|
    \lesssim \( |Z_{\tau}|^{2\si} +|\Pi_{\tau}u|^{2\si}\) |Z_{\tau} - \Pi_{\tau} u| .
    \end{equation*}
The inhomogeneous Strichartz estimate \eqref{eq:disc-stri-inhom} and
H\"older inequality \eqref{eq:holder} yield
    \begin{equation*}
    \begin{aligned}
    \| \mathcal{A}_3\|_{\ell^{q}(\tau I_j; L^r)}
    &\lesssim  \left\|\( |Z_{\tau}|^{2\si} +|\Pi_{\tau}u|^{2\si} \)
      |Z_{\tau} - \Pi_{\tau} u|  \right\|_{\ell^{q_0'} (\tau I_j; L^{r_0'} )} \\
    &\lesssim \( \|Z_{\tau}\|_{\ell^{\gamma}(\tau I_j;L^{r_0})}^{2\si} +
    \|\Pi_{\tau}u\|_{\ell^{\gamma}(\tau I_j;L^{r_0})}^{2\si}  \) \|Z_{\tau}
    - \Pi_{\tau} u\|_{\ell^{q_0}(\tau I_j;L^{r_0})} \\
    &\le \mathbf C_{d, \sigma,q}\eta^{2\si}   \|Z_{\tau}
    - \Pi_{\tau} u\|_{\ell^{q_0}(\tau I_j;L^{r_0})} ,
    \end{aligned}
  \end{equation*}
  for some constant $\mathbf C_{d, \sigma,q}$, where we have used the definition of the intervals $I_j$ in terms of
  $\eta$. We now choose $\eta>0$ sufficiently small so that
  $\mathbf C_{d, \sigma,q_0}\eta^{2\si}<1/2$, thus
  \begin{equation*}
   \| \mathcal{A}_3\|_{\ell^{q_0}(\tau I_j; L^{r_0})}\le \frac{1}{2} \|Z_{\tau}
    - \Pi_{\tau} u\|_{\ell^{q_0}(\tau I_j;L^{r_0})},\quad 1\le j\le K.
  \end{equation*}
The estimate of $\mathcal A_4$ is postponed to Lemma~\ref{lem:A4}
below,
\begin{equation}\label{eq:A4}
  \max_{1\le j\le K}\( \|\mathcal A_4\|_{\ell^\infty (\tau I_j;L^2)}+
  \|\mathcal A_4\|_{\ell^{q_0} (\tau I_j;L^{r_0})} \)\lesssim \tau^{1/2}.
\end{equation}
Gathering all the previous estimates, we first get, with
$(q,r)=(q_0,r_0)$,
\begin{align*}
  \|Z_{\tau}
    - \Pi_{\tau} u\|_{\ell^{q_0} (\tau I_j;L^{r_0})} &\le  C\left\|
      Z_{\tau}(m_j \tau) - \Pi_{\tau} u (m_j \tau) \right\|_{L^2}\\
      &\quad +
    C\tau^{1/2} + \frac{1}{2} \|Z_{\tau}
    - \Pi_{\tau} u\|_{\ell^{q_0}(\tau I_j;L^{r_0})},
\end{align*}
hence
\begin{equation*}
  \|Z_{\tau}
    - \Pi_{\tau} u\|_{\ell^{q_0}(\tau I_j;L^{r_0})} \le 2 C\left\|
      Z_{\tau}(m_j \tau) - \Pi_{\tau} u (m_j \tau) \right\|_{L^2}
    +2C\tau^{1/2} ,\quad 1\le j\le K.
  \end{equation*}
  Taking now $(q,r)=(\infty,2)$, we obtain
  \begin{align*}
  \|Z_{\tau}
    - \Pi_{\tau} u\|_{\ell^{\infty}(\tau I_j;L^{2})} &\lesssim \left\|
      Z_{\tau}(m_j \tau) - \Pi_{\tau} u (m_j \tau) \right\|_{L^2} + \tau^{1/2} + \|Z_{\tau}
                                                  - \Pi_{\tau} u\|_{\ell^{q_0} (\tau I_j;L^{r_0})}\\
    &\lesssim
  \left\|
      Z_{\tau}(m_j \tau) - \Pi_{\tau} u (m_j \tau) \right\|_{L^2} +
    \tau^{1/2},\quad 1\le j\le K.
  \end{align*}
  Now by construction $m_1=0$, $ Z_{\tau}(m_1\tau) - \Pi_{\tau} u
  (m_1 \tau) = 0$, and for $2\le j\le K$,
  \begin{equation*}
    \left\|
      Z_{\tau}(m_j \tau) - \Pi_{\tau} u (m_j \tau) \right\|_{L^2} \le \left\|
      Z_{\tau} - \Pi_{\tau} u  \right\|_{\ell^\infty(\tau I_{j-1};L^2)} ,
  \end{equation*}
we have the first conclusion in Theorem~\ref{theo:CVL2}. The second one
is then a direct consequence of this first point and the scattering
result recalled in Theorem~\ref{theo:aprioriX}: there exists $u_+\in
\Sigma$ such that
\begin{equation*}
  \|u(t)-S(t)u_+\|_{L^2}\Tend t \infty 0,
\end{equation*}
where we have used the fact that $S(t)$ is unitary on $L^2$.
\end{proof}

We conclude this section by proving \eqref{eq:A4}.

\begin{lemma}\label{lem:A4}
  For $u\in Y(\R_+)$, $\tau \in (0,1)$, denote
  \[\mathcal{A}(u)(n\tau)=\tau \sum_{k=0}^{n-1} S_{\tau}(n \tau -k \tau) \frac{N(\tau)-\Id}{\tau} \Pi_{\tau} u (k \tau)
        + i\int_0^{n\tau} S_{\tau}(n\tau -s) |u|^{2\si} u (s) ds.\]
Then for all admissible pairs $(q,r)$,
    \[\left\|  \mathcal{A}(u)\right\|_{\ell^q (\tau\N; L^r)} \lesssim \tau^{1/2}\(\|u\|_{Y(\R_+)}^{2\si+1}+\|u\|_{Y(\R_+)}^{4\si+1}\)  .
     \]
\end{lemma}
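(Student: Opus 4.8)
The plan is to interpolate between the discrete sum and the continuous integral \emph{through the continuous integral of the same frequency–truncated source}. Write $g(s):=\frac{N(\tau)-\Id}{\tau}\Pi_\tau u(s)$, a function of $x$ for each $s\ge 0$, and split
\[
\mathcal A(u)(n\tau)=\Big(\tau\sum_{k=0}^{n-1}S_\tau\big((n-k)\tau\big)g(k\tau)-\int_0^{n\tau}S_\tau(n\tau-s)g(s)\,ds\Big)+\int_0^{n\tau}S_\tau(n\tau-s)\big(g(s)+i|u|^{2\si}u(s)\big)ds=:Q_1(n)+Q_2(n),
\]
so that $Q_1$ is a pure quadrature error for a fixed integrand and $Q_2$ is a continuous–in–time consistency error. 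I would show each is $\mathcal{O}(\tau^{1/2})$ in every $\ell^q(\tau\N;L^r)$, the claim then following by the triangle inequality. A pleasant feature of this splitting is that only \emph{continuous}-time norms of $u$ will appear, all controlled by $\|u\|_{Y(\R_+)}$ — using in addition $\|u\|_{L^\gamma(\R_+;L^{r_0})}\lesssim\|u\|_{X(\R_+)}\le\|u\|_{Y(\R_+)}$, which follows from the weighted Gagliardo--Nirenberg inequality \eqref{eq:GNlibre} and $\gamma\delta(r_0)>1$ (see \eqref{eq:gammadeltar0}) — while the discrete norm is carried only by $\mathcal A(u)$ itself, i.e.\ always on the output side of the discrete Strichartz estimates.

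For $Q_1$ I would invoke the comparison Lemma~\ref{lem-5-2}, after a routine regularisation of $g$ and a smooth time cut-off near $s=0$ turning $g$ into an admissible test function; the boundary contributions this generates (the $k=0$ summand and the $[0,\tau]$ part of the integral) are $\mathcal{O}(\tau^{1/q_0'})=\mathcal{O}(\tau^{1/2})$ by Bernstein \eqref{eq-2-20'}--\eqref{eq-2-22} (and \eqref{eq-2-6}), the last equality being precisely the energy-subcriticality \eqref{eq:sigma}. The lemma then bounds $\|Q_1\|_{\ell^q(\tau\N;L^r)}$ by $\tau^{1/2}\|g\|_{L^{q_0'}(\R_+;W^{1,r_0'})}+\tau\,\|\partial_t g\|_{L^{q_0'}(\R_+;L^{r_0'})}$. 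The first term is $\lesssim\|u\|_{Y(\R_+)}^{2\si+1}$ using \eqref{eq:NLpuissance} and \eqref{eq:gradientNL} (together with $(2\si+1)r_0'=r_0$, the boundedness of $\Pi_\tau$ on $L^{r_0}$, and its commutation with $\nabla$), then the Hölder inequality \eqref{eq:holder} in time. The second term is the delicate one: since $\si\ge 1/2$ the map $v\mapsto\frac{N(\tau)-\Id}{\tau}v$ is $C^1$ with derivative of size $\lesssim|v|^{2\si}$ (compare \eqref{eq:NLlip}), and differentiating \eqref{eq:NLS} gives $\partial_t\Pi_\tau u=\tfrac i2\Delta\Pi_\tau u-i\Pi_\tau(|u|^{2\si}u)$, so $|\partial_t g|\lesssim|\Pi_\tau u|^{2\si}\big(|\Delta\Pi_\tau u|+|\Pi_\tau(|u|^{2\si}u)|\big)$. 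The first piece I would estimate by \eqref{eq:holder} in time followed by \eqref{eq-2-21} applied to $\nabla u$ — so $\|\Delta\Pi_\tau u\|_{L^{r_0}}=\|\nabla\Pi_\tau\nabla u\|_{L^{r_0}}\lesssim\tau^{-1/2}\|\nabla u\|_{L^{r_0}}$ — giving $\lesssim\tau^{-1/2}\|u\|_{Y(\R_+)}^{2\si+1}$; the second piece is, word for word, the second quantity bounded in Lemma~\ref{lem:source}, hence $\lesssim\tau^{-1/2}\|u\|_{Y(\R_+)}^{4\si+1}$. Multiplying by $\tau$ gives $\tau\|\partial_t g\|_{L^{q_0'}(\R_+;L^{r_0'})}\lesssim\tau^{1/2}\big(\|u\|_{Y(\R_+)}^{2\si+1}+\|u\|_{Y(\R_+)}^{4\si+1}\big)$, as required.

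For $Q_2$ it suffices to bound the source $g+i|u|^{2\si}u$ in $L^{q_0'}(\R_+;L^{r_0'})$ and apply the inhomogeneous estimate \eqref{eq-2-6} with $A=\Id$ (extending the source by $0$ for negative times). I would split $g+i|u|^{2\si}u=R_1+R_2$ with $R_1:=\frac{N(\tau)-\Id}{\tau}\Pi_\tau u+i|\Pi_\tau u|^{2\si}\Pi_\tau u$ and $R_2:=i\big(|u|^{2\si}u-|\Pi_\tau u|^{2\si}\Pi_\tau u\big)$. The Taylor bound $|e^{-i\theta}-1+i\theta|\le\theta^2/2$ gives $|R_1|\le\tfrac\tau2|\Pi_\tau u|^{4\si+1}$ pointwise, so Lemma~\ref{lem:source} yields $\|R_1\|_{L^{q_0'}(\R_+;L^{r_0'})}\lesssim\tau^{1/2}\|u\|_{Y(\R_+)}^{4\si+1}$; for $R_2$, Hölder in space with \eqref{eq-2-20}--\eqref{eq-2-20'} gives $\|R_2(s)\|_{L^{r_0'}}\lesssim\big(\|u(s)\|_{L^{r_0}}^{2\si}+\|\Pi_\tau u(s)\|_{L^{r_0}}^{2\si}\big)\|u(s)-\Pi_\tau u(s)\|_{L^{r_0}}\lesssim\tau^{1/2}\|u(s)\|_{L^{r_0}}^{2\si}\|\nabla u(s)\|_{L^{r_0}}$, whence $\|R_2\|_{L^{q_0'}(\R_+;L^{r_0'})}\lesssim\tau^{1/2}\|u\|_{Y(\R_+)}^{2\si+1}$ by \eqref{eq:holder} in time; so $\|Q_2\|_{\ell^q(\tau\N;L^r)}\lesssim\tau^{1/2}\big(\|u\|_{Y(\R_+)}^{2\si+1}+\|u\|_{Y(\R_+)}^{4\si+1}\big)$. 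The genuinely delicate point, as the above makes plain, is the $\tau\|\partial_t g\|$ contribution to $Q_1$: differentiating in time produces both the loss $\Delta\Pi_\tau u$ (a power $\tau^{-1/2}$, absorbed by the smoothing of $\Pi_\tau$) and a term of doubled homogeneity $4\si+1$, which is energy-\emph{super}critical and is tamed only by the frequency cut-off (Lemma~\ref{lem:source}) together with the global-in-time integrability furnished by the $J$-weighted decay \eqref{eq:GNlibre} — this is exactly where the two main mechanisms of the paper must be used in tandem.
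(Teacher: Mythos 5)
Your proof follows essentially the same route as the paper's: the same splitting into a pure quadrature error (your $Q_1$, the paper's $\mathcal{A}_1$) handled by Lemma~\ref{lem-5-2}, and a consistency error (your $Q_2$, the paper's $\mathcal{A}_2$) split into the Taylor remainder and the frequency-truncation error (the paper's $\mathcal{B}_2,\mathcal{B}_3$ equal your $R_1,R_2$), with Lemma~\ref{lem:source} controlling the doubled-homogeneity $4\si+1$ pieces and Bernstein plus \eqref{eq:apriori-univ} the rest. The only visible difference is your explicit remark about regularising $g$ so that Lemma~\ref{lem-5-2} (stated for Schwartz test functions) applies, a technicality the paper leaves tacit.
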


\begin{proof}
  As observed in the previous computations, we recall that the
  assumptions of Lemma~\ref{lem:A4} imply
  \begin{equation*}
    u\in L^\gamma(\R_+;L^{r_0}),
  \end{equation*}
  where $\gamma$ is given by \eqref{eq:gamma}, since
  $\gamma\delta(r_0)>1$. Decompose $\mathcal{A}(u)(n\tau)$ as
  \[\mathcal{A}(u)(n\tau)=\mathcal{A}_1(u)(n\tau)+\mathcal{A}_2(u)(n\tau),\]
  where
    \begin{equation*}
    \begin{aligned}
    \mathcal{A}_1(u)(n\tau)&= \tau \sum_{k=0}^{n-1} S_{\tau}(n \tau -k \tau) \mathcal{B}_1(u)(k\tau)
        - \int_0^{n\tau} S_{\tau}(n\tau -s) \mathcal{B}_1(u)(s) ds, \\
  \mathcal{A}_2(u)(n\tau)&=\int_0^{n\tau} S_{\tau}(n\tau -s) \mathcal{B}_1(u)(s)ds+ i\int_0^{n\tau} S_{\tau}(n\tau -s) |u|^{2\si} u (s)ds,
    \end{aligned}
    \end{equation*}
    with
    \[\mathcal{B}_1(u)(s) := \frac{N(\tau) -\Id}{\tau} \Pi_{\tau} u(s).\]
    By Lemma \ref{lem-5-2}, we can estimate
\[      \left\| \mathcal{A}_1(u) \right\|_{\ell^q (\tau\N; L^r)}\le C \tau^{1/2} \left\| \mathcal{B}_1(u) \right\|_{L^{{q_0}'}(\R_+; W^{1,{r_0}'})}
        + C \tau \left\| \mathcal{B}_1(u) \right\|_{W^{1,{q_0}'}(\R_+; L^{{r_0}'})},
\]
for all admissible pair $(q,r)$. Lemma~\ref{lem:propNL} entails
\begin{equation*}
  \left\| \mathcal{B}_1(u) \right\|_{L^{{q_0}'}(\R_+;
  W^{1,{r_0}'})}\lesssim \left\| |\Pi_{\tau} u|^{2\si+1}
                           \right\|_{L^{q_0'}(\R_+; L^{r_0'})}
        + \left\| |\Pi_{\tau} u|^{2\si} |\nabla \Pi_{\tau} u| \right\|_{L^{q_0'}(\R_+; L^{r_0'})}.
      \end{equation*}
  Using now H\"older inequality \eqref{eq:holder} and Lemma~\ref{lem:bernstein},
   \begin{align*}
  \left\| \mathcal{B}_1(u) \right\|_{L^{{q_0}'}(\R_+;
  W^{1,{r_0}'})}& \lesssim \| \Pi_{\tau} u\|^{2\si}_{L^\gamma(\R_+;L^{r_0})}
                  \|\Pi_{\tau} u\|_{L^{q_0}(\R_+;W^{1,r_0})}\\
    & \lesssim \| u\|^{2\si}_{L^\gamma(\R_+;L^{r_0})}
  \| u\|_{L^{q_0}(\R_+;W^{1,r_0})} \lesssim \|u\|_{Y(\R_+)}^{2\si+1}.
   \end{align*}
   For the other term,
   Lemma~\ref{lem:propNL} yields
   \begin{align*}
     \left\| \mathcal{B}_1(u) \right\|_{W^{1,{q_0}'}(\R_+; L^{{r_0}'})}
   &= \left\| \d_t \( \frac{N(\tau) -\Id}{\tau} \Pi_{\tau} u \)
    \right\|_{L^{q_0'}(\R_+; L^{r_0'})} \\
&  \lesssim \left\| |\Pi_{\tau}u|^{2\si} \d_t \Pi_{\tau} u
  \right\|_{L^{q_0'}(\R_+; L^{r_0'})}.
    \end{align*}
    Observing that $\d_t$ and $\Pi_\tau$ commute, \eqref{eq:NLS} implies
    \begin{align*}
     \left\| \mathcal{B}_1(u) \right\|_{W^{1,{q_0}'}(\R_+; L^{{r_0}'})}
   &\lesssim  \left\|  |\Pi_{\tau}u|^{2\si} \Pi_{\tau}\Delta u
     \right\|_{L^{q_0'}(\R_+; L^{r_0'})} \\
      &\quad+ \left\| |\Pi_{\tau}u|^{2\si}  \Pi_{\tau} \(|u|^{2\si}u\)
     \right\|_{L^{q_0'}(\R_+; L^{r_0'})} .
    \end{align*}
  For the first term of the right side, H\"older inequality
  \eqref{eq:holder} and
  Lemma~\ref{lem:bernstein} imply
  \begin{align*}
   \left\|  |\Pi_{\tau}u|^{2\si} \Pi_{\tau}\Delta u
     \right\|_{L^{q_0'}(\R_+; L^{r_0'})} &\le  \left\|
    \Pi_{\tau}u\right\|_{L^{\gamma}(\R_+; L^{r_0})}^{2\si}
    \left\|  \Pi_{\tau} \Delta u \right\|_{L^{q_0}(\R_+; L^{r_0})}\\
  &\lesssim \tau^{-1/2}  \left\|
    \Pi_{\tau}u\right\|_{L^{\gamma}(\R_+; L^{r_0})}^{2\si}
    \left\|  \Pi_{\tau} \nabla u \right\|_{L^{q_0}(\R_+; L^{r_0})}\\
     &\lesssim \tau^{-1/2}  \left\|
   u\right\|_{L^{\gamma}(\R_+; L^{r_0})}^{2\si}
       \left\|  \nabla u \right\|_{L^{q_0}(\R_+; L^{r_0})}\\
    &\lesssim
       \tau^{-1/2}\|u\|_{Y(\R_+)}^{2\si+1}  .
  \end{align*}
  Lemma~\ref{lem:source} provides the following control for the second
  term of the right side:
  \begin{align*}
    \left\| |\Pi_{\tau}u|^{2\si}  \Pi_{\tau} \(|u|^{2\si}u\)
     \right\|_{L^{q_0'}(\R_+; L^{r_0'})} \lesssim \tau^{-1/2}\|u\|_{Y(\R_+)}^{4\si+1},
  \end{align*}
  and we come up with
  \begin{align*}
   \left\| \mathcal{A}_1(u) \right\|_{\ell^q (\tau\N; L^r)}&\lesssim
    \tau^{1/2} \left\| \mathcal{B}_1(u) \right\|_{L^{{q_0}'}(\R_+; W^{1,{r_0}'})}
        + \tau \left\| \mathcal{B}_1(u) \right\|_{W^{1,{q_0}'}(\R_+;
    L^{{r_0}'})}\\
    &\lesssim \tau^{1/2}\big(\|u\|_{Y(\R_+)}^{2\si+1}+\|u\|_{Y(\R_+)}^{4\si+1}\big).
  \end{align*}
   To complete the proof, we perform another decomposition,
    \[
     \mathcal{B}_1(u) +i|u|^{2\si} u= \mathcal{B}_2(u) +i\mathcal{B}_3(u),\]
   where
  \[\mathcal{B}_2(u) := \mathcal{B}_1(u)+i|\Pi_{\tau}u|^{2\si} \Pi_{\tau}u,    \quad
    \mathcal{B}_3(u) := |u|^{2\si} u-|\Pi_{\tau}u|^{2\si} \Pi_{\tau}u.
  \]
The discrete inhomogeneous Strichartz estimates \eqref{eq-2-6} yields,
for  $(q,r)$ admissible,
    \begin{align*}
    &\quad\left\|\mathcal{A}_2(u) \right\|_{\ell^q (\tau \mathbb{N}; L^r)}\\
    &\le \Big\| \int_{0}^{n\tau} S_{\tau}(n \tau -s) \mathcal{B}_2(u) ds \Big\|_{\ell^q (\tau\mathbb{N}; L^r)}
        + \Big\| \int_{0}^{n\tau} S_{\tau}(n \tau -s)
      \mathcal{B}_3(u) ds \Big\|_{\ell^q (\tau\mathbb{N}; L^r)} \\
    &\lesssim \left\| \mathcal{B}_2(u) \right\|_{L^{q_0'} (\R_+; L^{r_0'})}
        + \left\| \mathcal{B}_3(u) \right\|_{L^{q_0'} (\R_+; L^{r_0'})}.
    \end{align*}
Recall that $N(\tau)z= ze^{-i\tau|z|^{2\si}}$, Taylor formula yields the pointwise estimate
\[
    \left| \mathcal{B}_2(u) \right|
    = \Big|\frac{N(\tau) -\Id}{\tau} \Pi_{\tau} u +i |\Pi_{\tau}u|^{2\si} \Pi_{\tau}u \Big|
      \lesssim \tau |\Pi_{\tau}u|^{4\si+1}.
\]
Thus, we have
    \begin{equation*}
    \left\| \mathcal{B}_2(u) \right\|_{L^{q_0'} (\R_+; L^{r_0'})}
    \lesssim \tau \left\| |\Pi_{\tau} u|^{4\si+1} \right\|_{L^{q_0'}
      (\R_+; L^{r_0'})}\lesssim \tau^{1/2}\|u\|_{Y(\R_+)}^{4\si+1},
  \end{equation*}
  where we have used Lemma~\ref{lem:source} for the last estimate.

Finally, H\"older inequality \eqref{eq:holder} yields
    \begin{align*}
    \left\| \mathcal{B}_3(u) \right\|_{L^{q_0'} (\R_+; L^{r_0'})}
    &=\left\| |\Pi_{\tau}u|^{2\si} \Pi_{\tau}u - |u|^{2\si} u
      \right\|_{L^{q_0'} (\R_+; L^{r_0'})} \\
      &\lesssim \(\| \Pi_{\tau}u \|_{L^\gamma(\R_+;L^{r_0})}^{2\si} +
        \|u \|_{L^\gamma(\R_+;L^{r_0})}^{2\si}\)
      \left\| \Pi_{\tau}u - u \right\|_{L^{q_0} (\R_+; L^{r_0})}\\
      &\lesssim \tau^{1/2} \|u\|_{Y(\R_+)}^{2\si}
        \| u \|_{L^{q_0} (\R_+; W^{1,r_0})}
        \lesssim   \tau^{1/2} \|u\|_{Y(\R_+)}^{2\si+1} ,
    \end{align*}
where we have used Lemma~\ref{lem:bernstein}. Lemma~\ref{lem:A4} follows.
\end{proof}


\section{Local stability}
\label{sec:local}

 We denote
  \begin{equation}\label{eq:chi}
    \|v\|_{\Gamma(I)}
    =\|v\|_{\ell^\infty(I;L^2)}+\|\nabla v\|_{\ell^\infty(I;L^2)}+
    \|J v\|_{\ell^\infty(I;L^2)},
  \end{equation}
  the discrete counterpart of the norm $\|\cdot \|_{X(I)}$ defined in
  \eqref{eq:Xnorm}.
Define
\begin{equation*}
  K_0 := 1+\sup_{\tau\in (0,1)}\sup_{\phi\in L^2}\frac{\|S_\tau(\cdot)
    \phi\|_{\ell^{q_0}(\tau\Z; L^{r_0})}+ \|S_\tau(\cdot) \phi
    \|_{\ell^\infty(\tau\Z;L^2)}}{\|\phi\|_{L^2}},
\end{equation*}
which is finite in view of the discrete homogeneous Strichartz
estimates \eqref{eq:discr-stri-homo}.

Recall that in accordance with \eqref{eq:ellqLr}, we have, for $1\le q<\infty$,
\begin{equation*}
  \left\|
      \<n\tau\>^{-\alpha}\right\|_{\ell^q(I)}=\Big(\tau\sum_{n\tau\in
      I}\<n\tau\>^{-\alpha q}\Big)^{1/q}.
\end{equation*}
\begin{proposition}[Local $\Sigma$ stability]\label{prop:local-stab}
  Let $\phi\in \Sigma$. Consider an
  interval $I$ of the form $I=\overline{[a\tau,b\tau)}$, with $a,b\in \N\cup\{\infty\}$ such
  that $0\le a<b\le \infty$. There exist $\tau_0>0$ and $K_1$
  depending only on $d$
  and $\si$ such that if
  \begin{equation}\label{assump}
    6K_1 \left\|
      \<n\tau\>^{-\delta(r_0)}\right\|_{\ell^\gamma(I)}^{2\si}\le
    \(4K_0\|Z_\tau\|_{X(\{a\tau\})}\)^{-2\si},
 \end{equation}
  then
  \begin{equation}
    \label{eq:loc-stab1}
    \max_{A\in \{\Id,\nabla,J\}} \sum_{(q,r)\in
      \{(q_0,r_0),(\infty,2)\}}\| A Z_{\tau}\|_{\ell^q(I;L^r)}\le 4 K_0 \|Z_\tau\|_{X(\{a\tau\})},
    \quad\forall\tau \in (0,\tau_0),
  \end{equation}
  and for all $(q,r)$ admissible, there exists $K_q$ such that
 \begin{equation}\label{eq:loc-stab2}
    \max_{A\in \{\Id,\nabla,J\}}\| AZ_{\tau}\|_{\ell^q (I; L^{r} )}
    \le K_q\|Z_\tau\|_{X(\{a\tau\})} ,\quad \forall\tau \in (0,\tau_0).
    \end{equation}
\end{proposition}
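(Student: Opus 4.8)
The plan is to bootstrap, from the initial bound on $Z_\tau$ at time $a\tau$, a uniform pointwise-in-time $L^2$ control of $A(n\tau)Z_\tau(n\tau)$ on all of $I$, and then to read off \eqref{eq:loc-stab1}--\eqref{eq:loc-stab2} from the discrete Duhamel formula \eqref{eq:discrete-duhamel} and the discrete Strichartz estimates of Corollary~\ref{cor:stri-disc}. Set $M:=4K_0\|Z_\tau\|_{X(\{a\tau\})}$ and $\eps:=\|\<n\tau\>^{-\delta(r_0)}\|_{\ell^\gamma(I)}$, which is finite since $\gamma\delta(r_0)>1$ (see \eqref{eq:gammadeltar0}); then \eqref{assump} reads $6K_1(M\eps)^{2\si}\le 1$. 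The core is the following claim, proved by induction on $c$: for every $c$ with $a\le c$ and $c\tau\in I$,
\begin{equation}\label{eq:claimL2}
  \max_{A\in\{\Id,\nabla,J\}}\|A(c\tau)Z_\tau(c\tau)\|_{L^2}\le M .
\end{equation}
For $c=a$ this is just the definition of $\|Z_\tau\|_{X(\{a\tau\})}$ (recall $K_0\ge 1$).

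For the inductive step, assume \eqref{eq:claimL2} for all $k$ with $a\le k\le c$, and let $c+1\le b$. First, the weighted Gagliardo--Nirenberg inequality \eqref{eq:GNlibre} (for $k\tau$ away from the origin) together with the ordinary one (near the origin) gives $\|Z_\tau(k\tau)\|_{L^{r_0}}\lesssim\<k\tau\>^{-\delta(r_0)}M$ for $a\le k\le c$, hence $\|Z_\tau\|_{\ell^\gamma(\overline{[a\tau,c\tau]};L^{r_0})}\lesssim M\eps$. Next, apply $A\in\{\Id,\nabla,J\}$ to the discrete Duhamel formula \eqref{eq:discrete-duhamel} based at $a\tau$; the key structural fact is that the Duhamel sum for $Z_\tau(c'\tau)$, $a\le c'\le c+1$, only involves the already-controlled values $Z_\tau(k\tau)$, $a\le k\le c$. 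Combining Corollary~\ref{cor:stri-disc} (for $A=J$ the failure of commutation with $\Pi_\tau$ costs only the $O(\tau^{1/2})$ terms there, which rest on Lemma~\ref{lem:cutoff}), the pointwise estimates of Lemma~\ref{lem:propNL} together with $|J(t)(\tfrac{N(\tau)-\Id}{\tau}v)|\lesssim|v|^{2\si}|J(t)v|$ (a consequence of \eqref{eq:Jder}), and the discrete H\"older inequality \eqref{eq:holder}, one first gets, on the finite set $\overline{[a\tau,c\tau]}$ (where all norms are finite, so the small factor $\|Z_\tau\|_{\ell^\gamma(\overline{[a\tau,c\tau]};L^{r_0})}^{2\si}\lesssim(M\eps)^{2\si}\lesssim K_1^{-1}$ may be absorbed once $K_1$ is large enough, depending only on $d,\si$),
\[
  \max_{A}\|AZ_\tau\|_{\ell^{q_0}(\overline{[a\tau,c\tau]};L^{r_0})}\le C(d,\si)\,\|Z_\tau\|_{X(\{a\tau\})} ,
\]
and then, inserting this into the $L^2$ estimate for $Z_\tau((c+1)\tau)$ and invoking \eqref{assump} once more to kill the nonlinear term, one finds that $\max_A\|A((c+1)\tau)Z_\tau((c+1)\tau)\|_{L^2}$ is at most a constant depending only on $d,\si$ (the homogeneous Strichartz constant, including the $J$-commutator contribution of Corollary~\ref{cor:stri-disc}) times $\|Z_\tau\|_{X(\{a\tau\})}$, up to a negligible $O(\tau^{1/2})$ correction. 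Choosing $\tau_0>0$ small enough and $K_1$ large enough, both depending only on $d,\si$, so that this last bound is $\le M$ closes the induction.

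Granting \eqref{eq:claimL2} for all such $c$ (hence, letting $c\to b$, for all of $I$), we have $\max_A\|AZ_\tau\|_{\ell^\infty(I;L^2)}\le M$ and $\|Z_\tau\|_{\ell^\gamma(I;L^{r_0})}\lesssim M\eps$. Feeding these into the discrete Duhamel formula and Corollary~\ref{cor:stri-disc}, the homogeneous part is $\le 2K_0\|Z_\tau\|_{X(\{a\tau\})}$ (for $A\in\{\Id,\nabla\}$ directly from the definition of $K_0$, via $\|S_\tau(\cdot)\phi\|_{\ell^{q_0}(\tau\Z;L^{r_0})}+\|S_\tau(\cdot)\phi\|_{\ell^\infty(\tau\Z;L^2)}\le(K_0-1)\|\phi\|_{L^2}$, and for $A=J$ from \eqref{Jineq1} with the $O(\tau^{1/2})$ part absorbed for $\tau<\tau_0$), while the nonlinear part carries the factor $\|Z_\tau\|_{\ell^\gamma(I;L^{r_0})}^{2\si}\lesssim(M\eps)^{2\si}$, which \eqref{assump} makes $\le 1/2$ for $K_1$ large; absorbing it (first on each finite $\overline{[a\tau,c\tau]}$, then $c\to b$) gives $\max_A\sum_{(q,r)}\|AZ_\tau\|_{\ell^q(I;L^r)}\le\frac{2K_0}{1-1/2}\|Z_\tau\|_{X(\{a\tau\})}=4K_0\|Z_\tau\|_{X(\{a\tau\})}$, i.e.\ \eqref{eq:loc-stab1}. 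Finally, \eqref{eq:loc-stab2} follows from one more application of the Duhamel formula and Corollary~\ref{cor:stri-disc} with an arbitrary admissible pair $(q,r)$ on the left, every right-hand term being already controlled by \eqref{eq:loc-stab1} and the smallness \eqref{assump}.

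The one genuinely delicate issue is the self-referential nature of the estimates: after H\"older and Gagliardo--Nirenberg the nonlinear contribution equals a constant times the quantity being bounded, but that constant is itself a power of the same quantity. In continuous time one closes this by a standard continuity argument; its discrete substitute is the induction \eqref{eq:claimL2}, which works precisely because on $\overline{[a\tau,c\tau]}$ the discrete Duhamel formula only sees strictly earlier values of $Z_\tau$, so the dangerous $\ell^\gamma$-factor is controlled by the inductive hypothesis (i.e.\ by the fixed number $M\eps$) rather than by the unknown. The other new feature, the non-commutation of $J$ with the frequency cut-off $\Pi_\tau$, is already packaged into the $O(\tau^{1/2})$ errors of Lemma~\ref{lem:cutoff} and Corollary~\ref{cor:stri-disc}; the restriction to $\tau<\tau_0$ serves only to render those lower-order terms negligible, which is what lets the constant in \eqref{eq:loc-stab1} come out as $4K_0$.
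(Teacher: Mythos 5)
Your proof is correct, and it rests on precisely the same toolkit as the paper's: the discrete Duhamel formula \eqref{eq:discrete-duhamel}, the discrete Strichartz estimates with the $J$-commutator corrections of Corollary~\ref{cor:stri-disc} and Lemma~\ref{lem:cutoff}, H\"older in the form \eqref{eq:holder}, the weighted Gagliardo--Nirenberg inequality \eqref{eq:GNlibre}, and the smallness furnished by \eqref{assump} together with $\gamma\delta(r_0)>1$. The one substantive difference is in how the bootstrap is closed. The paper defines the set
$\Lambda=\{N:\max_A\sum_{(q,r)}\|AZ_\tau\|_{\ell^q([a\tau,(N+a)\tau];L^r)}\le 4K_0\|Z_\tau\|_{X(\{a\tau\})}\}$,
shows $0\in\Lambda$, and assumes towards a contradiction that $\Lambda$ has a largest element $N_*$; the maximality then forces the reverse of \eqref{assump} to hold (measured on $[a,a+N_*]$), which in turn shows $I\subsetneqq[a\tau,(a+N_*)\tau]$ and closes the argument. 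You instead run an explicit forward induction on the discrete time step, with the weaker inductive hypothesis of a pointwise-in-time $\Sigma$-type bound \eqref{eq:claimL2}, and re-derive the Strichartz bound on each finite interval $\overline{[a\tau,c\tau]}$ by an absorption that is legitimate because that norm is a finite sum of finite $L^{r_0}$-norms (each $Z_\tau(k\tau)$ is frequency-localized by $\Pi_\tau$). These are two lightly disguised versions of the same discrete continuity argument, and they close for the same reason: the Duhamel sum at step $n$ only involves $Z_\tau(k\tau)$ for $k<n$, so the dangerous $\ell^\gamma$ factor is controlled by what is already known. Your version makes this causality explicit and inducts on the weaker $L^2$ quantity; the paper's version inducts (implicitly, via maximality) directly on the Strichartz quantity, which makes the constant tracking slightly cleaner. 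Either is a valid way to write it up; the constants in \eqref{eq:loc-stab1} come out the same, and \eqref{eq:loc-stab2} follows by one more application of Strichartz in both cases.

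One small remark on bookkeeping you may want to tighten if you write this up in full: when you assert that the pointwise $L^2$ bound at step $c+1$ is ``a constant depending only on $d,\si$ times $\|Z_\tau\|_{X(\{a\tau\})}$,'' the induction only closes if that constant is $\le 4K_0$. This does work out (the homogeneous contribution is $\le(K_0-1)(1+C\tau^{1/2})$ per admissible pair and the nonlinear contribution is absorbed into a $\tfrac12$ via \eqref{assump}), but the inequality $C(d,\si)\le 4K_0$ is exactly where the requirement that $K_1$ depend \emph{only} on $d,\si$ (and not on $\phi$, $\tau$, or $I$) has to be checked; it is not automatic from ``$K_1$ large enough'' without noting that the implicit constant in the H\"older/Strichartz chain is itself universal.
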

\begin{remark}\label{rem:series}
  We call the above result  a \emph{local} stability, since we require
  some smallness on
  \begin{equation*}
    \tau\sum_{n=a}^{b}\frac{1}{\<n\tau\>^{\gamma\delta(r_0)}}.
    \end{equation*}
    Obviously, the series is convergent, since $\gamma\delta(r_0)>1$,
    and this smallness means that either $b$ is finite and $b-a$ is
    sufficiently small, or $b=\infty$ and $a$ is sufficiently large, a
    case which corresponds to a local result near $t=\infty$ (in the
    spirit of the construction of wave operators in scattering theory,
    see e.g \cite{CazCourant,Ginibre}).  More precisely, for finite
    $b$, we may use the estimate
    \begin{equation*}
      \tau\sum_{n=a}^{b}\frac{1}{\<n\tau\>^{\gamma\delta(r_0)}}\le
      \tau\sum_{n=a}^{b}1=(b-a)\tau=|I|.
    \end{equation*}
    For $a\ge 1$ (and large), we may use instead,
     \begin{equation*}
      \tau\sum_{n=a}^{b}\frac{1}{\<n\tau\>^{\gamma\delta(r_0)}}\le
      \tau\sum_{n=a}^{\infty}\frac{1}{\(n\tau\)^{\gamma\delta(r_0)}}=
      \tau^{1-\gamma\delta(r_0)}
      \sum_{n=a}^{\infty}\frac{1}{n^{\gamma\delta(r_0)}}\lesssim
       \frac{1}{(a\tau)^{\gamma\delta(r_0)-1}}.
    \end{equation*}
  \end{remark}

\begin{proof}
Consider the set
    \begin{equation*}
    \Lambda = \Big\{ N \in \mathbb{N};
    \max_{A\in \{\Id,\nabla,J\}}\sum_{(q,r)\in
      \{(q_0,r_0),(\infty,2)\}}\hspace{-6mm}\|AZ_{\tau}\|_{\ell^{q} ([a\tau, (N+a)\tau]; L^r)}
    \le 4 K_0 \|Z_\tau\|_{X(\{a\tau\})} \Big\}.
    \end{equation*}
If $\Lambda$ is an infinite set, then the conclusion of proposition
follows easily, from the estimates presented below, based on
Strichartz estimates.

Suppose that $\Lambda$ is a finite set, and let $N_*$ be the largest element of $\Lambda$.
First we verify that the set $\Lambda$ is non-empty.
Indeed, by the definitions of $Z_{\tau}$ and $K_0$, we find that, for
$A\in \{\Id,\nabla\}$,
\begin{align*}
  \tau^{\frac{1}{q_0}} \| A Z_{\tau}(a\tau)\|_{L^{r_0}} + &\|
  AZ_{\tau}(a\tau)\|_{L^2}=\tau^{\frac{1}{q_0}} \|S_{\tau}(0)
AZ_\tau(a\tau)\|_{L^{r_0}}
 + \| S_{\tau}(0) A Z_\tau(a\tau)\|_{L^2} \\
  &\le \|S_{\tau}(\cdot) A Z_\tau(a\tau)\|_{\ell^{q_0}(\tau\Z;
    L^{r_0})} + \| S_{\tau}(\cdot) AZ_\tau(a\tau)\|_{\ell^{\infty} (\tau\Z;L^2)}\\
  &\le (K_0-1) \|AZ_\tau(a\tau)\|_{L^2}\le (K_0-1)\|Z_\tau\|_{X(\{a\tau\})}.
\end{align*}
While for $A=J$, applying Lemma \ref{lem:cutoff}, we get
\begin{align*}
 &\quad  \tau^{\frac{1}{q_0}} \| J(a\tau) Z_{\tau}(a\tau)\|_{L^{r_0}} + \|J(a\tau)Z_{\tau}(a\tau)\|_{L^2}\\
  &\le \tau^{\frac{1}{q_0}} \|S_{\tau}(0)
J(a\tau)Z_\tau(a\tau)\|_{L^{r_0}} + \| S_{\tau}(0)J(a\tau)Z_\tau(a\tau)\|_{L^2} \\
&\quad+C\tau^{1/2}\(\tau^{\frac{1}{q_0}}\|Z_\tau(a\tau)\|_{L^{r_0}}+
\|Z_\tau(a\tau)\|_{L^2} \)\\
  &\le \|S_{\tau}(\cdot) J(a\tau) Z_\tau(a\tau)\|_{\ell^{q_0}(\tau\Z;
    L^{r_0})} + \| S_{\tau}(\cdot) J(a\tau) Z_\tau(a\tau)\|_{\ell^{\infty} (\tau\Z;L^2)}\\
    &\quad+C\tau^{1/2}(K_0-1)\|Z_\tau\|_{X(\{a\tau\})}\\
  &\le (K_0-1) \|J(a\tau) Z_\tau(a\tau)\|_{L^2}+C\tau^{1/2}(K_0-1)\|Z_\tau\|_{X(\{a\tau\})}\\
  &\le (1+C\tau^{1/2}) (K_0-1)\|Z_\tau\|_{X(\{a\tau\})}\\
  &< 4K_0\|Z_\tau\|_{X(\{a\tau\})},
\end{align*}
when $\tau\le \tau_0:=1/4C^2$, where $C$ is the  constant in Lemma \ref{lem:cutoff}. Therefore, $0\in \Lambda$.

For $A\in\{\Id,\nabla\}$,  and $(q,
r)\in\{(q_0, r_0), (\infty, 2)\}$,
    \begin{equation}\label{eq:1756}
    \begin{aligned}
    &\quad \|AZ_\tau(n\tau) \|_{\ell^q(a\le n \le a+N_* +1;L^{r})}\\
    &    \le \left\| A S_{\tau}(n\tau-a\tau)
      Z_\tau(a\tau)\right\|_{\ell^{q} (a\le n \le a+N_* +1; L^{r})}\\
&    \quad +\left\|\tau   A\sum_{k=a}^{n-1} S_{\tau} (n\tau -k\tau)
        \frac{N(\tau) -\Id}{\tau} Z_{\tau}(k \tau) \right\|_{\ell^{q}
        (a +1\le n \le a+N_* +1; L^{r} )} \\
&\le\left\| S_{\tau}(n\tau-a\tau)  AZ_\tau(a\tau)\right\|_{\ell^{q} (a\le n \le a+N_* +1; L^{r})}\\
&    \quad +\left\|\tau  \sum_{k=a}^{n-1} S_{\tau} (n\tau -k\tau)A
        \frac{N(\tau) -\Id}{\tau} Z_{\tau}(k \tau) \right\|_{\ell^{q}
        (a +1\le n \le a+N_* +1; L^{r} )}\\
&\le (K_0-1)\|Z_\tau\|_{X(\{a\tau\})}+Q_1,
    \end{aligned}
    \end{equation}
where the last term $Q_1$ is bounded by applying the
Strichartz estimate \eqref{eq-st-8} as follows:
    \begin{equation*}
    \begin{aligned}
   Q_1 &=\left\|\tau   \sum_{k=a}^{n-1} S_{\tau} (n\tau -k\tau)A
        \frac{N(\tau) -\Id}{\tau} Z_{\tau}(k \tau) \right\|_{\ell^{q}
        (a +1\le n \le a+N_* +1; L^{r} )}\\
    &\lesssim \left\| A\frac{N(\tau) -\Id}{\tau} Z_{\tau}(n \tau)
    \right\|_{\ell^{q_0'} (a\le n \le a+ N_* ; L^{r_0'})}.
    \end{aligned}
    \end{equation*}
To estimate the right hand side, we apply Lemma~\ref{lem:propNL} and
H\"older inequality \eqref{eq:holder}. We omit the information $a\le
n\le a+N_*$ to ease notations:
\begin{align*}
  \left\| A\frac{N(\tau) -\Id}{\tau} Z_{\tau}
    \right\|_{\ell^{q_0'}  L^{r_0'}}
  &\lesssim \|Z_{\tau}\|_{\ell^\gamma
    L^{r_0}}^{2\si}\|A Z_{\tau}\|_{\ell^{q_0}L^{r_0}}\\
  &\lesssim \left\|
    \<n\tau\>^{-\delta(r_0)}\right\|_{\ell^\gamma(a\le n\le a+N^*)}^{2\si}
    \|Z_{\tau}\|_{\Gamma([a\tau,(a+N_*)\tau])}^{2\si} \|AZ_{\tau}\|_{\ell^{q_0}L^{r_0}},
\end{align*}
where we have used \eqref{eq:ellqLr}, and (weighted)
Gagliardo-Nirenberg inequality for the
last estimate. Since $N_*\in \Lambda$, we infer that
\begin{align*}
  \left\| A\frac{N(\tau) -\Id}{\tau} Z_{\tau}
    \right\|_{\ell^{q_0'}  L^{r_0'}}\lesssim \left\|
    \<n\tau\>^{-\delta(r_0)}\right\|_{\ell^\gamma(a\le n\le
  a+N^*)}^{2\si} \(4K_0\|Z_\tau\|_{X(\{a\tau\})}\)^{2\si+1}.
\end{align*}
We infer that there exists $K_1$ depending only on $d$ and $\si$ such
that, for any $A\in \{\Id,\nabla\}$,
\begin{align*}
 \sum_{(q,r)\in \{(q_0,r_0),(\infty,2)\}} \|AZ_\tau &\|_{\ell^{q}(a\le n\le
  a+N_*+1;L^{r})}
  \le 2K_0 \|Z_\tau\|_{X(\{a\tau\})} \\
                  &+ K_1 \left\|
    \<n\tau\>^{-\delta(r_0)}\right\|_{\ell^\gamma(a\le n\le
  a+N_*)}^{2\si} \(4K_0\|Z_\tau\|_{X(\{a\tau\})}\)^{2\si+1}.
\end{align*}
We conclude with the case $A=J$. We have from \eqref{eq:discrete-duhamel},
\begin{align*}  J(n\tau)Z_{\tau}(n \tau)&=  J(n\tau)S_{\tau}((n-a) \tau) Z_\tau(a\tau)
 \\
  &\quad + \tau\sum_{k=a}^{n-1} J(n\tau)S_{\tau}(n\tau-k \tau)
  \frac{N(\tau) -\Id}{\tau}
  Z_{\tau}(k \tau).
\end{align*}
We proceed like in the proof of Corollary~\ref{cor:stri-disc}:
$J(t)=S(t-t_0)J(t_0)S(t_0-t),$
hence
\begin{align}
  J(n\tau)S_{\tau}((n-a) \tau) & = J(n\tau)S_{\tau}((n-a) \tau)
 \Pi_{\tau/4} \nonumber\\
&=[J(n\tau),\Pi_\tau]S ((n-a) \tau)
 \Pi_{\tau/4} + \Pi_\tau J(n\tau)S ((n-a) \tau)
 \Pi_{\tau/4} \nonumber\\
&=[J(n\tau),\Pi_\tau]S_{\tau/4} ((n-a) \tau)
 + \Pi_\tau S ((n-a) \tau)J(a\tau)
 \Pi_{\tau/4} \nonumber\\
&=[J(n\tau),\Pi_\tau]S_{\tau/4} ((n-a) \tau)
 + S_\tau ((n-a) \tau)[J(a\tau), \Pi_{\tau/4} ]\nonumber\\
&\quad+  S_\tau ((n-a) \tau)J(a\tau).\label{eq:factorPi}
\end{align}
Now, $J$ acts on gauge invariant nonlinearities like the
gradient, see \eqref{eq:Jder}, and we observe that $(N(\tau)-\Id)Z_\tau$ enjoys
this gauge invariance. We infer from \eqref{eq:factorPi} and
Lemma~\ref{lem:cutoff} that the above estimates, proven in the case
$A\in \{\Id,\nabla\}$, remains essentially the same in the case
$A=J$:
\begin{align*}
&\quad \sum_{(q,r)\in \{(q_0,r_0),(\infty,2)\}} \|J Z_\tau \|_{\ell^{q}(a\le n\le
  a+N_*+1;L^{r})}\\
  &\le (1+C\tau^{1/2})\Big(2K_0 \|Z_\tau\|_{X(\{a\tau\})}\Big.\\
  &\qquad\qquad\qquad\quad\Big.+ K_1 \left\|
    \<n\tau\>^{-\delta(r_0)}\right\|_{\ell^\gamma(a\le n\le
  a+N_*)}^{2\si} \(4K_0\|Z_\tau\|_{X(\{a\tau\})}\)^{2\si+1}\Big)\\
  &\le 3K_0 \|Z_\tau\|_{X(\{a\tau\})}+ \frac{3}{2}K_1 \left\|
    \<n\tau\>^{-\delta(r_0)}\right\|_{\ell^\gamma(a\le n\le
  a+N_*)}^{2\si} \(4K_0\|Z_\tau\|_{X(\{a\tau\})}\)^{2\si+1},
\end{align*}
when $\tau\le \tau_0$.
The maximality of $N_*$ implies that
\begin{equation*}
  3K_1 \left\|
    \<n\tau\>^{-\delta(r_0)}\right\|_{\ell^\gamma(a\le n\le
  a+N_*)}^{2\si} \(4K_0\|Z_\tau\|_{X(\{a\tau\})}\)^{2\si+1}>2K_0 \|Z_\tau\|_{X(\{a\tau\})},
\end{equation*}
i.e.,
\[6K_1 \left\|
    \<n\tau\>^{-\delta(r_0)}\right\|_{\ell^\gamma(a\le n\le
  a+N_*)}^{2\si} >\(4K_0\|Z_\tau\|_{X(\{a\tau\})}\)^{-2\si},\]
since if the reverse inequality were true, then $N_*+1$ would belong to
$\Lambda$. Keeping this constant $K_1$ in the statement of
Proposition~\ref{prop:local-stab}, we see that
$I\subsetneqq [a\tau, (a+N_*)\tau]$, and the result follows, by using
Strichartz estimates once more
in order to cover all admissible pairs $(q,r)$.
\end{proof}


\section{More estimates}\label{sec:more}

The following lemma is the large time counterpart of
\cite[Lemma~5.2]{ChoiKoh2021}, where we also change the numerology:
\begin{lemma}\label{lem:more1}
For any time interval $I$, and $A\in \{\Id,\nabla,J\}$,
 \begin{align*}
    &\left\| A\(\frac{N(\tau)-\Id}{\tau} v - \frac{N(\tau)-\Id}{\tau}
      w \)\right\|_{\ell^{q_0'} (I; L^{r_0'})} \\
    &\quad\lesssim  \(\|v\|_{\ell^{\gamma} (I;L^{r_0})}^{2\si} +
\|w\|_{\ell^{\gamma} (I;L^{r_0})}^{2\si} \)\| A(v-w)\|_{\ell^{q_0}
      (I;L^{r_0})} \\
   &\qquad+ \( \|v\|_{\ell^\gamma(I;L^{r_0})}^{2\si-1}+
      \|w\|_{\ell^\gamma(I;L^{r_0})}^{2\si-1} \)
      \|v-w\|_{\ell^\gamma(I;L^{r_0})}\|A w\|_{\ell^{q_0}(I;
     L^{r_0})} \\
   &\qquad + \tau \|v-w\|_{\ell^\gamma(I;L^{r_0})}
    \|A w\|_{\ell^{q_0}(I; L^{r_0})} \left\|  |v|^{4\si-1}+
     |w|^{4\si-1}\right\|_{\ell^\frac{\gamma}{2\si-1}(I;L^{\frac{r_0}{2\si-1}})}.
    \end{align*}
 \end{lemma}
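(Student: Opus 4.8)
The plan is to follow the template of the continuous-time estimates used in Theorem~\ref{theo:B} (the computations around equation~\eqref{eq:holder}), keeping careful track of how the operator $A\in\{\Id,\nabla,J\}$ acts on the nonlinearity, and of the fact that now the nonlinear term carries the extra factor $N(\tau)$. First I would establish a pointwise differentiation formula for $A\bigl(\tfrac{N(\tau)-\Id}{\tau}v\bigr)$. Writing $\tfrac{N(\tau)-\Id}{\tau}v = G(|v|^2)v$ with $G(y)=\tfrac{e^{-i\tau y^\si}-1}{\tau}$, we have, by \eqref{eq:gradientNL} for $A=\nabla$ (and the analogous identity \eqref{eq:Jder} for $A=J$, which acts on gauge-invariant nonlinearities exactly like a derivative; the commutator $[\partial_j,J]$ is order zero and harmless), a bound of the form
\[
\left|A\Bigl(\tfrac{N(\tau)-\Id}{\tau}v\Bigr)\right|\lesssim |v|^{2\si}|Av|,
\]
and similarly for $w$. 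The key point in the difference estimate is the pointwise inequality: setting $F_\tau(z):=\tfrac{N(\tau)-\Id}{\tau}z$, one shows
\[
\bigl|A F_\tau(v)-A F_\tau(w)\bigr|
\lesssim \bigl(|v|^{2\si}+|w|^{2\si}\bigr)|A(v-w)|
+\bigl(|v|^{2\si-1}+|w|^{2\si-1}\bigr)|v-w|\,|Aw|
+\tau\bigl(|v|^{4\si-1}+|w|^{4\si-1}\bigr)|v-w|\,|Aw|.
\]
The first two terms are the usual ones coming from differentiating $|z|^{2\si}z$ (as in the proof of Theorem~\ref{theo:B}); the third, with the explicit factor $\tau$, is the genuinely new contribution and reflects that $\partial_z F_\tau$ and $\partial_{\bar z}F_\tau$ differ from the derivatives of $|z|^{2\si}z$ by terms of size $\tau|z|^{4\si}$. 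Concretely, one Taylor-expands $G$ and its derivatives: $G(y)=-iy^\si+O(\tau y^{2\si})$, $G'(y)=-i\si y^{\si-1}+O(\tau y^{2\si-1})$, so that $F_\tau$ agrees with $-i|z|^{2\si}z$ up to a $C^1$ correction whose gradient is controlled by $\tau|z|^{4\si}$; subtracting the $v$ and $w$ versions and using $2\si\ge 1$ to handle $\bigl||v|^{2\si-1}-|w|^{2\si-1}\bigr|\lesssim(|v|^{2\si-2}+|w|^{2\si-2})|v-w|$ (or directly, without expanding, by a mean-value/convexity argument on $|z|^{2\si}$ and $|z|^{4\si-1}$) yields the displayed pointwise bound. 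This is the step I expect to be the main obstacle — not conceptually deep, but one must be careful that all the exponents $2\si-1$, $4\si-1$, $2\si-2$ are nonnegative (guaranteed by $2\si\ge 1$, with the borderline $\si=1/2$ requiring the argument to be phrased without differentiating $|z|^{2\si-1}$) and that the $\tau$ factor is genuinely gained.

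Once the pointwise bound is in hand, the lemma follows by taking $\ell^{q_0'}(I;L^{r_0'})$ norms and applying the discrete Hölder inequality \eqref{eq:holder} term by term, exactly as in \eqref{eq:holder} and the computations of Theorem~\ref{theo:B}: the first term gives $\bigl(\|v\|_{\ell^\gamma L^{r_0}}^{2\si}+\|w\|_{\ell^\gamma L^{r_0}}^{2\si}\bigr)\|A(v-w)\|_{\ell^{q_0}L^{r_0}}$; the second gives $\bigl(\|v\|_{\ell^\gamma L^{r_0}}^{2\si-1}+\|w\|_{\ell^\gamma L^{r_0}}^{2\si-1}\bigr)\|v-w\|_{\ell^\gamma L^{r_0}}\|Aw\|_{\ell^{q_0}L^{r_0}}$, where one distributes the Hölder exponents as $\tfrac1{q_0'}=\tfrac1{q_0}+\tfrac{2\si}{\gamma}$ and $\tfrac1{r_0'}=\tfrac{2\si+1}{r_0}$ following \eqref{eq:gamma} and the line preceding it; the third, carrying the factor $\tau$, is split as $|v|^{4\si-1}+|w|^{4\si-1}$ in $\ell^{\gamma/(2\si-1)}(I;L^{r_0/(2\si-1)})$ against $|v-w|$ in $\ell^\gamma(I;L^{r_0})$ and $|Aw|$ in $\ell^{q_0}(I;L^{r_0})$, and one checks that the Hölder exponents match: $\tfrac{2\si-1}{\gamma}+\tfrac1\gamma+\tfrac1{q_0}=\tfrac{2\si}{\gamma}+\tfrac1{q_0}=\tfrac1{q_0'}$ in time, and $\tfrac{2\si-1}{r_0}+\tfrac1{r_0}+\tfrac1{r_0}=\tfrac{2\si+1}{r_0}=\tfrac1{r_0'}$ in space, consistent with the other terms. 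Summing the three contributions gives precisely the claimed estimate.

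I would remark that no integrability of the weight in time is invoked here (unlike in Theorem~\ref{theo:B}); this is purely an algebraic Hölder bookkeeping lemma, and the point of isolating it is that the new $\tau$-term with $|v|^{4\si-1}+|w|^{4\si-1}$ will later be absorbed using the a~priori bounds of Proposition~\ref{prop:local-stab} together with Bernstein-type estimates on the frequency cut-off (so that the high power $4\si-1$, which would be energy-supercritical if left alone, is tamed by the $\tau^{1/2}$-type gain from $\Pi_\tau$, in the spirit of Lemma~\ref{lem:source}). The only place where regularity of $v,w$ matters is that they should be weakly differentiable for $A=\nabla$ and in $\Sigma$ for $A=J$, which is consistent with the setting in which the lemma will be applied.
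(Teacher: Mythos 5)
Your plan — differentiate, establish a pointwise bound, then apply the discrete H\"older inequality term by term — is exactly the paper's structure, and your final pointwise inequality and H\"older bookkeeping (including the exponent checks $\tfrac{2\si}{\gamma}+\tfrac1{q_0}=\tfrac1{q_0'}$ and $\tfrac{2\si+1}{r_0}=\tfrac1{r_0'}$) are correct. Two technical points in the route you sketch deserve a warning, though.

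First, the ``Taylor-expand $G$'' route as you phrase it, namely writing $F_\tau(z)=-i|z|^{2\si}z+R_\tau(z)$ with $|\partial_z R_\tau|,|\partial_{\bar z}R_\tau|=O(\tau|z|^{4\si})$ and then subtracting the $v$- and $w$-versions of $R_\tau$, does \emph{not} give the claimed pointwise bound: the rearrangement $\partial_z R_\tau(v)A(v-w)+\bigl(\partial_z R_\tau(v)-\partial_z R_\tau(w)\bigr)Aw+\cdots$ produces an extraneous term of size $\tau|v|^{4\si}|A(v-w)|$, which is neither in the lemma nor dominated by $(|v|^{2\si}+|w|^{2\si})|A(v-w)|$. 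The paper avoids this by never splitting $F_\tau$: it differentiates $F_\tau$ directly and uses $|e^{-i\tau|z|^{2\si}}|=1$ so that $|\partial_z F_\tau(z)|\lesssim|z|^{2\si}$ \emph{exactly}, then inserts the cross terms $|w|^{2\si}\nabla w\,e^{-i\tau|v|^{2\si}}$ (and its $\partial_{\bar z}$-analogue) in the differences $e^{-i\tau|v|^{2\si}}|v|^{2\si}\nabla v-e^{-i\tau|w|^{2\si}}|w|^{2\si}\nabla w$. This is exactly the ``directly, without expanding, by a mean-value argument'' alternative you also mention; that alternative is the one that works.

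Second, the inequality $\bigl||v|^{2\si-1}-|w|^{2\si-1}\bigr|\lesssim(|v|^{2\si-2}+|w|^{2\si-2})|v-w|$ you invoke requires $\si\ge1$ (for $\tfrac12\le\si<1$ the map $y\mapsto y^{2\si-1}$ is only H\"older, and the displayed right-hand side degenerates near zero); it is also not needed. The only mean-value bounds the argument uses are $\bigl||v|^{2\si}-|w|^{2\si}\bigr|\lesssim(|v|^{2\si-1}+|w|^{2\si-1})|v-w|$ (valid since $2\si\ge1$) and $\bigl|e^{-i\tau|v|^{2\si}}-e^{-i\tau|w|^{2\si}}\bigr|\le\tau\bigl||v|^{2\si}-|w|^{2\si}\bigr|$; the factor $|v|^{4\si-1}+|w|^{4\si-1}$ then arises from the product $|w|^{2\si}\bigl(|v|^{2\si-1}+|w|^{2\si-1}\bigr)$, not from a mean-value argument on $|z|^{4\si-1}$. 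With these two corrections your argument is the paper's.
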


\begin{proof}
In view of Lemma~\ref{lem:propNL}, H\"older inequality yields
\begin{align*}
\Big\| \frac{N(\tau)-\Id}{\tau} v - \frac{N(\tau)-\Id}{\tau} w
  \Big\|_{\ell^{q_0'} (I; L^{r_0'})}    &\lesssim \big(\|v\|_{\ell^{\gamma} (I;L^{r_0})}^{2\si} +
\|w\|_{\ell^{\gamma} (I;L^{r_0})}^{2\si} \big)\| v-w\|_{\ell^{q_0} (I;L^{r_0})} .
 \end{align*}
Next, by differentiating and rearranging, we have
    \begin{align*}
    &\quad\nabla \left( \frac{N(\tau)-\Id}{\tau} v\right) -\nabla \left( \frac{N(\tau)-\Id}{\tau} w\right) \\
    &= \nabla \left( \frac{e^{-i\tau |v|^{2\si}}-1}{\tau} v\right) -\nabla \left( \frac{e^{-i\tau  |w|^{2\si}}-1}{\tau} w\right) \\
    &= -i\si \(e^{-i\tau |v|^{2\si}}|v|^{2\si} \nabla v
      -e^{-i\tau  |w|^{2\si}}  |w|^{2\si} \nabla w  \) \\
   &\quad -i\si \(e^{-i\tau |v|^{2\si}}|v|^{2\si-2}v^2 \nabla
     \bar v
      -e^{-i\tau  |w|^{2\si} } |w|^{2\si-2} w^2\nabla \bar w  \) \\
      &\quad+ \(\( \frac{e^{-i\tau |v|^{2\si}}-1}{\tau} \) -
        \( \frac{e^{-i\tau |w|^{2\si}}-1}{\tau} \) \)\nabla w +
        \( \frac{e^{-i\tau |v|^{2\si}}-1}{\tau} \)( \nabla v - \nabla w).
    \end{align*}
Lemma~\ref{lem:propNL} now yields
    \begin{equation}\label{eq:1402}
    \begin{aligned}
    &\left\| \nabla \( \frac{N(\tau)-\Id}{\tau} v\) -\nabla \( \frac{N(\tau)-\Id}{\tau} w\) \right\|_{\ell^{q_0'}(I; L^{r_0'})} \\
    &\quad \le \si \left\| e^{-i\tau |v|^{2\si}}|v|^{2\si} \nabla v
      -e^{-i\tau  |w|^{2\si}}  |w|^{2\si} \nabla w\right\|_{\ell^{q_0'}(I; L^{r_0'})} \\
   &\qquad + \si \left\| e^{-i\tau |v|^{2\si}}|v|^{2\si-2}v^2 \nabla
     \bar v
      -e^{-i\tau  |w|^{2\si} } |w|^{2\si-2} w^2\nabla \bar w  \right\|_{\ell^{q_0'}(I; L^{r_0'})} \\
    &\qquad + \left\| \left||v|^{2\si}-|w|^{2\si}\right|\nabla w
    \right\|_{\ell^{q_0'}(I; L^{r_0'})} + \left\| |v|^{2\si}|\nabla
      v-\nabla w|    \right\|_{\ell^{q_0'}(I; L^{r_0'})} .
    \end{aligned}
    \end{equation}
By inserting the terms $|w|^{2\si}\nabla w e^{-i\tau |v|^{2\si}}$ and
$|w|^{2\si-2} w^2\nabla \bar w e^{-i\tau |v|^{2\si}}$ in the first and
second lines of the right hand side of \eqref{eq:1402}, respectively,
we obtain, since $\si\ge 1/2$,
    \begin{equation}\label{eq:1503}
      \begin{aligned}
  &\left\| \nabla \( \frac{N(\tau)-\Id}{\tau} v\) -\nabla \( \frac{N(\tau)-\Id}{\tau} w\) \right\|_{\ell^{q_0'}(I; L^{r_0'})} \\
  &\quad \lesssim     \left\| |v|^{2\si}(\nabla v-\nabla
    w)\right\|_{\ell^{q_0'}(I; L^{r_0'})}   \\
  & \quad\quad+
\left\| \(|v|^{2\si-1}+ |w|^{2\si-1}\)\(|v|-|w|\)\nabla  w\right\|_{\ell^{q_0'}(I;
  L^{r_0'})} \\
&\quad\quad+ \left\| |w|^{2\si}\nabla
  w\(e^{-i\tau |v|^{2\si}}- e^{-i\tau  |w|^{2\si}}  \)\right\|_{\ell^{q_0'}(I; L^{r_0'})}.
    \end{aligned}
  \end{equation}
  The first term on the right hand side is estimated by H\"older
  inequality \eqref{eq:holder},
  \begin{align*}
    \left\| |v|^{2\si}(\nabla v-\nabla
    w)\right\|_{\ell^{q_0'}(I; L^{r_0'})} \le \|v\|_{\ell^\gamma(I;L^{r_0})}^{2\si}
    \|\nabla v-\nabla    w\|_{\ell^{q_0}(I; L^{r_0})} .
  \end{align*}
  Similarly,
  \begin{align*}
    &\left\| \(|v|^{2\si-1}+ |w|^{2\si-1}\)\(|v|-|w|\)\nabla  w\right\|_{\ell^{q_0'}(I;
      L^{r_0'})} \\
    &\quad\lesssim \( \|v\|_{\ell^\gamma(I;L^{r_0})}^{2\si-1}+
      \|w\|_{\ell^\gamma(I;L^{r_0})}^{2\si-1} \)
      \|v-w\|_{\ell^\gamma(I;L^{r_0})}\|\nabla w\|_{\ell^{q_0}(I; L^{r_0})} .
  \end{align*}
For the last term in \eqref{eq:1503}, we notice the pointwise
estimate, using again $\si\ge 1/2$,
\begin{equation*}
    \left||w|^{2\si}\nabla
  w\(e^{-i\tau |v|^{2\si}}- e^{-i\tau  |w|^{2\si}}  \)\right|
    \lesssim \tau|w|^{2\si} |\nabla w|\(|v|^{2\si-1} + |w|^{2\si-1} \) |v-w|.
\end{equation*}
H\"older inequality now yields
\begin{align*}
 & \left\| |w|^{2\si}\nabla  w\(e^{-i\tau |v|^{2\si}}-
  e^{-i\tau  |w|^{2\si}}  \)\right\|_{\ell^{q_0'}(I; L^{r_0'})}\\
  &\quad\lesssim \tau \|v-w\|_{\ell^\gamma(I;L^{r_0})}
    \|\nabla w\|_{\ell^{q_0}(I; L^{r_0})}
    \left\| |v|^{4\si-1}+|w|^{4\si-1}\right\|_{\ell^\frac{\gamma}{2\si-1}(I;L^{\frac{r_0}{2\si-1}})}.
\end{align*}
The lemma follows from the above estimates, for $A\in
\{\Id,\nabla\}$. The case $A=J$ is similar to the case $A=\nabla$, since
$J(t)=S(t)xS(-t)$ and $J$ acts on gauge invariant nonlinearities like
the gradient, \eqref{eq:Jder}.
\end{proof}

So far, we have supposed $\si\ge 1/2$. From the next proposition on,
we need to require $\si>1/2$, as pointed out in the proof below.

\begin{proposition}\label{prop:discrete-stability}
Suppose $\si>1/2$. Let $(q,r)$ be an admissible pair. Consider an
  interval $I$ of the form $I=\overline{[a\tau,b\tau)}$, with $a,b\in \N\cup\{\infty\}$ such
  that $0\le a<b\le \infty$. There exists $K_2$ such that if
\begin{equation}\label{eq:K2}
 M   \left\|\<n\tau\>^{-\delta(r_0)}\right\|_{\ell^\gamma(I)}\le
     K_2,
   \end{equation}
   then the following holds.
  \begin{itemize}
    \item There exists $C=C(d,\si,q)$ such that if $\phi_1,\phi_2\in \Sigma$ with
      $\|Z_\tau^{\phi_j}\|_{X(\{a\tau\})}\le M$, $j=1,2$, then
  \begin{equation}
    \label{eq:lipschitz}
 Q_1=\max_{A\in \{\Id,\nabla,J\}} \|A(n\tau)\(Z^{\phi_2}_\tau(n\tau) -
 Z^{\phi_1}_\tau(n\tau)
  \)\|_{\ell^q(I;L^r)}\le C
 \|Z_\tau^{\phi_2}-Z_\tau^{\phi_1}\|_{X(\{a\tau\})},
  \end{equation}
  where $Z_\tau^{\phi_j}(n\tau)=(S_\tau(\tau)N(\tau))^n\Pi_\tau \phi_j$.
  \medskip

\item If $\psi\in \Sigma\cap H^2(\R^d)$, with $\|Z_\tau^{\psi}\|_{X(\{a\tau\})}\le
  M$, then there exists a constant $C=C(d,\si,M,\psi)>0$ such that
  \begin{equation}
    \label{eq:local-error-H2}
   Q_2= \max_{A\in \{\Id,\nabla,J\}} \left\|A(n\tau) \(Z^\psi_\tau(n\tau)
-u^\psi(n\tau)\)\right\|_{\ell^q(I;L^r)}\le
     C \tau^{1/2}.
  \end{equation}
\item Assume $\phi\in \Sigma$ satisfies $\|\phi\|_{X(\{a\tau\})}\le
  M/2$. denote $u^\phi$ by the solution of \eqref{eq:NLS} with the initial condition $u_{\mid
  t=a\tau}=\phi$, i.e., $u$ is given by \eqref{eq:duhamel} with
$t_0=a\tau$. Similarly in the discrete version, let
$Z^\phi_\tau(n\tau)$ $(n>a)$ be given  by \eqref{eq:discrete-duhamel} with  $Z_\tau(a\tau)$ replaced by $\phi$. Then it holds that
  \begin{equation}
    \label{eq:loc-stab-Sigma}
    \lim_{\tau\to 0}\max_{A\in \{\Id,\nabla,J\}}
    \left\|A(n\tau)\(Z^\phi_\tau(n\tau)-u^\phi(n\tau)\) \right\|_{\ell^\infty(I;L^2)}=0.
  \end{equation}
\end{itemize}
\end{proposition}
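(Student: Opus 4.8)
The plan is to prove the three items by a unified argument based on discrete Duhamel's formula, discrete Strichartz estimates (Corollary~\ref{cor:stri-disc}), the bootstrap/localization scheme already employed in Proposition~\ref{prop:local-stab}, and the nonlinear estimates of Lemma~\ref{lem:more1} together with Lemma~\ref{lem:A4} and Lemma~\ref{lem:cutoff}. The common mechanism is that under the smallness hypothesis \eqref{eq:K2} on $\|\<n\tau\>^{-\delta(r_0)}\|_{\ell^\gamma(I)}$, the nonlinear contributions in the relevant discrete Duhamel formula can be absorbed into the left-hand side, leaving only the linear (initial-data) term and an $\O(\tau^{1/2})$ source. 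Throughout, I would use that Proposition~\ref{prop:local-stab} already provides, under \eqref{assump} (which follows from \eqref{eq:K2} after fixing $K_2$ small relative to $K_0,K_1$), the a priori bounds $\max_{A}\|AZ_\tau^{\phi_j}\|_{\ell^q(I;L^r)}\le K_q\|Z_\tau^{\phi_j}\|_{X(\{a\tau\})}\le K_q M$ for all admissible $(q,r)$, and in particular $\|Z_\tau^{\phi_j}\|_{\ell^\gamma(I;L^{r_0})}\lesssim M\|\<n\tau\>^{-\delta(r_0)}\|_{\ell^\gamma(I)}$ by weighted Gagliardo--Nirenberg \eqref{eq:GNlibre} as in \eqref{eq:apriori-univ}.

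For \eqref{eq:lipschitz}: write the discrete Duhamel formula \eqref{eq:discrete-duhamel} for $w:=Z_\tau^{\phi_2}-Z_\tau^{\phi_1}$ starting at time $a\tau$, apply $A\in\{\Id,\nabla,J\}$, and estimate by Corollary~\ref{cor:stri-disc} (for $A=J$ paying the usual $\tau^{1/2}$ commutator price via \eqref{eq:factorPi} and Lemma~\ref{lem:cutoff}, which is harmless). The nonlinear difference $A\big(\frac{N(\tau)-\Id}{\tau}Z_\tau^{\phi_2}-\frac{N(\tau)-\Id}{\tau}Z_\tau^{\phi_1}\big)$ is controlled by Lemma~\ref{lem:more1}: the first two terms there give $\big(\|Z_\tau^{\phi_1}\|_{\ell^\gamma}^{2\si}+\|Z_\tau^{\phi_2}\|_{\ell^\gamma}^{2\si}\big)\|Aw\|_{\ell^{q_0}L^{r_0}}$ plus a term with $\|w\|_{\ell^\gamma(I;L^{r_0})}\|AZ_\tau^{\phi_1}\|_{\ell^{q_0}L^{r_0}}$, and the last term carries a prefactor $\tau$ with a factor $\||Z_\tau^{\phi_j}|^{4\si-1}\|_{\ell^{\gamma/(2\si-1)}(L^{r_0/(2\si-1)})}$. \textbf{Here is where $\si>1/2$ is needed}: one must bound $\|Z_\tau\|_{\ell^{(4\si-1)\gamma/(2\si-1)}(I;L^{(4\si-1)r_0/(2\si-1)})}$ by a Strichartz norm for the numerical solution, and $(4\si-1)\gamma/(2\si-1)$ together with the corresponding spatial exponent must form (or be dominated by) an admissible pair with finite indices, which requires $2\si-1>0$ strictly and a Sobolev embedding $W^{1,\rho}\hookrightarrow L^{(4\si-1)r_0/(2\si-1)}$; the endpoint $\si=1/2$ degenerates (the exponent $\gamma/(2\si-1)$ blows up) and this is the reason for the strict inequality. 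Using $\|w\|_{\ell^\gamma(I;L^{r_0})}\lesssim\|\<n\tau\>^{-\delta(r_0)}\|_{\ell^\gamma(I)}\|w\|_{\chi(I)}$ and choosing $K_2$ small, all nonlinear terms are absorbed, yielding \eqref{eq:lipschitz}.

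For \eqref{eq:local-error-H2}: write $Z_\tau^\psi(n\tau)-u^\psi(n\tau)$ via the decomposition of the proof of Theorem~\ref{theo:CVL2} (terms $\mathcal A_1,\dots,\mathcal A_4$), now anchored at $a\tau$ and estimated in $\ell^q(I;L^r)$ with the operators $A\in\{\Id,\nabla,J\}$ inserted. The linear term $\mathcal A_1$ vanishes (same initial datum), $\mathcal A_2$ is $\O(\tau^{1/2})$ by Lemma~\ref{lem:bernstein} applied to $Au^\psi$ using $\psi\in\Sigma\cap H^2$ and the $H^2$-type bound on $Au^\psi$ from the second point of Theorem~\ref{theo:B}, $\mathcal A_3$ is absorbed exactly as the nonlinear difference above (now with $\eta$ replaced by the smallness from \eqref{eq:K2}), and $\mathcal A_4$ is the source term handled by the large-time Lemma~\ref{lem:A4} (applied with the operators $A$; here the $H^2$ control of $Au^\psi\in L^{q}(\R;W^{1,r})$ provided by Theorem~\ref{theo:B} is exactly what makes the $\tau^{1/2}$ bound uniform). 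Then \eqref{eq:local-error-H2} follows, with $C$ depending on $\psi$ through $\|Au^\psi\|_{L^q(\R;W^{1,r})}$. Finally \eqref{eq:loc-stab-Sigma} is obtained by density: given $\phi\in\Sigma$, approximate it in $\Sigma$ by $\psi_\delta\in\Sigma\cap H^2$, write
\[
Z_\tau^\phi-u^\phi=(Z_\tau^\phi-Z_\tau^{\psi_\delta})+(Z_\tau^{\psi_\delta}-u^{\psi_\delta})+(u^{\psi_\delta}-u^\phi),
\]
bound the first bracket in $X$-type norms by the discrete Lipschitz estimate \eqref{eq:lipschitz} (using $\|Z_\tau^{\psi_\delta}-Z_\tau^\phi\|_{X(\{a\tau\})}\lesssim\|\psi_\delta-\phi\|_\Sigma$ up to $t_0$-dependent constants since $a\tau$ ranges in a fixed set once $\tau\to0$ is taken along $I$ fixed, or more robustly by continuity of $\Pi_\tau$ and the numerical flow), the third by the continuous Lipschitz estimate (first point of Theorem~\ref{theo:B}), and the middle term by \eqref{eq:local-error-H2}; choosing $\delta$ small then $\tau$ small gives the limit. \textbf{The main obstacle} is the bookkeeping in \eqref{eq:lipschitz}: verifying that the extra $\tau$-weighted term from Lemma~\ref{lem:more1} really closes, i.e. that $(4\si-1)\gamma/(2\si-1)$ and its spatial partner are controlled by admissible Strichartz norms of $Z_\tau$ with finite indices and a legitimate Sobolev embedding — this is precisely where $\si=1/2$ fails and why the hypothesis is strengthened.
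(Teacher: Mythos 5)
Your overall architecture matches the paper's: discrete Duhamel, discrete Strichartz via Corollary~\ref{cor:stri-disc}, the Lipschitz-type nonlinear estimate of Lemma~\ref{lem:more1}, absorption under the smallness hypothesis \eqref{eq:K2}, and a density/triangle-inequality argument for \eqref{eq:loc-stab-Sigma}. You also correctly single out the $\tau$-weighted term in Lemma~\ref{lem:more1} as the place where $\si>1/2$ enters, and the degeneration of $\gamma/(2\si-1)$ at $\si=1/2$ is indeed the stated reason in the paper.

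Where you genuinely diverge from the paper — and where I think your proposal does not close as written — is in the mechanism for controlling
$\left\| |Z_\tau|^{4\si-1}\right\|_{\ell^{\gamma/(2\si-1)}(I;L^{r_0/(2\si-1)})}$,
equivalently $\|Z_\tau\|_{\ell^{(4\si-1)\gamma/(2\si-1)}(I;L^{(4\si-1)r_0/(2\si-1)})}^{4\si-1}$. You propose to reach the high spatial exponent $L^{(4\si-1)r_0/(2\si-1)}$ via a Sobolev embedding $W^{1,\rho}\hookrightarrow L^{(4\si-1)r_0/(2\si-1)}$ paired with an admissible Strichartz index. That requires simultaneously (i) the embedding to hold for some $\rho\ge 2$, (ii) $((4\si-1)\gamma/(2\si-1),\rho)$ to be admissible (or dominated by admissible), and (iii) control of $\|\nabla Z_\tau\|$ at that exact pair; verifying (i)--(iii) jointly for all $d\le 5$ and all $\si$ in the stated range is nontrivial and you do not check it. The paper instead uses Bernstein's inequality \eqref{eq-2-22}, exploiting the built-in frequency localization of the numerical solution ($Z_\tau$ has Fourier support in $|\xi|\lesssim\tau^{-1/2}$): one has
$\|Z_\tau(n\tau)\|_{L^{(4\si-1)r_0/(2\si-1)}}\lesssim \tau^{-\frac{2}{(4\si-1)q_0}}\|Z_\tau(n\tau)\|_{L^{r_0}}$,
then weighted Gagliardo--Nirenberg \eqref{eq:GNlibre} controls $\|Z_\tau(n\tau)\|_{L^{r_0}}$ by $\<n\tau\>^{-\delta(r_0)}\|Z_\tau\|_{\chi(I)}$, and the resulting $\tau^{-2/q_0}$ loss is cancelled (using the relation $2\si/\gamma=1-2/q_0$ from \eqref{eq:gamma}) by the $\tau$ prefactor coming from Lemma~\ref{lem:more1}. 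This is precisely the step where the frequency cutoff $\Pi_\tau$ earns its keep, and the same mechanism is what makes $\Pi_\tau u^\psi$ (also frequency localized) tractable in the proof of \eqref{eq:local-error-H2}. Your route replaces a structural feature of the scheme by a regularity argument that you would have to verify exponent by exponent; it is not obviously wrong, but it is the main gap to be filled.

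Two minor points. For \eqref{eq:local-error-H2}, the paper decomposes $Z_\tau^\psi-\Pi_\tau u^\psi$ starting from $a\tau$ into an initial-data term $S_\tau((n-a)\tau)(Z_\tau(a\tau)-u^\psi(a\tau))$ plus a difference term $\mathcal G_1$ (absorbed via Lemma~\ref{lem:more1}) plus a consistency term $\mathcal G_2$ (an $\mathcal A_4$-type term estimated by adapting Lemma~\ref{lem:A4} with the operators $A$ inserted, crucially using Theorem~\ref{theo:B} to get $Au^\psi\in L^q(\R;W^{1,r})$); your description "$\mathcal A_1$ vanishes" assumes $Z_\tau(a\tau)=u^\psi(a\tau)$, which holds at $a\tau=0$ but not on subsequent subintervals, where an induction over intervals is needed to propagate the initial error. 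For \eqref{eq:loc-stab-Sigma}, your worry about $t_0$-dependent constants when comparing $X(\{a\tau\})$ and $\Sigma$ norms is avoided in the paper because both $u^\phi$ and $Z_\tau^\phi$ are taken to agree exactly with $\phi$ at $t=a\tau$, so one works directly with $\|\phi-\psi\|_{X(\{a\tau\})}$ throughout.
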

\begin{remark}\label{rem:series2}
  In view of Remark~\ref{rem:series}, the assumption \eqref{eq:K2}
  means, typically for $M$ large, that either $I$ is finite with $|I|$
  sufficiently small, say $|I|=T\approx M^{-\gamma}$, or
  $I=[NT,\infty)$, with
  \begin{equation*}
    N \approx \frac{1}{T}M^{\frac{\gamma}{\gamma\delta(r_0)-1}}\approx
    M^{\gamma+\frac{\gamma}{\gamma\delta(r_0)-1}}.
  \end{equation*}
\end{remark}

\begin{proof}
 We divide the proof into three  parts, corresponding to
 \eqref{eq:lipschitz}, \eqref{eq:local-error-H2} and
 \eqref{eq:loc-stab-Sigma}, respectively. \\

\noindent \emph{Proof of \eqref{eq:lipschitz}.}
Let $\phi_1$ and $\phi_2 \in \Sigma$ such that $\|Z_\tau^{\phi_j}\|_
{X(\{a\tau\})}\le M$, $j=1,2$. We consider the difference between the
Duhamel formulas of $Z_{\tau}^{\phi_1}$ and $Z_{\tau}^{\phi_2}$
provided by \eqref{eq:discrete-duhamel}. For $A\in \{\Id,\nabla,  J\}$, \eqref{eq:discrete-duhamel} together with Corollary \ref{cor:stri-disc} yields
\begin{align*}
  &\quad \|A(n\tau)\(Z^{\phi_2}_\tau(n\tau) - Z^{\phi_1}_\tau(n\tau)
  \)\|_{\ell^q(I;L^r)}\\
  &\lesssim\left\|Z_\tau^{\phi_2}(a\tau)-Z_\tau^{\phi_1}(a\tau)\right\|_{L^2}+                           \left\|A(a\tau)\(Z_\tau^{\phi_2}(a\tau)-Z_\tau^{\phi_1}(a\tau)\)\right\|_{L^2} \\
  &\quad+\left\|\frac{N(\tau)-\Id}{\tau}Z^{\phi_2}_\tau -
  \frac{N(\tau)-\Id}{\tau}Z^{\phi_1}_\tau\right\|_{\ell^{q_0'}(I;L^{r_0'})}\\
  &\quad+
  \left\|A\(\frac{N(\tau)-\Id}{\tau}Z^{\phi_2}_\tau -
  \frac{N(\tau)-\Id}{\tau}Z^{\phi_1}_\tau\)\right\|_{\ell^{q_0'}(I;L^{r_0'})}.
\end{align*}
In view of Proposition~\ref{prop:local-stab} with choosing $K_2 >0$
small enough in \eqref{eq:K2}, for $(q,r) \in \{(q_0, r_0), (\infty,2)\}$, we have
    \begin{equation}\label{eq:borne-unif-phij}
    \|A(n\tau) Z_{\tau}^{\phi_j}(n \tau)\|_{\ell^q (I; L^{r} )}
    \le K M, \quad j=1,2,
  \end{equation}
  for some constant $K>0$. In view of this estimate,
  Lemma~\ref{lem:more1} implies, along with \eqref{eq:GNlibre} to
  estimate $\|\phi_2-\phi_1\|_{\ell^\gamma(I;L^{r_0})}$,
\begin{align*}
 Q_1&=\max_{A\in \{\Id,\nabla,J\}}  \|A(n\tau)\(Z^{\phi_2}_\tau(n\tau) - Z^{\phi_1}_\tau(n\tau)
  \)\|_{\ell^q(I;L^r)}\\
  &\lesssim \max_{A\in \{\Id,\nabla,J\}}\left\|A(a\tau)\(Z_\tau^{\phi_2}(a\tau)-Z_\tau^{\phi_1}(a\tau)\)\right\|_{L^2}\\
  &\quad+\max_{A\in \{\Id,\nabla,J\}}
 \left\|A\(\frac{N(\tau)-\Id}{\tau}Z^{\phi_2}_\tau -
  \frac{N(\tau)-\Id}{\tau}Z^{\phi_1}_\tau\)\right\|_{\ell^{q_0'}(I;L^{r_0'})}\\
    &\lesssim\max_{A\in \{\Id,\nabla,J\}}\left\|A(a\tau)\(Z_\tau^{\phi_2}(a\tau)-Z_\tau^{\phi_1}(a\tau)\)\right\|_{L^2} \\
  &+\(M \left\|\<n\tau\>^{-\delta(r_0)}\right\|_{\ell^\gamma(I)}\)^{2\si}\max_{A\in \{\Id,\nabla,J\}}\|A(Z^{\phi_2}_\tau(n\tau)
    - Z^{\phi_1}_\tau(n\tau))\|_{\ell^{q_0}(I;L^{r_0})}\\
  & +\(M
    \left\|\<n\tau\>^{-\delta(r_0)}\right\|_{\ell^\gamma(I)}\)^{2\si}
    \|Z^{\phi_2}_\tau(n\tau) - Z^{\phi_1}_\tau(n\tau)\|_{\Gamma(I)}\\
  & + \tau M
    \left\|\<n\tau\>^{-\delta(r_0)}\right\|_{\ell^\gamma(I)}
    \|Z^{\phi_2}_\tau(n\tau) -
    Z^{\phi_1}_\tau(n\tau)\|_{\Gamma(I)}\times\\
  &\qquad\times\(
    \|Z^{\phi_2}_\tau(n\tau)\|_{\ell^{\frac{4\si-1}{2\si-1}\gamma}(I;L^{\frac{4\si-1}{2\si-1}r_0})}^{4\si-1} +\|Z^{\phi_1}_\tau(n\tau)\|_{\ell^{\frac{4\si-1}{2\si-1}\gamma}(I;L^{\frac{4\si-1}{2\si-1}r_0})}^{4\si-1} \)
    ,
\end{align*}
where we recall that the norm $\|\cdot\|_{\Gamma(I)}$ is defined in
\eqref{eq:chi}.
In view of Bernstein inequality \eqref{eq-2-22}, for $j=1,2$,
\begin{equation*}
  \|Z^{\phi_j}_\tau(n\tau)\|_{L^{\frac{4\si-1}{2\si-1}r_0}}\lesssim
  \tau^{\frac{d}{2}\( \frac{2\si-1}{(4\si-1)r_0}- \frac{1}{r_0}\)}
  \|Z^{\phi_j}_\tau(n\tau)\|_{L^{r_0}},
\end{equation*}
and
\begin{align*}
  \frac{d}{2}\( \frac{2\si-1}{(4\si-1)r_0}- \frac{1}{r_0}\)&=
 -\frac{d}{2}\frac{2\si}{(4\si-1)r_0}= -\frac{d\si}{(4\si-1)(2\si+2)}=
  -\frac{2}{(4\si-1)q_0},
\end{align*}
so we obtain
\begin{equation}
  \label{eq:4sigma-1}
  \begin{aligned}
 \|Z^{\phi_j}_\tau(n\tau)\|_{\ell^{\frac{4\si-1}{2\si-1}\gamma}(I;L^{\frac{4\si-1}{2\si-1}r_0})}^{4\si-1}
  &\lesssim \tau^{-2/q_0}
    \|Z^{\phi_j}_\tau(n\tau)\|_{\ell^{\frac{4\si-1}{2\si-1}\gamma}(I;L^{r_0})}^{4\si-1}\\
  &\lesssim \tau^{-2/q_0}\|\<n\tau\>^{-\delta(r_0)}
    \|^{4\si-1}_{\ell^{\frac{4\si-1}{2\si-1}\gamma}(I)}
    \|Z^{\phi_j}_\tau\|_{\Gamma(I)}^{4\si-1}\\
  & \lesssim \tau^{-2/q_0}
    \|\<n\tau\>^{-\delta(r_0)} \|^{4\si-1}_{\ell^{\gamma}(I)}
    M^{4\si-1},
\end{aligned}
\end{equation}
where we have used \eqref{eq:borne-unif-phij} and
$\frac{4\si-1}{2\si-1}>1$ for the last
inequality. Note that if $\si=1/2$, the Lebesgue
  exponent $\frac{4\si-1}{2\si-1}\gamma$ becomes infinite, and we can
  no longer invoke \eqref{eq-2-22} like we did above. This is why we
  assume $\si>1/2$. In view of \eqref{eq:gamma}, we
get
\begin{align*}
  Q_1& \lesssim\max_{A\in \{\Id,\nabla,J\}}
   \left\|A(a\tau)\(Z_\tau^{\phi_2}(a\tau)-Z_\tau^{\phi_1}(a\tau)\)\right\|_{L^2} \\
  &\quad +\(M
    \left\|\<n\tau\>^{-\delta(r_0)}\right\|_{\ell^\gamma(I)}\)^{2\si}
    \max_{A\in \{\Id,\nabla,J\}}\|A(n\tau)(Z^{\phi_2}_\tau(n\tau)
    - Z^{\phi_1}_\tau(n\tau))\|_{\ell^{q_0}(I;L^{r_0})}\\
  &\quad +\(M
    \left\|\<n\tau\>^{-\delta(r_0)}\right\|_{\ell^\gamma(I)}\)^{2\si}
    \|Z^{\phi_2}_\tau(n\tau) - Z^{\phi_1}_\tau(n\tau)\|_{\Gamma(I)}\\
  &\quad + \( M
    \left\|\<n\tau\>^{-\delta(r_0)}\right\|_{\ell^\gamma(I)}\)^{4\si}
    \|Z^{\phi_2}_\tau(n\tau) -
    Z^{\phi_1}_\tau(n\tau)\|_{\Gamma(I)}\\
    &\le C_1\left\|Z_\tau^{\phi_2}(a\tau)-Z_\tau^{\phi_1}(a\tau)
    \right\|_{X(\{a\tau\})}\\
    &\quad +C_2 Q_1\left[\( M
    \left\|\<n\tau\>^{-\delta(r_0)}\right\|_{\ell^\gamma(I)}\)^{2\si}+
    \( M    \left\|\<n\tau\>^{-\delta(r_0)}\right\|_{\ell^\gamma(I)}\)^{4\si}\right].
\end{align*}
The inequality \eqref{eq:lipschitz} then follows in the same fashion
as in the proof of Proposition~\ref{prop:local-stab}, up to
considering a smaller constant $K_2$.
\bigbreak

\noindent\emph{Proof of \eqref{eq:local-error-H2}.}
Assume that $\psi \in \Sigma\cap H^2 (\R^d)$, with
$\|Z_\tau^\psi\|_{X(\{a\tau\})}\le M$, and let $A\in \{\Id,\nabla,J\}$.
We estimate $Z_{\tau}^{\psi}(n \tau) -
\Pi_{\tau} u^{\psi}(n \tau)$ instead of $Z_{\tau}^{\psi}(n \tau) -
u^{\psi}(n \tau)$, since we have
    \begin{align*}
    \left\| A(n\tau)\(u^{\psi}(n \tau) - \Pi_{\tau} u^{\psi}(n \tau)
      \)\right\|_{\ell^{q} (I; L^{r})}
    &\lesssim \tau^{1/2} \| A (n\tau)u^{\psi} (n \tau)\|_{\ell^{q} (I; W^{1,r})} \lesssim \tau^{1/2} ,
    \end{align*}
thanks to \eqref{eq-2-20}, \eqref{eq:cutoffJ} and
Theorem~\ref{theo:B}.
We now decompose $Z_{\tau}^{\psi}(n\tau) - \Pi_{\tau}u^{\psi} (n\tau)$ in
view of  Duhamel formulas \eqref{eq:discrete-duhamel} and
\eqref{eq:duhamel} (recalling that $S_\tau(t)= S(t)\Pi_\tau= \Pi_\tau S(t)$), as
    \begin{equation*}
    \begin{aligned}
    &\quad Z_{\tau}^{\psi}(n \tau) - \Pi_{\tau}u^{\psi} (n \tau) \\
    &=S_\tau((n-a)\tau)(Z_\tau(a\tau)-u^\psi(a\tau))\\
    &\quad+ \underbrace{\tau \sum_{k=a}^{n-1} S_{\tau}(n \tau -k \tau) \left( \frac{N(\tau) -\Id}{\tau} Z_{\tau}^{\psi}(k\tau)
        - \frac{N(\tau) -\Id}{\tau} \Pi_{\tau} u^{\psi}(k\tau)
      \right)}_{=: \mathcal G_1} \\
    &\qquad +\underbrace{\tau \sum_{k=a}^{n-1} S_{\tau}(n \tau -k \tau) \frac{N(\tau) -\Id}{\tau} \Pi_{\tau} u^{\psi}(k\tau)
        + i\int_{a\tau}^{n\tau} S_{\tau}(n\tau -s) |u^{\psi}|^{2\si}
        u^{\psi} (s) ds}_{=:\mathcal G_2}.
    \end{aligned}
  \end{equation*}
  In this decomposition, $\mathcal G_1$ must be thought of as an
  arbitrary small perturbation of the left hand side, and  $\mathcal
  G_2$ must be thought of as the actual source term.
  \smallbreak

Let $(q,r)$ be an admissible pair, and $A\in \{\Id,\nabla\}$. We estimate the terms of the above right hand side, after the action of
$A$. First, discrete Strichartz estimates \eqref{eq:disc-stri-inhom}
and Lemma~\ref{lem:more1} yield
\begin{align*}
  \|A\mathcal G_1\|_{\ell^qL^r}&\lesssim \left\|A\(\frac{N(\tau)
  -\Id}{\tau} Z_{\tau}^{\psi}(k\tau)  - \frac{N(\tau)
 -\Id}{\tau} \Pi_{\tau}  u^{\psi}(k\tau)\)\right\|_{\ell^{q_0'}L^{r_0'}}\\
  &\hspace{-6mm}\lesssim  \(\|Z^\psi_\tau\|_{\ell^{\gamma} L^{r_0}}^{2\si} +
\|\Pi_{\tau}u^\psi\|_{\ell^{\gamma}L^{r_0}}^{2\si} \)\| A(Z^\psi_\tau-\Pi_{\tau}u^\psi)\|_{\ell^{q_0}L^{r_0}} \\
   &\hspace{-6mm}+ \( \|Z^\psi_\tau\|_{\ell^\gamma L^{r_0}}^{2\si-1}+
      \|\Pi_{\tau}u^\psi\|_{\ell^\gamma L^{r_0}}^{2\si-1} \)
      \|Z^\psi_\tau-\Pi_{\tau}u^\psi\|_{\ell^\gamma L^{r_0}}\|A \Pi_{\tau}u^\psi\|_{\ell^{q_0} L^{r_0}} \\
   &\hspace{-6mm}+ \tau \|Z^\psi_\tau-\Pi_{\tau}u^\psi\|_{\ell^\gamma L^{r_0}}
     \|A \Pi_{\tau}u^\psi\|_{\ell^{q_0} L^{r_0}} \left\|  |Z^\psi_\tau|^{4\si-1}+
     |\Pi_{\tau}u^\psi|^{4\si-1}\right\|_{\ell^\frac{\gamma}{2\si-1}L^{\frac{r_0}{2\si-1}}},
\end{align*}
where the interval $I$ is omitted to ease notations.
Recalling Proposition~\ref{prop:local-stab} and
\eqref{eq:apriori-univ}, we infer
\begin{equation*}
   \max_{B\in \{\Id,\nabla,J\}} \sum_{(q,r)\in
      \{(q_0,r_0),(\infty,2)\}}\| B Z^\psi_{\tau}\|_{\ell^q(I;L^r)}\lesssim M.
\end{equation*}
Invoking \eqref{eq:4sigma-1},
\begin{equation*}
  \||Z^{\psi}_\tau|^{4\si-1}\|_{\ell^{\frac{\gamma}{2\si-1}}(I;L^{\frac{r_0}{2\si-1}})}
  \lesssim \tau^{-\frac{2}{q_0}}
    \|\<n\tau\>^{-\delta(r_0)} \|^{4\si-1}_{\ell^{\gamma}(I)}
    M^{4\si-1}.
\end{equation*}
In view of Theorem~\ref{theo:B}, \eqref{eq:cutoffJ} and \eqref{eq-2-20'}, we also have
\begin{equation*}
   \max_{B\in \{\Id,\nabla,J\}} \sum_{(q,r)\in
      \{(q_0,r_0),(\infty,2)\}}\| B \Pi_{\tau}u^\psi\|_{\ell^q(I;L^r)}\lesssim M,
\end{equation*}
and, like for \eqref{eq:4sigma-1},
\begin{equation*}
   \||\Pi_\tau u^\psi|^{4\si-1}\|_{\ell^{\frac{\gamma}{2\si-1}}(I;L^{\frac{r_0}{2\si-1}})}
  \lesssim \tau^{-\frac{2}{q_0}}
    \|\<n\tau\>^{-\delta(r_0)} \|^{4\si-1}_{\ell^{\gamma}(I)}
    M^{4\si-1}.
  \end{equation*}
  We deduce the (uniform in $\tau\in (0,1)$) estimate
\begin{align*}
  \|A\mathcal G_1\|_{\ell^q(I;L^r)}
  &\lesssim
    \( \left\|\<n\tau\>^{-\delta(r_0)}\right\|_{\ell^\gamma(I)}M\)^{2\si}
    \| A(Z^\psi_\tau-\Pi_{\tau}u^\psi)\|_{\ell^{q_0}(I;L^{r_0})} \\
  &\quad + \( \left\|\<n\tau\>^{-\delta(r_0)}\right\|_{\ell^\gamma(I)}M\)^{2\si-1}M
    \|Z^\psi_\tau-\Pi_{\tau}u^\psi\|_{\ell^\gamma (I;L^{r_0})}\\
  &\quad + \( \left\|\<n\tau\>^{-\delta(r_0)}\right\|_{\ell^\gamma(I)}M\)^{4\si-1}M
    \|Z^\psi_\tau-\Pi_{\tau}u^\psi\|_{\ell^\gamma(I; L^{r_0})}.
  \end{align*}
  As before, we can estimate
  \[
  \|Z^\psi_\tau-\Pi_{\tau}u^\psi\|_{\ell^\gamma (I;L^{r_0})}\lesssim
  \left\|\<n\tau\>^{-\delta(r_0)}\right\|_{\ell^\gamma(I)}\|Z^\psi_\tau-\Pi_{\tau}u^\psi\|_{\Gamma(I)}\\
\lesssim Q_2 \left\|\<n\tau\>^{-\delta(r_0)}\right\|_{\ell^\gamma(I)}.\]
When $A=J$, we recall that in view of Corollary~\ref{cor:stri-disc},
we may repeat the same computations, up to an (irrelevant) extra multiplicative
factor $1+C\tau^{1/2}$.
Picking $\|\<n\tau\>^{-\delta(r_0)}
 \|_{\ell^{\gamma}(I)}M$ sufficiently small,
 \begin{equation*}
 \max_{{A \in\{\Id,\nabla,J\}}\atop{(q,r) \,\, \mathrm{admissible}}} \|A\mathcal G_1\|_{\ell^q(I;L^r)}
   \le \frac{1}{2}Q_2 ,
\end{equation*}
and thus
\begin{equation*}
  \max_{\atop{(q,r)\,\,\mathrm{admissible}}} Q_2\le C\|Z_\tau(a\tau)-u^\psi(a\tau)\|_{L^2}+2\max_{{A \in\{\Id,\nabla,J\}}\atop{(q,r)\,\,\mathrm{admissible}
      }} \|A\mathcal G_2 \|_{\ell^q(I;L^r)} .
\end{equation*}
By adapting of the proof of Theorem~\ref{theo:B}, we see
that
\begin{equation*}
  \max_{{A \in\{\Id,\nabla,J\}}\atop{(q,r)\,\,\mathrm{admissible}}} \|A\mathcal G_2
  \|_{\ell^q(I;L^r)}\le C(d,\si,M, \psi) \tau^{1/2},
\end{equation*}
hence \eqref{eq:local-error-H2} is established by similar induction applied in the proof of Lemma~\ref{lem:A4}.
\bigbreak

\noindent\emph{Proof of \eqref{eq:loc-stab-Sigma}.}
Let $\eps\in (0,M/2)$, and $\psi\in H^2\cap \Sigma$ such that
\begin{equation*}
 \|\phi-\psi\|_{X(\{a\tau\})}
 \le \eps.
\end{equation*}
Then $\|\psi\|_{X(\{a\tau\})}\le M$, so we may invoke
Theorem~\ref{theo:B} and \eqref{eq:lipschitz} to claim that
\begin{equation*}
  \|u^\phi-u^\psi\|_{Y(\R)}\le C\|\phi-\psi\|_{X(\{a\tau\})}\lesssim \eps,
\end{equation*}
and, since $u^\phi(a\tau)=Z^\phi_\tau(a\tau)=\phi$,
\begin{equation*}
  \max_{A \in\{\Id,\nabla,J\}}\|A(n\tau)\(Z^\phi_\tau(n\tau)-
  Z^\psi_\tau(n\tau)\)\|_{\ell^\infty (I;L^2)}\le
  C\|\phi-\psi\|_{X(\{a\tau\})}\lesssim \eps.
\end{equation*}
On the other hand, \eqref{eq:local-error-H2} yields
\begin{equation*}
  \max_{A \in\{\Id,\nabla,J\}}\|A(n\tau)\(Z^\psi_\tau(n\tau)-
  u^\psi(n\tau)\)\|_{\ell^\infty (I;L^2)}\le C(d,\si,M,\psi)\tau^{1/2},
\end{equation*}
and the triangle inequality implies
\begin{equation*}
   \max_{A \in\{\Id,\nabla,J\}}\|A(n\tau)\(Z^\phi_\tau(n\tau)-
  u^\phi(n\tau)\)\|_{\ell^\infty (I;L^2)}\le
  C(d,\si,M,\psi)\tau^{1/2}+ C(d,\si,M)\eps.
\end{equation*}
Therefore, for all $\eps\in (0,M/2)$,
\begin{equation*}
  \limsup_{\tau\to 0} \max_{A \in\{\Id,\nabla,J\}}\|A(n\tau)\(Z^\phi_\tau(n\tau)-
  u^\phi(n\tau)\)\|_{\ell^\infty (I;L^2)}\le C(d,\si,M)\eps,
\end{equation*}
and the left hand side must then be zero.
\end{proof}


\section{Global $\Sigma$ stability of $Z_{\tau}$}\label{sec:stability}

In this section, we prove Theorem~\ref{theo:stability}. As suggested
by the statements of Propositions~\ref{prop:local-stab} and
\ref{prop:discrete-stability}, the idea is to split $\N$ into finitely
many intervals, like in the proof of Theorem~\ref{theo:CVL2}, and
apply these local results on each of them: the
accumulation of errors is thus limited.

\begin{proof}[Proof of Theorem \ref{theo:stability}]
  Let $\phi \in \Sigma$: in view of Theorem~\ref{theo:aprioriX}, there
  exists $M$ such that
  \begin{equation*}
    \sum_{A\in \{\Id,\nabla,J\}} \|Au^\phi\|_{L^\infty(\R;L^2)}
\le \frac{M}{2}.
  \end{equation*}
 Consider $K_2$ provided by Proposition~\ref{prop:discrete-stability},
 see \eqref{eq:K2}. Like in the proof of
  Theorem~\ref{theo:CVL2}, we can find a finite number
$K$ of time intervals $I_j= \overline{[m_j\tau,m_{j+1}\tau)}$
with $0=m_1<m_2<\ldots<m_K\in \N$, $m_{K+1}=\infty$, such that
\begin{equation*}
   M\left\|\<n\tau\>^{-\delta(r_0)}\right\|_{L^\gamma(I_j)}
   \le K_2,\quad \R_+ = \bigcup_{j=1}^K I_j.
 \end{equation*}
  Typically, in view of Remarks~\ref{rem:series} and
  \ref{rem:series2}, this means that for $1\le j\le K-1$, we may
  consider $|I_j|= T$ sufficiently small (in terms of $M$, and
  uniformly in $\tau$), while $I_{K}$ is of the form $[\alpha T,\infty)$,
  with $\alpha$ sufficiently large.
  \smallbreak

For each $1\le j\le K$, let $\psi_j \in \Sigma\cap H^2 $ such that
    \begin{equation}\label{eq:psij}
    \| \psi_j - u^{\phi}(m_j\tau) \|_{X(\{m_j\tau\})} \le \frac{M}{(10 C_0)^{K}} ,
  \end{equation}
  with $C_0$ the largest constant between $1$, the constant $C$ in the first
  point of Theorem~\ref{theo:B}, and the constant $C$ in
  \eqref{eq:lipschitz}. We show by induction that for $\tau>0$
  sufficiently small,
  \begin{equation}\label{eq:induction1}
    \max_{A\in
      \{\Id,\nabla,J\}}\|A(n\tau)\(Z^\phi_\tau(n\tau)-
    u^\phi(n\tau)\)\|_{\ell^\infty(I_j;L^2)}\le
    (3C_0)^j  \frac{M}{(10 C_0)^{K}} ,
  \end{equation}
  and
  \begin{equation}\label{eq:induction2}
     \max_{A\in
      \{\Id,\nabla,J\}}\|A(n\tau)Z^\phi_\tau(n\tau)\|_{\ell^\infty(I_j;L^2)} \le M.
  \end{equation}
Let $A\in \{\Id,\nabla,J\}$. For $j=1$, we use triangle inequality to observe
\begin{align*}
  \|A\(Z^\phi_\tau(n\tau)-u^\phi(n\tau)\)\|_{\ell^\infty(I_1;L^2)}&\le
\|A\(Z^\phi_\tau(n\tau)-Z^{\psi_1}_\tau(n\tau)\)\|_{\ell^\infty(I_1;L^2)}\\
 &\quad +
\|A\(Z^{\psi_1}_\tau(n\tau)-u^{\psi_1}(n\tau)\)\|_{\ell^\infty(I_1;L^2)}
  \\
  &\quad+
  \|A\(u^{\psi_1}(n\tau)-u^\phi(n\tau)\)\|_{\ell^\infty(I_1;L^2)}.
\end{align*}
In view of \eqref{eq:lipschitz}, the first term in the right hand side
is controlled by
\begin{equation*}
  \|A\(Z^\phi_\tau(n\tau)-Z^{\psi_1}_\tau(n\tau)\)\|_{\ell^\infty(I_1;L^2)}
  \le C_0 \|\phi-\psi_1\|_{X(\{0\})}\le C_0\frac{M}{(10C_0)^{K}} ,
\end{equation*}
where we have used \eqref{eq:psij}. The second term is estimated in
view of \eqref{eq:local-error-H2}, by
\begin{equation*}
  \|A\(Z^{\psi_1}_\tau(n\tau)-u^{\psi_1}(n\tau)\)\|_{\ell^\infty(I_1;L^2)}\le
  \tau^{1/2}C(d,\si,M,\psi_1).
\end{equation*}
For $0<\tau\le \tau_1=\tau_1 (d,\si,M,\psi_1)$, we infer
\begin{equation*}
  \|A\(Z^{\psi_1}_\tau(n\tau)-u^{\psi_1}(n\tau)\)\|_{\ell^\infty(I_1;L^2)}\le
  C_0\frac{M}{(10 C_0)^{K}} .
\end{equation*}
We impose a smallness constraint on each $I_j$, but since there are
finitely many such intervals, the minimum of these $\tau_j$ is indeed
positive.
Finally, Theorem~\ref{theo:B} and \eqref{eq:psij} yield
\begin{equation*}
   \|A\(u^{\psi_1}(n\tau)-u^\phi(n\tau)\)\|_{\ell^\infty(I_1;L^2)}\le
   C_0\|\psi_1-\phi\|_{X(\{0\})}\le C_0\frac{M}{(10 C_0)^{K}} ,
\end{equation*}
hence \eqref{eq:induction1} for $j=1$. To prove \eqref{eq:induction2}
for $j=1$, write
\begin{align*}
  \|A(n\tau)Z^\phi_\tau(n\tau)\|_{\ell^\infty(I_1;L^2)} &\le
\|A(n\tau)u^\phi(n\tau)\|_{\ell^\infty(I_1;L^2)}\\
&\quad +\|A(n\tau)\(Z^\phi_\tau(n\tau)-u^\phi(n\tau)\)\|_{\ell^\infty(I_1;L^2)}\\
 &\le \frac{M}{2} + 3C_0\frac{M}{(10 C_0)^{ K}}\le M.
\end{align*}
Suppose now that \eqref{eq:induction1} and \eqref{eq:induction2} hold
for some $1\le j\le K-1$.
We invoke the same intermediary results as
in the case $j=1$:
\begin{align*}
  &\quad\|A\(Z^\phi_\tau(n\tau)-u^\phi(n\tau)\)\|_{\ell^\infty(I_{j+1};L^2)}\\
  &\le
\|A\(Z^\phi_\tau(n\tau)-Z^{\psi_{j+1}}_\tau(n\tau-m_{j+1}\tau)\)\|_{\ell^\infty(I_{j+1};L^2)}\\
 &\quad+
\|A\(Z^{\psi_{j+1}}_\tau(n\tau-m_{j+1}\tau)-u^{\psi_{j+1}}(n\tau-m_{j+1}\tau)\)\|_{\ell^\infty(I_{j+1};L^2)}
  \\
  &\quad+
  \|A\(u^{\psi_{j+1}}(n\tau-m_{j+1}\tau)-u^\phi(n\tau)\)\|_{\ell^\infty(I_{j+1};L^2)}.
\end{align*}
For the first term on the right hand side, in view of
\eqref{eq:induction2} at step $j$, we have:
\begin{align*}
  &\quad\|A(Z^\phi_\tau(n\tau)-Z^{\psi_{j+1}}_\tau(n\tau-
  m_{j+1}\tau))\|_{\ell^\infty(I_{j+1};L^2)}\\
                &\le C_0
 \|Z^\phi_\tau(m_{j+1}\tau)-\psi_{j+1}\|_{X(\{m_{j+1}\tau\})} \\
  &\le C_0\|Z^\phi_\tau(m_{j+1}\tau)-u^\phi(m_{j+1}\tau)\|_{X(\{m_{j+1}\tau\})}
  + C_0\|u^\phi(m_{j+1}\tau)-\psi_{j+1}\|_{X(\{m_{j+1}\tau\})}\\
  &\le C_0\max_{B\in
    \{\Id,\nabla,J\}}\|B(Z^\phi_\tau(n\tau)-u^\phi(n\tau))\|_{\ell^\infty(I_j;L^2)}
  + C_0 \frac{M}{(10 C_0)^{ K}}\\
  &\le C_0 \times (3C_0)^j  \frac{M}{(10 C_0)^{K}}+ C_0
    \frac{M}{(10 C_0)^{ K}},
\end{align*}
where we have used \eqref{eq:induction1} at step $j$, and
\eqref{eq:psij}. The second term is estimated in
view of \eqref{eq:local-error-H2}, by
\begin{equation*}
  \|A\(Z^{\psi_{j+1}}_\tau(n\tau-m_{j+1}\tau)-u^{\psi_{j+1}}(n\tau-m_{j+1}\tau)\)\|_{\ell^\infty(I_{j+1};L^2)} \le
  \tau^{1/2}C(d,\si,M,\psi_{j+1}).
\end{equation*}
For $0<\tau\le \tau_{j+1}=\tau_{j+1} (d,\si,M,\psi_{j+1})$, we infer
\begin{equation*}
 \|A\(Z^{\psi_{j+1}}_\tau(n\tau-m_{j+1}\tau)-u^{\psi_{j+1}}(n\tau-m_{j+1}\tau)\)\|_{\ell^\infty(I_{j+1};L^2)}  \le
  C_0\frac{M}{(10 C_0)^{K}} .
\end{equation*}
Eventually, we assume
\begin{equation*}
  0<\tau\le \min_{1\le j\le K}\tau_j.
\end{equation*}
Finally, Theorem~\ref{theo:B} and \eqref{eq:psij} yield
\begin{align*}
\|A\(u^{\psi_{j+1}}(n\tau-m_{j+1}\tau)-u^\phi(n\tau)\)\|_{\ell^\infty(I_{j+1};L^2)} &  \le
 C_0\|\psi_{j+1}-u^\phi(m_{j+1}\tau)\|_{X(\{m_{j+1}\tau\})}\\
&\le
   C_0\frac{M}{(10 C_0)^{K}} ,
\end{align*}
hence
\begin{align*}
  \|A\(Z^\phi_\tau(n\tau)-u^\phi(n\tau)\)\|_{\ell^\infty(I_{j+1};L^2)}&\le
  \(3^jC_0^{j+1} + 3C_0\)   \frac{M}{(10 C_0)^{K}}\\
  &\le \(3^jC_0^{j+1}
 + 3C_0^{j+1}\)\frac{M}{(10 C_0)^{K}},
\end{align*}
and \eqref{eq:induction1} follows for $j+1$. To prove
\eqref{eq:induction2}, write again
\begin{align*}
  \|A(n\tau)Z^\phi_\tau(n\tau)\|_{\ell^\infty(I_{j+1};L^2)} &\le
\|A(n\tau)u^\phi(n\tau)\|_{\ell^\infty(I_{j+1};L^2)}\\
&\quad +\|A(n\tau)\(Z^\phi_\tau(n\tau)-u^\phi(n\tau)\)\|_{\ell^\infty(I_{j+1};L^2)}\\
 &\le \frac{M}{2} + (3C_0)^{j+1}\frac{M}{(10 C_0)^{ K}}\le M.
\end{align*}
This yields Theorem~\ref{theo:stability} for $(q,r)=(\infty,2)$. The
case of other admissible pairs then follows from
Proposition~\ref{prop:local-stab}.
\end{proof}

\section{Conclusion and outlooks}
\label{sec:conclusion}

In this paper, we have proved the first uniform in time error estimate
for splitting methods in the context of nonlinear Schr\"odinger
equations. The main tools are discrete Strichartz estimates for the
frequency truncated free Schr\"odinger group established in
\cite{IgnatZuazua2009} on the one hand, the Galilean operator $J$
and its main properties discovered in \cite{GV79Scatt} on the other
hand. The Galilean operator provides precious dispersive estimates, in
the sense that $L^r$-norms in space decay with an explicit rate in
time: this is more precise than belonging to some space $L^q_tL^r_x$,
a property related to Strichartz estimates. We have considered
Lie-Trotter splitting in time, the question of higher order methods,
starting with Strang splitting, remains open.
\smallbreak

For several technical reasons (see Remark~\ref{rem:restriction}), we
have assumed $\si\ge 2/d$. Filling the gap $\si_*<\si<2/d$ does not
seem straightforward, and is a fairly natural question.
\smallbreak

It is very likely that the method presented here can be adapted in
order to treat some (defocusing) Hartree-type nonlinearities, of the form
\begin{equation*}
  \(\frac{1}{|x|^{\gamma}}\ast|u|^2\)u,
\end{equation*}
with $4/3<\gamma<\min (4,d)$; see \cite{GV80,HT87R3} for scattering theory
in this case.
\smallbreak

As it is the case in several physical models, such as Bose-Einstein
condensation (see e.g. \cite{JosserandPomeau}), one may ask if our
result remains true if \eqref{eq:NLS} is replaced by
\begin{equation*}
   i \partial_t u +\frac{1}{2} \Delta u =  |u|^{2\si}
   u+\frac{|x|^2}{2}u.
\end{equation*}
As the harmonic potential prevents large time dispersion (linear
solutions are periodic in time), no uniform in time error estimate
should be expected to the splitting method. More generally, if the
solution to the Schr\"odinger equation that we consider is not
(sufficiently) dispersive, then our approach fails, as discussed in
Remark~\ref{rem:non-disp}.
On the other hand, if one
considers the \emph{repulsive} harmonic potential,
\begin{equation*}
   i \partial_t u +\frac{1}{2} \Delta u =  |u|^{2\si}
   u-\frac{|x|^2}{2}u,
 \end{equation*}
 which enhances the linear dispersion (algebraic decay in time becomes
 an exponential decay),
 then the scattering theory developed in \cite{CaSIMA} suggests that an
 analogue of Theorem~\ref{theo:CVL2} may be available.
\smallbreak

In the case of linear Schr\"odinger equations with a potential,
\begin{equation*}
  i\d_t u+\frac{1}{2}\Delta u= Vu,
\end{equation*}
if  $V$ is a short-range potential in the sense that
scattering theory is available (see e.g. \cite{DG} for a reference
including time-dependent potentials), it seems that no uniform in time
error estimate is known for splitting methods. The case of short-range
potentials should be a reasonable framework to try to prove the
analogue of Theorem~\ref{theo:CVL2}, since solutions are
asymptotically free (the action of the potential $V$ is negligible for
large time), but a suitable technical approach is to be developed.
\smallbreak

Probably the most natural and challenging question at this stage would
be to adapt our result to the fully discrete case, that is, taking
spatial discretization into account. The first remark is that fully
discrete Strichartz estimates have been proven in
\cite{IgnatZuazua2009}. The most important aspect to address is
therefore the adaptation of the Galilean operator
$J(t)=x+it\nabla$. The two operators involved in the definition of $J$,
the multiplication by $x$ and the gradient in space, are easily
adapted to the discrete case, but the analogue of
Proposition~\ref{prop:pseudoconf} is certainly the key step to
understand, in order to treat the fully discrete case.


\bibliographystyle{abbrv}
\bibliography{biblio}
\end{document}